\numberwithin{figure}{section}
\newcommand{\HEADER}[1]{\ALC@it\underline{\textsc{#1}}\begin{ALC@g}}
\newcommand{\ENDHEADER}{\end{ALC@g}}
\definecolor{Silver}{rgb}{0.752,0.752,0.752}
\titleclass{\subsubsubsection}{straight}[\subsection]
\newcounter{subsubsubsection}[subsubsection]
\renewcommand\thesubsubsubsection{\thesubsubsection.\arabic{subsubsubsection}}
\renewcommand\paragraph{\@startsection{paragraph}{5}{\z@}%
  {3.25ex \@plus1ex \@minus.2ex}%
  {-1em}%
  {\normalfont\normalsize\bfseries}}
\renewcommand\subparagraph{\@startsection{subparagraph}{6}{\parindent}%
  {3.25ex \@plus1ex \@minus .2ex}%
  {-1em}%
  {\normalfont\normalsize\bfseries}}
\def\toclevel@subsubsubsection{4}
\def\toclevel@paragraph{5}
\def\toclevel@paragraph{6}
\def\l@subsubsubsection{\@dottedtocline{4}{7em}{4em}}
\def\l@paragraph{\@dottedtocline{5}{10em}{5em}}
\def\l@subparagraph{\@dottedtocline{6}{14em}{6em}}
\newcommand{\comentario}[1]{\textcolor{red}{\textbf{[Comentario: #1]}}}
\newtheorem{definition}{Definition}[section]
\newtheorem{theorem}{Theorem}[section]
\newtheorem{proposition}{Proposition}[section]
\newtheorem{corollary}{Corollary}[section]
\newtheorem{lemma}{Lemma}[section]
\newtheorem{assumption}{Assumption}[section]
\theoremstyle{definition}
\declaretheorem[style=definition,name=Example,qed=$\diamond$,numberwithin=section]{example}
\title{\textbf{ Scenario-based Regularization: A Tractable Framework for Distributionally Robust Stochastic Optimization} }
\author{
\begin{tabular}{cc}
     \textbf{Diego Fonseca} & \textbf{Mauricio Junca}\\
     \texttt{dffonsecav\MVAt eafit.edu.co} & \texttt{mj.junca20\MVAt uniandes.edu.co} \\
     School of Applied Sciences and Engineering & Department of Mathematics\\
     Universidad EAFIT & Universidad de los Andes \\
     Medellín, Colombia & Bogotá, Colombia
\end{tabular}
}
\date{}
\providecommand{\keywords}[1]{\\ \textbf{Kywords: } #1}
\begin{document}
\maketitle

\begin{abstract}
\sloppy We propose a flexible scenario-based regularized Sample Average Approximation (SBR-SAA) framework for stochastic optimization. This work is motivated by challenges in standard Wasserstein Distributionally Robust Optimization (WDRO), where out-of-sample performance, particularly tail risk, is sensitive to the choice of the $p$-norm, and formulations can be computationally intractable. Our method is inspired by the asymptotic expansion of the WDRO objective and introduces a regularizer that penalizes the (sub)gradient norm of the objective at a selected set of scenarios. This framework serves a dual purpose: (i) it provides a computationally tractable alternative to WDRO by using a representative subset of the data, and (ii) it can provide targeted robustness by incorporating user-defined adverse scenarios. We establish the theoretical properties of this framework by proving its equivalence to a decision-dependent WDRO problem, from which we derive finite sample guarantees and asymptotic consistency. We demonstrate the method's efficacy in two applications: (1) a multi-product newsvendor problem, where SBR-SAA serves as a tractable alternative to NP-hard WDRO, and (2) a mean-risk portfolio optimization problem, where it successfully uses historical crisis data to improve out-of-sample performance.
    \keywords{Distributionally Robust Optimization, Wasserstein metric, Conditional Value at Risk.}
\end{abstract}


\section{Introduction} \label{Sec:Introduccion}

Stochastic optimization is central to decision-making under uncertainty in fields such as finance, energy, engineering design, operations research, and machine learning; see, e.g., \citep{Miller1965,Dentcheva2003,Ziemba2006,ruszczynski2003stochastic,Rigollet2011,LanBook2020}. A standard formulation is
\begin{equation} \label{eqn:SP_intro}
\min_{x\in\mathcal{X}}\mathbb{E}_{\xi\sim\mathbb{P}}[F(x,\xi)],
\end{equation}
where $x$ belongs to a feasible set $\mathcal{X}$, $\xi$ is a random vector with distribution $\mathbb{P}$ supported on $\Xi\subseteq\mathbb{R}^{d}$, and $F$ denotes the (possibly constrained) cost or loss function. In practice, the true distribution $\mathbb{P}$ is unknown, and decisions must be constructed from finite data. Given a sample $\hat{\Xi}_{n}=\{\hat{\xi}_{1},\ldots,\hat{\xi}_{n}\}$ of independent realizations from $\mathbb{P}$, several data-driven paradigms have been proposed. Among the most prominent are Sample Average Approximation (SAA), which replaces the expectation in \eqref{eqn:SP_intro} with the empirical mean, and Distributionally Robust Optimization (DRO), which optimizes with respect to the worst-case distribution in an ambiguity set calibrated from data. In particular, Wasserstein DRO (WDRO), where the ambiguity set is a Wasserstein ball around the empirical measure, has attracted considerable attention due to its theoretical guarantees and empirical performance \citep{Gao2016,MohajerinEsfahani2018,Blanchet2019}.

Optimizing \eqref{eqn:SP_intro} is inherently risk-neutral: the expectation aggregates outcomes linearly, allowing extreme costs or losses to be offset by more favorable realizations. As emphasized in \cite{royset2025risk}, such a criterion can yield decisions with substantial variability or vulnerability to rare but severe events, even if the expected cost is small. In classical portfolio theory, this tension is evident in the Markowitz model, where the introduction of variance as an explicit penalty responds to the inadequacy of optimizing expected return alone \citep{Markowitz1952}. More generally, controlling variability and tail behavior typically requires explicit risk-sensitive components, either through constraints or through composite mean-risk objectives \citep{Fabozzi2010RobustPortfolios,royset2025risk}.

A widely used approach is to consider mean-risk formulations involving the Conditional Value-at-Risk (CVaR). In a generic setting, one may consider
\begin{equation}\label{eqn:MeanRiskEmpirico_intro}
\min_{x\in\mathcal{X}} \left( \mathbb{E}_{\xi\sim\mathbb{P}}[F(x,\xi)]+\varepsilon \mathrm{CVaR}_{\alpha,\xi\sim\mathbb{P}}(F(x,\xi)) \right),
\end{equation}
where $\varepsilon\ge 0$ controls the trade-off between mean performance and tail risk, and $\mathrm{CVaR}_{\alpha}$ focuses on losses beyond a quantile level $\alpha$ \citep{Rockafellar2000}. While such models are conceptually appealing and widely adopted in finance and beyond \citep{Fabozzi2010RobustPortfolios,royset2025risk}, they face important practical difficulties. Reliable estimation of CVaR is demanding: its dependence on extreme quantiles makes it statistically unstable in small or moderate samples, particularly under heavy tails or model misspecification \citep{Hu19122024,royset2025risk}. Embedding CVaR into an optimization problem can be challenging for general (possibly non-smooth or non-convex) functions $F$ \citep{Meng2010}, and in portfolio applications, the resulting estimators can be fragile, amplifying errors in both means and tails \citep{LIM2011163}. Thus, while CVaR-based formulations directly encode tail aversion, their empirical robustness is delicate.

These limitations motivate a more precise view of robustness in data-driven stochastic optimization. In this paper, we emphasize two complementary notions:
\begin{itemize}
    \item \sloppy\emph{Distributional robustness of average performance}: decisions $\hat{x}$ should exhibit stable out-of-sample expected costs $\mathbb{E}_{\xi\sim\mathbb{P}}[F(\hat{x},\xi)]$ across different samples drawn from $\mathbb{P}$.
    \item \sloppy\emph{Robustness against extreme outcomes}: decisions should mitigate the impact of low-probability, high-cost realizations, which can be quantified, for instance, via out-of-sample CVaR or related tail-sensitive measures.
\end{itemize}
A method that only optimizes the empirical mean may fail on both fronts; a method that directly targets tail functionals may suffer from statistical or computational instability. Designing procedures that navigate this trade-off in a principled and tractable manner is a central challenge.

DRO provides a systematic way to address distributional uncertainty by optimizing under the worst-case distribution within an ambiguity set $\mathcal{D}$:
\begin{equation}\label{DROGeneral_intro}
J_{\mathcal{D}}:=\min_{x\in\mathcal{X}}\sup_{\mathbb{Q}\in\mathcal{D}}\mathbb{E}_{\xi\sim\mathbb{Q}}[F(x,\xi)].
\end{equation}
When $\mathcal{D}$ is chosen such that $\mathbb{P}\in\mathcal{D}$ with high confidence, $J_{\mathcal{D}}$ upper bounds the true optimal value $J$ of \eqref{eqn:SP_intro}, and the solutions of \eqref{DROGeneral_intro} enjoy finite-sample performance guarantees. Many constructions for $\mathcal{D}$ have been proposed, including moment-based sets \citep{Scarf1958,Shapiro2002,Popescu2007,Delage2010} and others built from discrete supports or structural information \citep{Lagoa2002,Shapiro2006}. A different line defines $\mathcal{D}$ via statistical divergences or probability metrics around the empirical distribution $\widehat{\mathbb{P}}_{n}$, such as Burg entropy \citep{Wang2015}, Kullback--Leibler divergence \citep{Jiang2015}, Total Variation distance \citep{Sun2015}, or Wasserstein distances \citep{Gao2016,MohajerinEsfahani2018,Blanchet2019}. Wasserstein DRO is particularly attractive because of its geometric structure, its ability to capture distributional shifts in support, and tractable reformulations in many relevant settings; see the survey \citep{KuhnShafieeWiesemannReview2024}.

However, WDRO is not a monolithic object: its behavior depends critically on the order $p$ of the Wasserstein distance, $W_{p}$, the choice of the ground metric, and the ambiguity radius. These modeling choices affect both tractability and the induced out-of-sample risk profile \citep{KuhnShafieeWiesemannReview2024}. While WDRO often improves stability of the expected cost and helps mitigate overfitting, there is no general equivalence between Wasserstein DRO and classical mean-risk formulations of the form \eqref{eqn:MeanRiskEmpirico_intro}. Moreover, for some problem structures, WDRO may effectively reduce to SAA and fail to deliver additional robustness, and for others, tractable reformulations can become computationally demanding or intractable (see Section \ref{Lipproblem}).

To illustrate the sensitivity of WDRO to the metric choice and its implications for tail behavior, we consider the following example.

\begin{example}[Motivating example: the role of the Wasserstein order] \label{MotivationalExample}
We construct a one-dimensional feasible set $\mathcal{X}=[\beta,\beta+1]$ with $\beta>0$ and a loss function $F(x,\xi)$ such that:
\begin{enumerate}
    \item the underlying stochastic program \eqref{eqn:SP_intro} has two optimal solutions located at the endpoints $x=\beta$ and $x=\beta+1$,
    \item for sufficiently large ambiguity radii, the WDRO solutions converge to the optimizer of an associated robust optimization problem, which, by design, lies strictly inside $(\beta,\beta+1)$ \citep[cf.][]{ben2009robust}, and
    \item for intermediate radii, 1-Wasserstein DRO (1-WDRO) and 2-Wasserstein DRO (2-WDRO) tend to select different solutions close to distinct true optima of \eqref{eqn:SP_intro}.
\end{enumerate}
The detailed functional form of $F$, the calibration of parameters, and the full derivations are deferred to Appendix~\ref{Appendix:MotivatingExample}. Here, we highlight the out-of-sample implications that are most relevant for our discussion.

We first compare the out-of-sample distributions of $F(\hat{x}_{1},\xi)$ and $F(\hat{x}_{2},\xi)$, where $\hat{x}_{1}$ and $\hat{x}_{2}$ are decisions obtained from 1-WDRO and 2-WDRO, respectively, using a single training sample and then evaluated on an independent large test sample.

\begin{figure}[t]
\centering
 \begin{tabular}{cc}
  \includegraphics[scale=0.46]{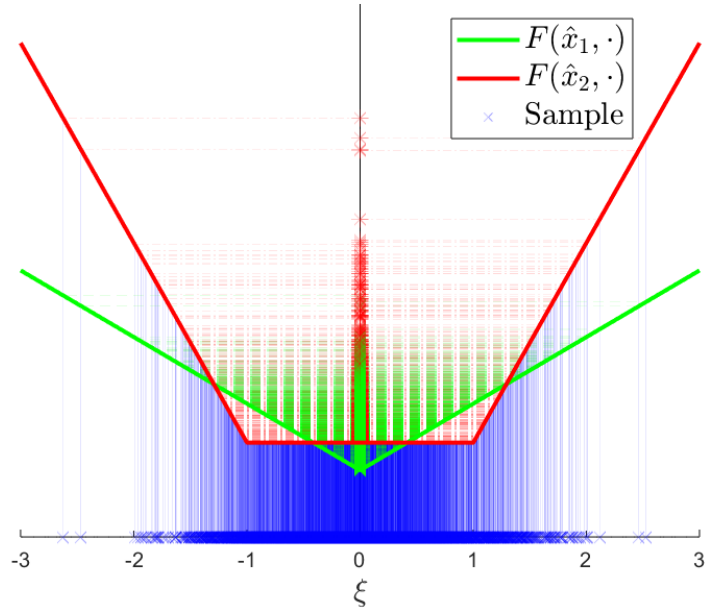} & \includegraphics[scale=0.45]{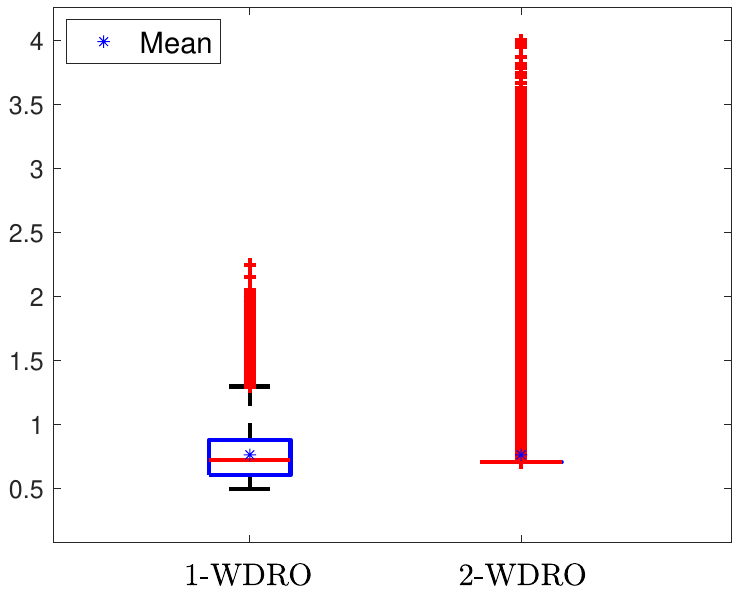} \\
  (a) Scatter plot & (b) Boxplots
 \end{tabular}
\caption{Out-of-sample performance of $F(\hat{x}_{1},\xi)$ (1-WDRO) and $F(\hat{x}_{2},\xi)$ (2-WDRO) for a fixed training sample. Panel (a) shows individual out-of-sample evaluations; panel (b) compares their boxplots. Both methods display similar average performance, but 2-WDRO exhibits noticeably larger extreme values.}\label{fig:OutOfSampleParticularEjeploMotivac_intro}
\end{figure}

Figure~\ref{fig:OutOfSampleParticularEjeploMotivac_intro} shows that, for this configuration, both approaches achieve comparable mean performance, as expected. However, the spread in the upper tail is different: the 2-WDRO solution allows for substantially larger extreme losses than the 1-WDRO solution. Thus, two WDRO formulations with similar expected values can induce markedly different exposure to rare but high-impact outcomes.

To obtain a more systematic comparison, we replicate the experiment across multiple training samples and vary the ambiguity radius $\varepsilon$.

\begin{figure}[t]
\centering
 \begin{tabular}{cc}
  \includegraphics[scale=0.4]{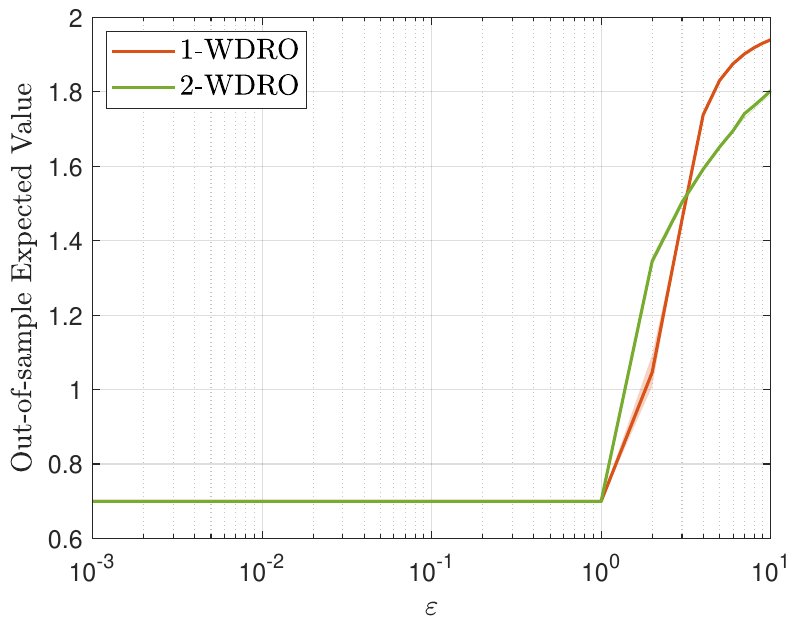} &
  \includegraphics[scale=0.4]{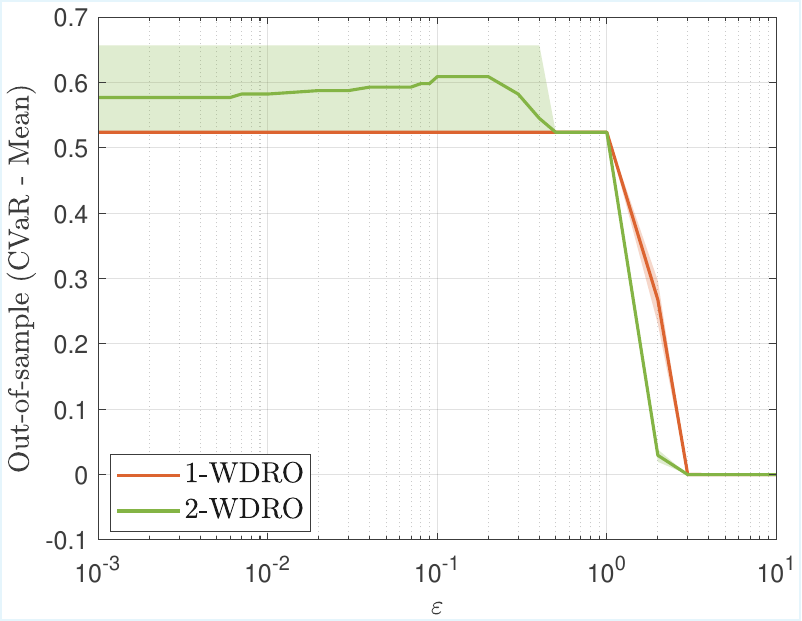} \\
  (a) Out-of-sample expected value & (b) Out-of-sample (CVaR$-\,$Mean)
 \end{tabular}
\caption{Comparison of 1-WDRO and 2-WDRO over multiple training samples and different radii $\varepsilon$. Panel (a) shows tubes between the 10\% and 90\% quantiles and the corresponding means of the out-of-sample expected value. Panel (b) reports the tubes and means for the out-of-sample difference between CVaR$_{\alpha}$ (with $\alpha=5\%$) and the mean. While both methods exhibit similar expected performance, 2-WDRO systematically attains larger CVaR$_\alpha$-to-mean gaps, indicating higher tail risk.}\label{fig:ComparacionEjemploMotivacional_intro}
\end{figure}

Figure~\ref{fig:ComparacionEjemploMotivacional_intro}(a) confirms that 1-WDRO and 2-WDRO deliver comparable out-of-sample expected values over a range of radii. In contrast, Figure~\ref{fig:ComparacionEjemploMotivacional_intro}(b) shows that, for the same configurations, the difference between out-of-sample CVaR$_{\alpha}$ and the mean is consistently larger for 2-WDRO than for 1-WDRO. Hence, in this example, 1-WDRO yields decisions that are less exposed to extreme losses, even though both methods are tuned to similar levels of distributional robustness in expectation.

\end{example}

Example~\ref{MotivationalExample} underscores two points. First, the choice of Wasserstein order is not merely a technical detail; it has direct consequences for the tail-risk behavior of the resulting solutions. Second, it suggests that there may be more transparent ways to encode how the sensitivity of $F(x,\xi)$ to perturbations in $\xi$ should be penalized, rather than relying implicitly on a specific $(p,\mathbf{d})$ configuration.

A key insight in this direction comes from recent asymptotic analyses of Wasserstein DRO \citep{shafieezadeh2019regularization,Blanchet2017,KuhnShafieeWiesemannReview2024,Gao2022VarReg}. Consider the Wasserstein ambiguity set
\[
\mathcal{B}_{\varepsilon}(\widehat{\mathbb{P}}_{n})
:=
\left\{
\mathbb{Q} \;:\; W_p(\mathbb{Q},\widehat{\mathbb{P}}_{n}) \le \varepsilon
\right\},
\]
that is, the ball of radius $\varepsilon$ (in $W_p$) centered at the empirical distribution $\widehat{\mathbb{P}}_{n}$. Under suitable smoothness, growth, and integrability conditions on $F$ (see Lemma~\ref{Lemma:taylorExpansion_intro_revised} in Section~\ref{subsec:wdro_background}), one can show that for each fixed decision $x$ and small $\varepsilon>0$,
\begin{equation}\label{eqn:IntroExpansionInformal}
\sup_{\mathbb{Q} \in \mathcal{B}_{\varepsilon}(\widehat{\mathbb{P}}_{n})} \mathbb{E}_{\xi\sim\mathbb{Q}}[F(x,\xi)]
=
\mathbb{E}_{\xi\sim\widehat{\mathbb{P}}_{n}}[F(x,\xi)]
+
\varepsilon \,\Phi_p\Big(\{\|\nabla_{\xi}F(x,\hat{\xi}_i)\|_*\}_{i=1}^n\Big)
+ o(\varepsilon),
\end{equation}
where $\|\cdot\|_*$ is the dual norm of the ground norm defining $W_p$, and $\Phi_p$ is a norm-like aggregation of the scenario-wise gradient norms. Informally:
\begin{itemize}
    \item for $p=1$, $\Phi_p$ behaves like the maximum (essential supremum) of $\|\nabla_{\xi}F(x,\hat{\xi}_i)\|_*$,
    \item for $p=\infty$, $\Phi_p$ behaves like the empirical mean of these norms,
    \item for $1<p<\infty$, $\Phi_p$ corresponds to an $\ell_q$-type combination of the conjugated norms $\|\nabla_{\xi}F(x,\hat{\xi}_i)\|_*$, where $q=p/(p-1)$ is the H\"older conjugate of $p$.
\end{itemize}
Thus, the leading correction term in \eqref{eqn:IntroExpansionInformal} explicitly depends on how sensitive $F(x,\xi)$ is to perturbations of $\xi$ at the observed scenarios. In other words, for small radii, Wasserstein DRO behaves like an SAA problem regularized by a functional of the gradient norms, whose precise structure is dictated by the choice of $p$, and penalizes the sensitivity of $F(x,\xi)$ with respect to perturbations in $\xi$. This perspective offers a unifying explanation for phenomena such as those observed in Example~\ref{MotivationalExample}: different choices of $p$ correspond to different ways of aggregating gradient norms across scenarios, thereby inducing different robustness profiles with respect to extreme outcomes.

At the same time, this implicit regularization viewpoint also exposes important limitations of WDRO as a practical tool. Implementing WDRO requires solving, or approximating, potentially complex min-max formulations whose tractability depends delicately on $F$, the chosen metric, and the dimension of $\xi$ \citep{KuhnShafieeWiesemannReview2024}. The appropriate Wasserstein order, ground metric, and ambiguity radius are problem-dependent and rarely obvious \emph{a priori}. In some models, WDRO may degenerate to SAA and provide little additional robustness; in others, the computational burden of exact reformulations (or their relaxations) may be prohibitive. These considerations suggest seeking alternatives  that attempt to:
\begin{itemize}
    \item retain the desirable robustness interpretations inspired by WDRO,
    \item make the penalized sensitivity aspects explicit and controllable, and
    \item remain computationally tractable for complex models.
\end{itemize}

We propose a flexible scenario-based regularized SAA framework. Our approach is directly inspired by the gradient-norm corrections appearing in the asymptotic expansion of WDRO, but it decouples the regularization design from the specific choice of Wasserstein order and metric. Concretely, we consider a Regularized SAA, which we term Scenario-Based Regularized SAA (SBR-SAA), where the regularizer
\[
\widehat{R}_m(x)
=
\left(\sum_{j=1}^m r_j \left\|\nabla_\xi F(x, \zeta_j)\right\|_*^{2}\right)^{\frac{1}{2}}
\]
is constructed from a selected set of scenarios $\{\zeta_j\}_{j=1}^m$ and nonnegative weights $\{r_j\}_{j=1}^m$. The key modeling flexibility is that these scenarios can be:
\begin{enumerate}
    \item a representative subset of the observed data, yielding a computationally efficient surrogate for WDRO based on a reduced scenario set, or
    \item user-specified \emph{adverse scenarios}, enabling targeted robustness against specific extreme events (e.g., historical crises or stress situations) without directly embedding CVaR or solving a full min--max DRO problem.
\end{enumerate}
This design makes explicit how local sensitivity of $F(x,\xi)$ is penalized and allows practitioners to encode structural and domain knowledge directly at the level of regularization. Evidently, we could choose another $\ell_q$-type formulation for $\widehat{R}_m(x)$ (see Section \ref{subsec:Portfolio}), but we use the above to emphasize the computational aspect of our approach. 

A central theoretical contribution of this paper is to show that the proposed SBR-SAA framework admits an exact interpretation as a WDRO problem with a modified objective function and a decision-dependent Wasserstein ball centered at a suitably perturbed empirical distribution. In particular, we prove an equivalence between SBR-SAA and a decision-dependent WDRO formulation, which leads to finite-sample performance guarantees and asymptotic consistency analogous to those established for classical WDRO \citep{MohajerinEsfahani2018}. This connection rigorously justifies SBR-SAA as a principled distributionally robust procedure, while highlighting its additional modeling flexibility.

We also demonstrate, through numerical experiments, that SBR-SAA can achieve a favorable balance between stability of the out-of-sample expected cost and protection against extreme events. In a multi-product newsvendor problem, SBR-SAA provides a tractable alternative to NP-hard WDRO formulations while preserving robustness guarantees. In a mean-risk portfolio optimization setting, it leverages historically adverse periods as adverse scenarios, improving out-of-sample tail performance without relying solely on direct CVaR penalization.

\subsection*{Contributions}

In summary, this work makes the following contributions:
\begin{enumerate}
    \item \textbf{Scenario-based regularization framework.} We introduce SBR-SAA, a gradient-norm regularized SAA framework that augments the empirical objective with a scenario-based sensitivity penalty. The regularizer is designed to be flexible, allowing the use of representative or adverse scenarios to shape robustness properties explicitly.
    \item \textbf{Connection to Wasserstein DRO.} We show that SBR-SAA is equivalent to a suitably constructed WDRO problem with a modified objective, radius, and (potentially decision-dependent) Wasserstein ball. This result formalizes the intuition suggested by asymptotic expansions of WDRO \citep{Blanchet2017,shafieezadeh2019regularization,KuhnShafieeWiesemannReview2024,Gao2022VarReg}, and clarifies how gradient-based regularization emerges from distributional robustness considerations.
    \item \textbf{Theoretical guarantees.} Building on the WDRO equivalence, we establish finite-sample guarantees and asymptotic consistency for SBR-SAA, in the spirit of the performance bounds and convergence results known for Wasserstein DRO \citep{MohajerinEsfahani2018}. These results ensure that, as the data size grows, SBR-SAA recovers optimal solutions of the original problem \eqref{eqn:SP_intro}.
    \item \textbf{Practical performance in applications.} We illustrate the effectiveness and interpretability of SBR-SAA in two applications: (i) a multi-product newsvendor problem, where SBR-SAA acts as a tractable surrogate for a challenging WDRO formulation, and (ii) a mean-risk portfolio optimization problem, where incorporating crisis scenarios into the regularizer leads to improved out-of-sample tail performance.
\end{enumerate}

The remainder of the paper is organized as follows. Section~\ref{subsec:wdro_background} provides definitions and reviews relevant results on Wasserstein DRO, including the detailed asymptotic expansion lemma underlying our regularization perspective. Section~\ref{sec:GradientRegularization} develops the SBR-SAA framework, establishes its equivalence to a decision-dependent WDRO, and derives its theoretical guarantees. Section~\ref{sec:NumericalExperiments} presents numerical experiments for the multi-product newsvendor and portfolio problems. Additional technical details, including the full construction and analysis of the motivating example, are provided in the Appendix.

\vspace{0.3cm}

\noindent\textbf{Notation.}
Throughout this paper, we denote by $\xi$ a random vector defined on a probability space $(\Omega,\mathcal{F},\mathbb{P})$ taking values in a support set $\Xi\subseteq\mathbb{R}^{d}$.
The decision variable is denoted by $x\in\mathcal{X}$, where $\mathcal{X}\subseteq\mathbb{R}^k$ is non-empty.
We use $\|\cdot\|$ for an arbitrary norm in a finite-dimensional Euclidean space, and $\|\cdot\|_{q}$, $q\in[1,\infty)$, for the usual $\ell_q$-norm.
For any positive integer $N$, we write $[N]:=\{1,2,\ldots,N\}$.
We use the standard Bachmann--Landau notation $O(\cdot)$ and $o(\cdot)$ for asymptotic bounds.

Given a measurable mapping $f:\Xi\to\mathbb{R}^{k}$ and a probability measure $\mathbb{Q}$ on $(\Xi,\mathcal{B}(\Xi))$, we denote by $f_{\#}\mathbb{Q}$ the \emph{push-forward} of $\mathbb{Q}$ through $f$, defined by
$f_{\#}\mathbb{Q}(A):=\mathbb{Q}(f^{-1}(A))$ for all Borel sets $A\subseteq\mathbb{R}^{k}$;
equivalently, $f_{\#}\mathbb{Q}$ is the distribution of $f(\xi)$ when $\xi\sim\mathbb{Q}$.
For a proper function $g:\mathbb{R}^n\to\mathbb{R}\cup\{+\infty\}$, we denote its convex conjugate by
$g^{*}(y):=\sup_{z\in\mathbb{R}^n}\{\langle y,z\rangle-g(z)\}$ when used.
Whenever we refer to the $q$-Lipschitz modulus of a function $h:\mathbb{R}^n\to\mathbb{R}$, we write
$\|h\|_{\mathrm{Lip},q}:=\sup_{x\neq y}|h(x)-h(y)|/\|x-y\|_{q}$, provided this quantity is finite.


\section{Background on Wasserstein Distributionally Robust Optimization}
\label{subsec:wdro_background}

This section reviews the foundational concepts and background results that underpin our work. Our proposed method draws inspiration from and builds upon the framework of Wasserstein Distributionally Robust Optimization. As formulated in \eqref{DROGeneral_intro}, WDRO considers an ambiguity set $\mathcal{D}$ constructed as a ball of probability measures around a reference distribution (typically the empirical distribution $\widehat{\mathbb{P}}_n$), where this ball is defined in terms of the Wasserstein distance.

We briefly recall the definition of the Wasserstein distance; see \cite{Vasershtein1969,Villani2008} for comprehensive treatments.

\begin{definition}[Wasserstein distance]\label{Def:MetricaWasserstein_intro}
\sloppy The $p$-\textit{Wasserstein distance} $W_{p}(\mathbb{P},\mathbb{Q})$ between $\mathbb{P},\mathbb{Q}\in\mathcal{P}_{p}(\mathcal{X})$ is defined by
\begin{equation*}
 W_{p}(\mathbb{P},\mathbb{Q}):=\begin{cases}\left(\inf\limits_{\Pi\in\mathcal{P}(\mathcal{X}\times\mathcal{X})}\left\{{\displaystyle\int_{\mathcal{X}\times\mathcal{X}}}\mathbf{d}^{p}(\xi,\zeta)\Pi(d\xi,d\zeta)\: :\: \Pi \mbox{ has marginals }\mathbb{P},\mathbb{Q} \right\}\right)^{1/p},  &  p\in[1,\infty), \\
\inf\limits_{\Pi\in\mathcal{P}(\mathcal{X}\times\mathcal{X})}\left\{\Pi-\underset{(\xi,\zeta)\in\mathcal{X}\times\mathcal{X}}{\mathrm{ess\:sup}}\mathbf{d}(\xi,\zeta)\: :\: \Pi \mbox{ has marginals }\mathbb{P},\mathbb{Q} \right\}, &  p=\infty,
 \end{cases}
\end{equation*}
where $\mathcal{P}_{p}(\mathcal{X}):=\left\{\mathbb{Q}\in\mathcal{P}(\mathcal{X})\: :\: \int_{\mathcal{X}}\mathbf{d}^{p}(\xi,\zeta_{0})\mathbb{Q}(d\xi) < \infty\ \mbox{for some }\zeta_{0}\in\mathcal{X}\right\}$ for each $p\in[1,\infty)$, $\mathcal{P}_{\infty}(\mathcal{X})=\mathcal{P}(\mathcal{X})$, and $\mathbf{d}$ is a metric on $\mathcal{X}$.
\end{definition}

Given a nominal distribution $\widehat{\mathbb{P}}_{n}$ and parameters $p\in[1,\infty]$ and $\varepsilon>0$, the associated Wasserstein ambiguity set is the ball
\[
\mathcal{B}_{\varepsilon}(\widehat{\mathbb{P}}_{n})
:=
\big\{
\mathbb{Q}\in\mathcal{P}(\Xi)
:
W_p(\mathbb{Q},\widehat{\mathbb{P}}_{n}) \le \varepsilon
\big\}.
\]
Within this framework, the WDRO counterpart of \eqref{eqn:SP_intro} is
\begin{equation}\label{eqn:p-WDRO}
\min_{x\in\mathcal{X}}
\sup_{\mathbb{Q}\in\mathcal{B}_{\varepsilon}(\widehat{\mathbb{P}}_{n})}
\mathbb{E}_{\xi\sim\mathbb{Q}}[F(x,\xi)],
\end{equation}
where the inner supremum accounts for worst-case perturbations of the empirical distribution within the Wasserstein ball. We call (\ref{eqn:p-WDRO}) the $p$-WDRO problem.

A key reason for the popularity of Wasserstein-based ambiguity sets is that, in many settings, the resulting $p$-WDRO problem admits a reformulation via strong duality. The following theorem, adapted from \cite{Gao2016,Blanchet2019} (see also \cite{MohajerinEsfahani2018,Mehrotra2017}), is a cornerstone result.

\begin{theorem}[\cite{Gao2016,Blanchet2019}]\label{Thm:ReformulacionDROWInterno}
Assume that $F(x,\cdot)$ is upper semicontinuous with respect to $\xi$ for every $x\in\mathcal{X}$. Then the $p$-WDRO problem (\ref{eqn:p-WDRO})
is equivalent to
\begin{equation}\label{Eqn:ReformulacionDROW}
\left\{
\begin{array}{lll}
\displaystyle \inf_{x\in\mathcal{X},\,\lambda\geq 0,\,s\in\mathbb{R}^n}
&
\displaystyle
\lambda \varepsilon^{p}
+\frac{1}{n}\sum_{i=1}^{n}s_{i}
&
\\[1ex]
\text{subject to}
&
\displaystyle
\sup_{\xi\in\Xi}\Big(F(x,\xi)-\lambda \mathbf{d}^{p}(\xi,\widehat{\xi}_{i}) \Big)
\leq s_{i},
&
\forall i \in [n],
\end{array}
\right.
\end{equation}
where $\mathbf{d}$ is the ground metric used to define $W_p$.
\end{theorem}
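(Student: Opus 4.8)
The plan is to fix the decision $x\in\mathcal{X}$, reformulate the inner worst-case expectation as a dual minimization problem, and then take the infimum over $x$ at the very end. Since the nominal measure $\widehat{\mathbb{P}}_n=\frac1n\sum_{i=1}^n\delta_{\widehat\xi_i}$ is finitely supported, I would first exploit disintegration: every coupling $\Pi$ with first marginal $\widehat{\mathbb{P}}_n$ and second marginal $\mathbb{Q}$ can be written as $\Pi=\frac1n\sum_{i=1}^n\delta_{\widehat\xi_i}\otimes\mathbb{Q}_i$ for conditional distributions $\mathbb{Q}_i\in\mathcal{P}(\Xi)$, with $\mathbb{Q}=\frac1n\sum_i\mathbb{Q}_i$ and $\int \mathbf{d}^p\,d\Pi=\frac1n\sum_i\int_\Xi\mathbf{d}^p(\xi,\widehat\xi_i)\,\mathbb{Q}_i(d\xi)$. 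Because $\mathbb{E}_{\xi\sim\mathbb{Q}}[F(x,\xi)]$ depends only on the second marginal, and $W_p(\mathbb{Q},\widehat{\mathbb{P}}_n)\le\varepsilon$ merely asks for the \emph{existence} of one coupling within budget, the inner supremum equals (for $p<\infty$; the $p=\infty$ case is analogous with an essential-sup constraint) the infinite-dimensional linear program
\[
\sup_{\mathbb{Q}_1,\dots,\mathbb{Q}_n\in\mathcal{P}(\Xi)}\Big\{\tfrac1n\textstyle\sum_{i=1}^n\int_\Xi F(x,\xi)\,\mathbb{Q}_i(d\xi)\ :\ \tfrac1n\textstyle\sum_{i=1}^n\int_\Xi\mathbf{d}^p(\xi,\widehat\xi_i)\,\mathbb{Q}_i(d\xi)\le\varepsilon^p\Big\}.
\]

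Next I would dualize the single scalar budget constraint. Attaching a multiplier $\lambda\ge0$ and using that $\sup_{\mathbb{Q}_i\in\mathcal{P}(\Xi)}\int_\Xi g\,d\mathbb{Q}_i=\sup_{\xi\in\Xi}g(\xi)$, the Lagrangian maximization over the $\mathbb{Q}_i$ decouples scenario by scenario and yields the dual value
\[
\inf_{\lambda\ge0}\ \lambda\varepsilon^p+\tfrac1n\sum_{i=1}^n\sup_{\xi\in\Xi}\big(F(x,\xi)-\lambda\,\mathbf{d}^p(\xi,\widehat\xi_i)\big).
\]
Introducing epigraph variables $s_i$ with $s_i\ge\sup_{\xi\in\Xi}\big(F(x,\xi)-\lambda\,\mathbf{d}^p(\xi,\widehat\xi_i)\big)$ for all $i\in[n]$ turns this into the constrained program \eqref{Eqn:ReformulacionDROW}, and minimizing over $x\in\mathcal{X}$ on both sides gives the stated equivalence. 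Weak duality — the dual value being an upper bound for the primal value — is immediate from the Lagrangian construction; the real content is the reverse inequality, i.e.\ the absence of a duality gap.

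The main obstacle is therefore establishing strong duality for this moment-type program. I would invoke a Rockafellar/Shapiro-style duality theorem for linear problems over measures (the ``problem of moments''), verifying the mild hypotheses it requires: the feasible set is nonempty and strictly feasible whenever $\varepsilon>0$ (take $\mathbb{Q}_i=\delta_{\widehat\xi_i}$, i.e.\ $\mathbb{Q}=\widehat{\mathbb{P}}_n$, which has coupling cost $0<\varepsilon^p$, a Slater point), while upper semicontinuity of $F(x,\cdot)$ together with continuity of $\mathbf{d}$ provides the semicontinuity of the objective and constraint functionals in the weak topology needed for the minimax argument. Alternatively one can argue directly: for $\lambda^\star$ nearly optimal in the dual, choose $\eta$-maximizers $\xi_i^\star$ of each inner supremum (using usc, and a growth/coercivity or truncation argument on $F(x,\cdot)-\lambda^\star\mathbf{d}^p(\cdot,\widehat\xi_i)$ if $\Xi$ is unbounded), then form a mixture of the $\delta_{\xi_i^\star}$ and $\delta_{\widehat\xi_i}$ that meets the budget $\varepsilon^p$ with equality, and check that the resulting $\mathbb{Q}$ attains the dual value up to $O(\eta)$. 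The degenerate cases should be dispatched separately — if the inner supremum is $+\infty$ for every $\lambda$ then both sides are $+\infty$, and if the budget is slack at the optimum then $\lambda^\star=0$ and the identity is trivial — so that the primal and dual values coincide in all regimes. Since the equivalence is classical, it is legitimate to carry out the disintegration and Lagrangian steps explicitly and cite \cite{Gao2016,Blanchet2019} for the strong-duality step itself.
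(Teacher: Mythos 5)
The paper does not prove Theorem~\ref{Thm:ReformulacionDROWInterno}: it is stated as a result adapted from \cite{Gao2016,Blanchet2019} and used as a black box, so there is no in-paper proof to compare against. Your proposal is a correct reconstruction of the standard argument from those references: disintegrating couplings against the finitely supported nominal measure $\widehat{\mathbb{P}}_n$ to reduce the inner supremum to a linear program over conditional distributions $\mathbb{Q}_1,\dots,\mathbb{Q}_n$ with a single scalar transport budget, dualizing that constraint with a multiplier $\lambda\ge 0$ so the maximization decouples scenario by scenario into $\sup_{\xi\in\Xi}\big(F(x,\xi)-\lambda\,\mathbf{d}^p(\xi,\widehat{\xi}_i)\big)$, and then closing the duality gap either by a moment-problem duality theorem (your Slater point $\mathbb{Q}_i=\delta_{\widehat{\xi}_i}$ is valid for $\varepsilon>0$) or by the explicit mixture construction from $\eta$-maximizers. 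You correctly identify strong duality as the only nontrivial step and defer it to the cited sources, which is legitimate here since the paper defers the entire theorem. The only caveat worth flagging is that the reformulation \eqref{Eqn:ReformulacionDROW} as written only makes literal sense for $p\in[1,\infty)$ (the term $\lambda\varepsilon^{p}$ degenerates at $p=\infty$), and your remark that the $p=\infty$ case requires the essential-supremum form of the budget is the right qualification.
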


Theorem~\ref{Thm:ReformulacionDROWInterno} transforms the challenging min-max problem over a space of probability measures into an optimization problem over $(x,\lambda,s)$, but the resulting formulation can still be demanding. The constraints involve, for each $i\in[n]$, a supremum over $\xi\in\Xi$ that may itself be a nontrivial optimization problem, possibly leading to semi-infinite programs or requiring problem-specific analysis. 

Beyond tractability, WDRO admits an interpretation as an implicitly regularized version of SAA. Recent results show that, under suitable regularity conditions, the worst-case expectation over a small Wasserstein ball admits a first-order expansion in the radius. This expansion, which we use as a key analytical tool, precisely outlines how Wasserstein DRO penalizes the sensitivity of $F(x,\xi)$ with respect to perturbations in $\xi$. The following lemma, based on Theorem~8.7 in \cite{KuhnShafieeWiesemannReview2024} for $p\in[1,\infty)$ and Lemma~1 in \cite{Gao2022VarReg} for $p=\infty$, formalizes this connection.

\begin{lemma}[Taylor expansion of worst-case expectation]\label{Lemma:taylorExpansion_intro_revised}
In \eqref{DROGeneral_intro}, suppose that $p \in [1, \infty]$, where $W_p$ is induced by a norm $\|\cdot\|$ on $\mathbb{R}^d$, and that $\Xi$ is convex. Assume also that:
\begin{enumerate}
    \item[(i)] \textbf{Growth condition (for $p \in [1,\infty)$).} There exist $G, \delta_0 > 0$ such that
    \[
    F(x,\xi) - F(x,\xi') \le G\|\xi - \xi'\|^p
    \]
    for all $\xi, \xi' \in \Xi$ with $\|\xi - \xi'\| > \delta_0$ and all $x\in\mathcal{X}$.
    \item[(ii)] \textbf{Smoothness condition.} There exists $L > 0$ such that
    \[
    \|\nabla_{\xi} F(x,\xi) - \nabla_{\xi} F(x,\xi')\|_* \le L\|\xi - \xi'\|
    \]
    for all $\xi, \xi' \in \Xi$ and $x\in\mathcal{X}$, where $\|\cdot\|_*$ is the norm dual to $\|\cdot\|$. (For $p=\infty$, this is the main requirement; see \cite{Gao2022VarReg}.)
    \item[(iii)] \textbf{Integrability condition (for $p \in [1,\infty)$).} Let $q = p/(p - 1)$ be the H\"older conjugate of $p$. Then
    \[
    \mathbb{E}_{\xi\sim\widehat{\mathbb{P}}_n}\big[\|\nabla_{\xi} F(x,\xi)\|_*^q\big]
    \quad\text{and}\quad
    \mathbb{E}_{\xi\sim\widehat{\mathbb{P}}_n}\big[\|\nabla_{\xi} F(x,\xi)\|_*^{2q-2}\big]
    \]
    are finite, and the ratio
    \[
    \frac{
    \mathbb{E}_{\xi\sim\widehat{\mathbb{P}}_n}[\|\nabla_{\xi} F(x,\xi)\|_*^{2q-2}]
    }{
    \big(\mathbb{E}_{\xi\sim\widehat{\mathbb{P}}_n}[\|\nabla_{\xi} F(x,\xi)\|_*^q]\big)^{2/p}
    }
    \]
    is bounded.
\end{enumerate}
Then, for each fixed $x\in\mathcal{X}$ and small $\varepsilon>0$,
\begin{equation} \label{eqn:RegBound_intro_revised}
\sup_{\mathbb{P} \in  \mathcal{B}_{\varepsilon}(\widehat{\mathbb{P}}_n)} \mathbb{E}_{\xi\sim\mathbb{P}}[F(x,\xi)]
=
\mathbb{E}_{\xi\sim\widehat{\mathbb{P}}_n}[F(x,\xi)]
+
\varepsilon \left(\mathbb{E}_{\xi\sim\widehat{\mathbb{P}}_n}[\|\nabla_{\xi} F(x,\xi)\|_*^q]\right)^{\!1/q}
+ o(\varepsilon).
\end{equation}
For $p=\infty$, the term $\left(\mathbb{E}_{\xi\sim\widehat{\mathbb{P}}_n}[\|\nabla_{\xi} F(x,\xi)\|_*^q]\right)^{1/q}$ in \eqref{eqn:RegBound_intro_revised} is replaced by $\mathbb{E}_{\xi\sim\widehat{\mathbb{P}}_n}[\|\nabla_{\xi} F(x,\xi)\|_*]$, while for $p=1$ it becomes $\mathrm{ess\,sup}_{\xi \sim \widehat{\mathbb{P}}_n} \|\nabla_{\xi} F(x,\xi)\|_*$ under the appropriate interpretation for $q=\infty$.
\end{lemma}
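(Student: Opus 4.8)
The plan is to work scenario-by-scenario, since $\widehat{\mathbb{P}}_n$ is a finitely supported measure. First I would invoke the strong duality result of Theorem~\ref{Thm:ReformulacionDROWInterno}: the worst-case expectation over $\mathcal{B}_\varepsilon(\widehat{\mathbb{P}}_n)$ equals $\inf_{\lambda\ge 0}\{\lambda\varepsilon^p + \frac1n\sum_{i=1}^n \sup_{\xi\in\Xi}(F(x,\xi) - \lambda\mathbf{d}^p(\xi,\widehat\xi_i))\}$, with $\mathbf{d}(\xi,\xi') = \|\xi-\xi'\|$. For small $\varepsilon$ the optimal $\lambda = \lambda(\varepsilon)$ must blow up (otherwise the penalty term cannot force the inner suprema back to a neighborhood of each $\widehat\xi_i$); so I would first establish the rate at which $\lambda(\varepsilon)\to\infty$. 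Writing $\xi = \widehat\xi_i + t u$ with $\|u\|=1$ and $t\ge 0$, the inner supremum over $\xi$ near $\widehat\xi_i$ is, by the smoothness condition~(ii) and a second-order Taylor estimate, $F(x,\widehat\xi_i) + \sup_{t\ge 0}\big(t\,\|\nabla_\xi F(x,\widehat\xi_i)\|_* - \lambda t^p + O(Lt^2)\big)$; the growth condition~(i) rules out the supremum escaping to large $t$. For $p\in(1,\infty)$ the unconstrained maximizer of $t\mapsto t\,g_i - \lambda t^p$ (with $g_i := \|\nabla_\xi F(x,\widehat\xi_i)\|_*$) is $t_i^\star = (g_i/(p\lambda))^{1/(p-1)}$, giving value $c_p\, g_i^{q}\,\lambda^{-1/(p-1)}$ with $c_p = (p-1)p^{-q}$; the correction from the $O(Lt^2)$ term is of strictly smaller order in $\lambda^{-1}$ (here the integrability bounds in (iii) are exactly what make the aggregated error term $o$ of the main term).

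Plugging this back, the dual objective becomes, to leading order, $\lambda\varepsilon^p + \frac1n\sum_i F(x,\widehat\xi_i) + c_p\lambda^{-1/(p-1)}\cdot\frac1n\sum_i g_i^q + (\text{lower order})$. Minimizing the explicit part $\lambda\varepsilon^p + c_p\lambda^{-1/(p-1)} m_q$ over $\lambda>0$ (where $m_q := \frac1n\sum_i g_i^q$) is a one-variable calculus exercise: the optimal $\lambda$ scales like $\varepsilon^{-p(p-1)/(p)} \cdot m_q^{(p-1)/p}$ up to a constant, and substituting gives an optimal value $\mathbb{E}_{\widehat{\mathbb{P}}_n}[F(x,\cdot)] + \varepsilon\, m_q^{1/q} + o(\varepsilon)$, using $1/q = 1/p'$ bookkeeping $q = p/(p-1)$. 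This recovers \eqref{eqn:RegBound_intro_revised}. The endpoint cases follow by separate but easier arguments: for $p=1$ the dual penalty $\lambda\mathbf{d}(\xi,\widehat\xi_i)$ is linear in the displacement, so each inner supremum is finite only when $\lambda\ge g_i$ and then equals $F(x,\widehat\xi_i)$, forcing $\lambda\to \max_i g_i = \mathrm{ess\,sup}\,\|\nabla_\xi F\|_*$ as $\varepsilon\downarrow 0$; for $p=\infty$ one uses instead the $W_\infty$ duality (Lemma~1 of \cite{Gao2022VarReg}), under which each scenario may move by at most $\varepsilon$ in norm, so the worst case is $\frac1n\sum_i \sup_{\|\delta\|\le\varepsilon} F(x,\widehat\xi_i+\delta) = \frac1n\sum_i(F(x,\widehat\xi_i) + \varepsilon g_i) + o(\varepsilon)$ by the smoothness condition, i.e. the empirical mean of the dual gradient norms.

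The main obstacle is making the "lower order" claims fully rigorous and \emph{uniform enough} to survive the nonlinear operations of taking the supremum over $\xi$ and then the infimum over $\lambda$. Concretely: (a) one must show the inner supremum over $\xi\in\Xi$ is genuinely attained in a shrinking ball around $\widehat\xi_i$ as $\lambda\to\infty$ — this is where the growth condition~(i) together with $\lambda\varepsilon^p\to 0$ is used, to cap contributions from $\|\xi-\widehat\xi_i\|>\delta_0$; (b) the second-order remainder $O(Lt^2)$ must be controlled at the optimal $t_i^\star$ simultaneously across all $i$, and after summation compared against $\varepsilon\, m_q^{1/q}$ — this is precisely what the boundedness of the ratio in (iii) delivers (it prevents a single scenario with an anomalously large gradient norm from dominating the remainder); and (c) the interchange of the expansion with the outer $\inf_\lambda$ requires a mild equicontinuity/monotonicity argument (e.g., sandwiching the dual objective between two explicit one-variable functions of $\lambda$ whose minima agree to order $\varepsilon$). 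Once these three technical points are in place, the result follows by assembling the estimates. Since the cited references (Theorem~8.7 in \cite{KuhnShafieeWiesemannReview2024} and Lemma~1 in \cite{Gao2022VarReg}) already carry out this program, I would present the proof as a specialization of those results to $\widehat{\mathbb{P}}_n$ finitely supported, emphasizing only the places where the finite support simplifies the argument (the expectations become finite sums, so all integrability reduces to finiteness of finitely many gradient norms and the ratio condition is automatic).
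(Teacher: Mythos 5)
The paper does not actually prove this lemma: it is stated as a background result ``based on Theorem~8.7 in \cite{KuhnShafieeWiesemannReview2024} for $p\in[1,\infty)$ and Lemma~1 in \cite{Gao2022VarReg} for $p=\infty$,'' with no argument supplied. Your sketch therefore goes beyond the paper rather than diverging from it, and for $p\in(1,\infty)$ it is a faithful reconstruction of the route those references take: dualize via Theorem~\ref{Thm:ReformulacionDROWInterno}, localize each inner supremum near $\widehat{\xi}_i$ using the growth condition, expand to get the value $c_p\,g_i^{q}\lambda^{-1/(p-1)}$ with $c_p=(p-1)p^{-q}$, and optimize the resulting one-variable function of $\lambda$; your constants and the final value $\varepsilon\,m_q^{1/q}$ check out, your $p=\infty$ argument via bounded per-scenario displacement is the standard one, and your closing remark that the proof should be presented as a specialization of the cited results to finitely supported $\widehat{\mathbb{P}}_n$ is exactly the stance the paper itself adopts.

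One step does not hold up as written: the $p=1$ endpoint. The inner supremum $\sup_{\xi\in\Xi}\bigl(F(x,\xi)-\lambda\|\xi-\widehat{\xi}_i\|\bigr)$ is finite and equal to $F(x,\widehat{\xi}_i)$ only when $\lambda$ dominates the \emph{global} slope $\sup_{\xi}\bigl(F(x,\xi)-F(x,\widehat{\xi}_i)\bigr)/\|\xi-\widehat{\xi}_i\|$, which under the smoothness condition can strictly exceed the local gradient norm $g_i=\|\nabla_\xi F(x,\widehat{\xi}_i)\|_*$ (the gradient may be larger away from the sample points). So your argument yields a coefficient involving global Lipschitz-type constants rather than $\max_i g_i=\mathrm{ess\,sup}_{\xi\sim\widehat{\mathbb{P}}_n}\|\nabla_\xi F(x,\xi)\|_*$; recovering the stated $q=\infty$ form requires the more delicate limiting argument in the cited references (or treating $p=1$ as the $q\to\infty$ limit of the $L^q(\widehat{\mathbb{P}}_n)$ case), not the exact finiteness dichotomy you invoke. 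Since the lemma is cited rather than proved in the paper, this does not affect anything downstream, but the $p=1$ justification should be replaced or explicitly deferred to the references along with the other two endpoint-free technical points you already flag.
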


Lemma~\ref{Lemma:taylorExpansion_intro_revised} shows that, to first order in $\varepsilon$, the WDRO objective over a Wasserstein ball around $\widehat{\mathbb{P}}_n$ coincides with the empirical expectation plus a regularization term determined by the distribution of gradient norms $\|\nabla_{\xi}F(x,\widehat{\xi}_i)\|_*$. Different choices of $p$ correspond to different ways of aggregating these norms (via $q$), which in turn yield different robustness profiles. This perspective underlies our scenario-based regularization framework: instead of relying solely on the implicit regularization induced by a particular Wasserstein metric, we design an explicit, scenario-driven gradient-norm penalty that inherits the distributional robustness interpretation of WDRO while offering additional modeling flexibility.

\section{A Flexible Scenario-based Regularization Framework}
\label{sec:GradientRegularization}

Building upon the motivation to achieve robust solutions against both distributional shifts and extreme outcomes, this section details our proposed approach. As foreshadowed in Section \ref{Sec:Introduccion}, we address the stochastic optimization problem \eqref{eqn:SP_intro} by introducing a regularized Sample Average Approximation (SAA) scheme. This scheme is inspired by the asymptotic structure of WDRO but offers greater flexibility and computational control.

\subsection{Formulation of the Scenario-based Regularized SAA}
\label{subsec:FormulationGNR}

Our core proposal, which we call Scenario-based Regularized SAA (SBR-SAA), is to solve the following regularized optimization problem:
\begin{equation}\label{eq:SBR-SAA_sec}
\min_{x\in\mathcal{X}} \; \frac{1}{n}\sum_{i=1}^{n}F(x,\widehat{\xi}_i)+\varepsilon\, \widehat{R}_{m}(x),
\end{equation}
where the first term is the standard SAA objective based on the empirical data $\hat{\Xi}_{n}= \{\widehat{\xi}_1, \ldots, \widehat{\xi}_n\}$, $\varepsilon \ge 0$ is a regularization parameter that controls the trade-off with the penalty term, and $\widehat{R}_{m}(x)$ is the Scenario-based Regularization term defined as:
\begin{equation}\label{eqn:RegTerm_sec}
\widehat{R}_{m}(x)=\left(\sum_{j=1}^{m}r_j\,\Big\|\nabla_\xi F(x,\zeta_j)\Big\|_*^{2}\right)^{\frac{1}{2}}.
\end{equation}
Here, $\Xi_{\mathrm{reg}}=\{\zeta_j\}_{j=1}^m$ represents a finite set of $m$ pre-selected \textit{scenarios}, and $r_j \ge 0$ are the corresponding weights such that $\sum_{j=1}^{m}r_j=1$. The term $\nabla_\xi F(x,\zeta_j)$ denotes the gradient of the objective function $F(x,\xi)$ with respect to the random variable $\xi$, evaluated at the decision $x$ and scenario $\zeta_j$. We use the dual norm $\|\cdot\|_*$ to maintain consistency with Lemma \ref{Lemma:taylorExpansion_intro_revised}. In cases where $F(x,\cdot)$ is not differentiable with respect to $\xi$, the gradient can be replaced by a subgradient (i.e., an element of the subdifferential) of $F(x,\cdot)$ at $\zeta_j$.

\sloppy The regularization term $\widehat{R}_{m}(x)$ in \eqref{eqn:RegTerm_sec} penalizes the weighted average magnitude of the (sub)gradients of $F(x,\cdot)$ at the scenarios in $\Xi_{\mathrm{reg}}$. The flexibility of this framework stems from the choice of $\Xi_{\mathrm{reg}}$ and the weights $\{r_j\}$, which enables the two distinct purposes identified in the introduction.

\begin{enumerate}
    \item \textbf{WDRO surrogate:} The goal is to create a computationally tractable alternative to standard WDRO that exhibits good out-of-sample performance (in mean and tail). Here, $\Xi_{\mathrm{reg}}$ is chosen as a representative subset of the full empirical sample $\hat{\Xi}_n$, i.e., $\Xi_{\mathrm{reg}} \subset \hat{\Xi}_n$ with $m < n$. This is motivated by the fact that using $m=n$ to approximate the regularizer term $\mathbb{E}_{\xi\sim\hat{\mathbb{P}}_n}[\|\nabla_{\xi} F(x,\xi)\|_*]$ can be computationally prohibitive if evaluating the gradient norm is expensive. By using a smaller and properly selected subset, we aim to retain the robustness properties of WDRO at a fraction of the computational cost. The selection of $\Xi_{\mathrm{reg}}$ and the weights $\{r_j\}$ becomes a key modeling decision, for which we will later propose a principled method based on k-medoids and Wasserstein distance minimization to ensure the $m$-point measure well-approximates $\hat{\mathbb{P}}_n$.
    
    \item \textbf{Targeted Robustness:} The goal is to protect the solution against specific, high-impact events explicitly. Here, $\Xi_{\mathrm{reg}}$ is defined as a set of \textbf{adverse scenarios}. An adverse scenario $\zeta_j$ is a specific realization of $\xi$ that signifies extreme, unfavorable, or highly sensitive conditions for the system. These scenarios might be identified from historical data (e.g., the 2008 financial crisis, the 2020 COVID-19 pandemic), derived from expert judgment, or generated via stress testing. In this context, $\widehat{R}_m(x)$ acts as a targeted penalty, pushing the optimizer towards solutions $x$ that are less sensitive in these critical regions. This provides a mechanism for incorporating domain-specific knowledge about tail risks directly into the optimization, a feature not available in standard data-driven WDRO.
\end{enumerate}

By incorporating this regularizer, the SBR-SAA framework aims to find a decision $x$ that not only performs well on average (due to the SAA term) but is also robust. Depending on the choice of $\Xi_{\mathrm{reg}}$, this robustness manifests either as stable out-of-sample performance (emulating WDRO) or as explicit insensitivity to known critical conditions. The weights $r_j$ provide additional flexibility in assigning relative importance to each scenario in $\Xi_{\mathrm{reg}}$.


\subsection{Equivalence to a Decision-Dependent Wasserstein DRO}
\label{subsec:EquivalenceWDRO}

To analyze the theoretical properties of our proposed approach \eqref{eq:SBR-SAA_sec}, particularly its robustness characteristics, we first establish its connection to a form of Distributionally Robust Optimization (DRO). We consider a more general regularized SAA problem:
\begin{equation}\label{eqn:SAARegGeneral_sec}
\min_{x\in \mathcal{X}} \frac{1}{n}\sum_{i=1}^{n}F\bigl(x,\widehat{\xi}_{i}\bigr) + \varepsilon R(x),
\end{equation}
where $R(x)$ is a non-negative function, $R:\mathcal{X}\to \mathbb{R}_{+}$, which may or may not depend on the sample $\hat{\Xi}_n$. We adopt this general perspective first to demonstrate the broad applicability of the theoretical framework. Our proposed problem \eqref{eq:SBR-SAA_sec} is a special case of \eqref{eqn:SAARegGeneral_sec} with $R(x)$ defined as in \eqref{eqn:RegTerm_sec}. We will show that problems of this form are equivalent to a specific type of WDRO where the ambiguity set itself depends on the decision variable $x$.

\begin{theorem}\label{Thm:EquivalenceRegSAAvsWDRO}
Let $\varepsilon>0$. If $p \geq 1$, $R(x)\geq 0$ (which may depend on the sample), and for all $x\in\mathcal{X}$, $F(x,\Xi)$ is an interval and $\sup_{\xi\in\Xi}F(x,\xi)=\infty$, then the problem \eqref{eqn:SAARegGeneral_sec} is equivalent, almost surely, to
\begin{equation} \label{eqn:WDROFormSAARegGen_sec}
\widehat{J}^{\mathrm{DD}}_{R,n}(\varepsilon):={\displaystyle \min_{x\in\mathcal{X}} }  {\displaystyle\sup_{\mathbb{Q}\in\mathcal{B}_{\varepsilon R(x)}\left(F(x,\cdot)_{\#}\widehat{\mathbb{P}}_{n}\right) } \mathbb{E}_{\varsigma\sim\mathbb{Q}}[\varsigma] }
\end{equation}
where $\mathcal{B}_{\varepsilon R(x)}\left(F(x,\cdot)_{\#}\widehat{\mathbb{P}}_{n}\right)$ is a ball centered at the propagated empirical distribution $F(x,\cdot)_{\#}\widehat{\mathbb{P}}_{n}$ with radius $\varepsilon R(x)$. This ball is defined with respect to the $p$-Wasserstein distance in $\mathbb{R}$, where the ground cost function used is $\mathbf{d}(\varsigma_1,\varsigma_2)=\left|\varsigma_1-\varsigma_2\right|$.
Specifically, for any given $x \in \mathcal{X}$ and sample realization, the following identity holds almost surely:
\begin{equation} \label{eqn:WDROFormSAARegGen_presec}
\frac{1}{n}\sum_{i=1}^{n}F\bigl(x,\widehat{\xi}_{i}\bigr) + \varepsilon R(x) =   {\displaystyle\sup_{\mathbb{Q}\in\mathcal{B}_{\varepsilon R(x)}\left(F(x,\cdot)_{\#}\widehat{\mathbb{P}}_{n}\right) } \mathbb{E}_{\varsigma\sim\mathbb{Q}}[\varsigma] }
\end{equation}
\end{theorem}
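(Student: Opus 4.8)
The plan is to fix an arbitrary $x\in\mathcal{X}$ and a sample realization, write $\widehat{y}_i:=F(x,\widehat{\xi}_i)$ for $i\in[n]$, and recognize the right-hand side of \eqref{eqn:WDROFormSAARegGen_presec} as the worst-case expectation of the \emph{identity} cost $\widetilde F(\varsigma):=\varsigma$ over a one-dimensional Wasserstein ball of radius $\varepsilon R(x)$ around the empirical measure $\tfrac{1}{n}\sum_{i=1}^{n}\delta_{\widehat{y}_i}$, with ground metric $\mathbf{d}(\varsigma_1,\varsigma_2)=|\varsigma_1-\varsigma_2|$ on the support $F(x,\Xi)\subseteq\mathbb{R}$. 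Since $\widetilde F$ is continuous, hence upper semicontinuous, Theorem~\ref{Thm:ReformulacionDROWInterno} applies to this (fixed-$x$) worst-case expectation and reduces it to the finite-dimensional program
\[
\inf_{\lambda\geq 0,\; s\in\mathbb{R}^{n}}\ \lambda\,(\varepsilon R(x))^{p}+\frac{1}{n}\sum_{i=1}^{n}s_i
\qquad\text{s.t.}\qquad
\sup_{\varsigma\in F(x,\Xi)}\bigl(\varsigma-\lambda|\varsigma-\widehat{y}_i|^{p}\bigr)\leq s_i,\ \ \forall i\in[n].
\]
The whole argument then amounts to solving this scalar program explicitly and checking that its value equals $\tfrac{1}{n}\sum_i\widehat{y}_i+\varepsilon R(x)$, which is the left-hand side of \eqref{eqn:WDROFormSAARegGen_presec}; minimizing over $x\in\mathcal{X}$ afterwards yields the equivalence of \eqref{eqn:SAARegGeneral_sec} and \eqref{eqn:WDROFormSAARegGen_sec}.

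Next I would exploit the structural hypotheses on the inner supremum. Because $F(x,\Xi)$ is an interval containing each $\widehat{y}_i$ and $\sup_{\xi\in\Xi}F(x,\xi)=\infty$, one has $[\widehat{y}_i,\infty)\subseteq F(x,\Xi)$, so the inner supremum is effectively unconstrained on the upper branch; and since $\varsigma\mapsto\varsigma-\lambda|\varsigma-\widehat{y}_i|^{p}$ is strictly increasing on $(-\infty,\widehat{y}_i]$ for every $\lambda\geq 0$, the supremum equals $\sup_{\varsigma\geq\widehat{y}_i}\bigl(\varsigma-\lambda(\varsigma-\widehat{y}_i)^{p}\bigr)$. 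A one-variable computation then produces three regimes: the supremum is $+\infty$ (making the constraint infeasible) when $\lambda=0$, and also when $p=1$ and $\lambda<1$; it equals $\widehat{y}_i$ when $p=1$ and $\lambda\geq1$; and for $p>1$ and $\lambda>0$ it equals $\widehat{y}_i+C_p\,\lambda^{-1/(p-1)}$, attained at $\varsigma=\widehat{y}_i+(\lambda p)^{-1/(p-1)}$, with $C_p:=(p-1)\,p^{-p/(p-1)}$.

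Finally I would substitute the minimal feasible $s_i$, which decouples the program into $\tfrac{1}{n}\sum_i\widehat{y}_i$ plus a scalar infimum in $\lambda$. For $p=1$ this infimum is $\inf_{\lambda\geq1}\lambda\,\varepsilon R(x)=\varepsilon R(x)$, attained at $\lambda=1$. For $p>1$ it is $\inf_{\lambda>0}h(\lambda)$ with $h(\lambda):=\lambda A+C_p\lambda^{-1/(p-1)}$ and $A:=(\varepsilon R(x))^{p}$; here $h$ is strictly convex on $(0,\infty)$ and blows up at both ends, so the minimizer $\lambda^{\star}=\bigl(C_p/(A(p-1))\bigr)^{(p-1)/p}$ comes from $h'(\lambda^{\star})=0$, and substituting back---using $C_p/(p-1)=p^{-p/(p-1)}$---collapses the expression to $h(\lambda^{\star})=A^{1/p}=\varepsilon R(x)$. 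In every case the program's value is $\tfrac{1}{n}\sum_i F(x,\widehat{\xi}_i)+\varepsilon R(x)$, which proves \eqref{eqn:WDROFormSAARegGen_presec}; the ``almost surely'' qualifier only records that $\widehat{\mathbb{P}}_n$ is a random measure and that the identity holds for every realization with $\widehat{\xi}_i\in\Xi$.

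The hard part will be the bookkeeping around the inner supremum: verifying that $\sup_{\xi}F(x,\xi)=\infty$ is genuinely used (without it the constraint need not blow up for small $\lambda$ and the claimed identity fails), isolating the upper branch correctly, and carrying out the scalar minimization so that all auxiliary constants cancel to leave exactly $\varepsilon R(x)$; the rest is a routine specialization of Theorem~\ref{Thm:ReformulacionDROWInterno}. If one also wishes to cover $p=\infty$, the conclusion follows even more directly: every $\mathbb{Q}$ in the $W_\infty$-ball of radius $\varepsilon R(x)$ satisfies $\mathbb{E}_{\mathbb{Q}}[\varsigma]\leq\tfrac{1}{n}\sum_i(\widehat{y}_i+\varepsilon R(x))$, with equality attained by shifting every atom up by $\varepsilon R(x)$, which is feasible since $[\widehat{y}_i,\infty)\subseteq F(x,\Xi)$.
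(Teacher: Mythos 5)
Your proposal is correct and follows essentially the same route as the paper: the paper's Appendix proof also fixes the sample realization, applies the duality reformulation of Theorem~\ref{Thm:ReformulacionDROWInterno} to the identity cost on $F(x,\Xi)\subseteq\mathbb{R}$, uses $\sup_{\xi\in\Xi}F(x,\xi)=\infty$ and the interval hypothesis to evaluate the inner supremum (obtaining exactly your constant $\tfrac{p-1}{\lambda^{1/(p-1)}p^{p/(p-1)}}$ for $p>1$ and the $\lambda\ge 1$ dichotomy for $p=1$), and then minimizes over $\lambda$ to recover $\bar\varsigma+\varepsilon R(x)$. The only cosmetic difference is that the paper packages the scalar computation as a standalone lemma (Lemma~\ref{Lemma:RobustVersionOfMean}) for a generic real-valued random variable before specializing to $\varsigma=F(x,\xi)$.
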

\begin{proof}
We first consider the case where $R(x) = C$ is a deterministic constant $C > 0$. Under the given assumptions, the identity 
\begin{equation}\label{eqn:RegToDRODDFixed}
\frac{1}{n}\sum_{i=1}^{n}F\bigl(x,\widehat{\xi}_{i}\bigr)\;+\;\varepsilon\,C
\;=\;
\sup_{\mathbb{Q}\in\mathcal{B}_{\varepsilon C}\!\left(F(x,\cdot)_{\#}\widehat{\mathbb{P}}_{n}\right)} \ \mathbb{E}_{\varsigma\sim\mathbb{Q}}[\varsigma]
\end{equation}
holds for any $x\in\mathcal{X}$. The proof of this deterministic result is a known property related to the dual representation of 1-Wasserstein distance on $\mathbb{R}$ and is deferred to the Appendix \ref{Sec:Appendix:ProofThmEquivalenceRegSAAvsWDRO}.

Now, we extend this to the case where $R(x)$ depends on the sample $\hat{\Xi}_n$. We fix a sample realization $\omega$ from the underlying probability space $\Omega^n$. For this fixed $\omega$, both the empirical measure $\widehat{\mathbb{P}}_{n}(\omega)$ and the regularizer $R(x)(\omega)$ are fixed entities. Let $\mu_\omega := F(x,\cdot)_{\#}\widehat{\mathbb{P}}_{n}(\omega)$ and $R_\omega := R(x)(\omega) > 0$. We can now apply the deterministic result \eqref{eqn:RegToDRODDFixed} using the fixed center $\mu_\omega$ and the fixed constant radius $C = R_\omega$. This yields:
\[
\frac{1}{n}\sum_{i=1}^{n}F\bigl(x,\widehat{\xi}_{i}(\omega)\bigr)\;+\;\varepsilon\,R_\omega
\;=\;
\sup_{\mathbb{Q}\in\mathcal{B}_{\varepsilon R_\omega}\!\left(\mu_\omega\right)} \ \mathbb{E}_{\varsigma\sim\mathbb{Q}}[\varsigma].
\]
Since this equality holds pointwise for almost every realization $\omega$, the identity \eqref{eqn:WDROFormSAARegGen_presec} holds almost surely. The equivalence of the minimization problems in \eqref{eqn:WDROFormSAARegGen_sec} follows by taking the minimum with respect to $x\in\mathcal{X}$ on both sides.
\end{proof}

We use the superscript "DD" in $\widehat{J}^{\mathrm{DD}}_{R,n}(\varepsilon)$ to emphasize that the ambiguity set in \eqref{eqn:WDROFormSAARegGen_sec} is \textit{decision-dependent}. This formulation contrasts with traditional WDRO models (such as those based on \eqref{DROGeneral_intro} with a fixed Wasserstein ball around $\widehat{\mathbb{P}}_n$). In standard WDRO, the ambiguity set typically comprises probability distributions on $\Xi$. In contrast, problem \eqref{eqn:WDROFormSAARegGen_sec} considers an ambiguity set of distributions on $\mathbb{R}$ (the space of outcomes of $F(x,\xi)$). Crucially, for each decision $x$, a distinct ambiguity set is defined, with both its center $F(x,\cdot)_{\#}\widehat{\mathbb{P}}_{n}$ and its radius $\varepsilon R(x)$ being functions of $x$.


\subsection{Theoretical Guarantees}
\label{subsec:TheoreticalGuarantees}

The equivalence established in Theorem \ref{Thm:EquivalenceRegSAAvsWDRO} allows us to leverage tools from DRO theory to analyze the properties of the general regularized SAA approach \eqref{eqn:SAARegGeneral_sec}. We now focus on establishing finite sample guarantees and asymptotic consistency for these methods.

\subsubsection{Preliminary Results}

We begin by noting a simple but useful observation that will streamline the analysis. Under the conditions of Theorem~\ref{Thm:EquivalenceRegSAAvsWDRO}, replacing the Wasserstein radius in \eqref{eqn:WDROFormSAARegGen_sec} by a state–dependent scaling $R(x)+\alpha$ (with any fixed $\alpha>0$) preserves the set of \emph{minimizers in $x$}. Thus, while the optimal objective values may differ in general, the argmin sets of the two programs coincide. For convenience, we therefore work with the argmin–equivalent formulation
\begin{equation}\label{eqn:WDROFormSAARegGenTranslac_sec}
\widehat{J}^{\mathrm{DD}}_{R+\alpha,n}(\varepsilon)
:=\min_{x\in\mathcal{X}} \ \sup_{\mathbb{Q}\in\mathcal{B}_{\varepsilon (R(x)+\alpha)}\!\left(F(x,\cdot)_{\#}\widehat{\mathbb{P}}_{n}\right)} 
\ \mathbb{E}_{\varsigma\sim\mathbb{Q}}[\varsigma],
\end{equation}
with the understanding that it yields the same optimal solutions $x^\star$ as \eqref{eqn:WDROFormSAARegGen_sec} even though the corresponding optimal values need not coincide.

Our objective is to analyze the behavior of \eqref{eqn:WDROFormSAARegGenTranslac_sec} as $n \to \infty$ and to determine whether an appropriate choice of $\varepsilon$ ensures that its optimal value and solutions converge to those of the true problem \eqref{eqn:SP_intro}. The following assumption regarding the Lipschitz continuity of $F$ with respect to $\xi$ is key.

\begin{assumption}[Lipschitz Continuity of $F$]\label{AssumptionPrincipal_sec}
We assume that $F(x,\cdot)$ is a Lipschitz function with respect to $\xi$ for each $x \in \mathcal{X}$. That is, there exists $\gamma_{x,F} < \infty$ such that for all $\xi_1,\xi_2\in\Xi$, we have $|F(x,\xi_1)-F(x,\xi_2)|\leq \gamma_{x,F}\left\|\xi_1-\xi_2\right\|$, where $\|\cdot\|$ is a norm on $\mathbb{R}^d$. 
\end{assumption}

The next lemma relates the Wasserstein distance between propagated distributions to the Wasserstein distance between the original distributions.

\begin{lemma}\label{lem:WassersteinPropagationBound_user}
Let $\alpha\geq 0$ such that $\gamma_{x,F} \leq R(x)+\alpha$ for all $x\in\mathcal{X}$ almost surely (assuming $R(x)$ could depend on the sample). If $\sup_{x\in \mathcal{X}} \gamma_{x,F}<\infty$, then for $p \geq 1$ it follows that
\[
W_{p}\bigl(F(x,\cdot)_{\#}\widehat{\mathbb{P}}_{n},F(x,\cdot)_{\#}\mathbb{P}\bigr) \leq (R(x)+\alpha) \, W_{p}(\widehat{\mathbb{P}}_{n},\mathbb{P}),
\]
where $W_{p}(\widehat{\mathbb{P}}_{n},\mathbb{P})$ uses ground cost $\mathbf{d}(\xi_1,\xi_2)=\|\xi_1-\xi_2\|$ and $W_{p}\bigl(F(x,\cdot)_{\#}\widehat{\mathbb{P}}_{n},F(x,\cdot)_{\#}\mathbb{P}\bigr)$ uses ground cost $\mathbf{d}(\varsigma_1,\varsigma_2)=|\varsigma_1-\varsigma_2|$.
\end{lemma}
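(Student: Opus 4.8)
The plan is to prove Lemma~\ref{lem:WassersteinPropagationBound_user} by exhibiting an explicit transport plan for the pushforward measures that is built from an optimal (or near-optimal) plan for the original measures, and then bounding its cost using the Lipschitz property of $F(x,\cdot)$.

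\textbf{Step 1: Fix $x$ and an optimal plan on $\Xi$.} Fix $x\in\mathcal{X}$ and a sample realization, so that $\gamma_{x,F}\le R(x)+\alpha<\infty$. Since $\widehat{\mathbb{P}}_n$ has finite support and $\mathbb{P}\in\mathcal{P}_p$, there exists an optimal coupling $\Pi^\star\in\mathcal{P}(\Xi\times\Xi)$ with marginals $\widehat{\mathbb{P}}_n$ and $\mathbb{P}$ such that $\int_{\Xi\times\Xi}\|\xi_1-\xi_2\|^p\,\Pi^\star(d\xi_1,d\xi_2)=W_p(\widehat{\mathbb{P}}_n,\mathbb{P})^p$ (for $p\in[1,\infty)$; the case $p=\infty$ is handled analogously with $\mathrm{ess\,sup}$ in place of the integral, or by passing to the limit).

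\textbf{Step 2: Push the plan forward through $F(x,\cdot)\otimes F(x,\cdot)$.} Define the map $T_x:\Xi\times\Xi\to\mathbb{R}\times\mathbb{R}$ by $T_x(\xi_1,\xi_2):=(F(x,\xi_1),F(x,\xi_2))$, and set $\widetilde{\Pi}:=(T_x)_{\#}\Pi^\star$. By construction, $\widetilde{\Pi}$ is a probability measure on $\mathbb{R}\times\mathbb{R}$ whose first marginal is $F(x,\cdot)_{\#}\widehat{\mathbb{P}}_n$ and whose second marginal is $F(x,\cdot)_{\#}\mathbb{P}$; hence $\widetilde{\Pi}$ is an admissible coupling for the Wasserstein problem between the propagated measures. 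Therefore $W_p\bigl(F(x,\cdot)_{\#}\widehat{\mathbb{P}}_n,\,F(x,\cdot)_{\#}\mathbb{P}\bigr)^p\le \int_{\mathbb{R}\times\mathbb{R}}|\varsigma_1-\varsigma_2|^p\,\widetilde{\Pi}(d\varsigma_1,d\varsigma_2)=\int_{\Xi\times\Xi}|F(x,\xi_1)-F(x,\xi_2)|^p\,\Pi^\star(d\xi_1,d\xi_2)$, where the last equality is the change-of-variables formula for pushforwards.

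\textbf{Step 3: Apply the Lipschitz bound and conclude.} By Assumption~\ref{AssumptionPrincipal_sec}, $|F(x,\xi_1)-F(x,\xi_2)|\le\gamma_{x,F}\|\xi_1-\xi_2\|\le(R(x)+\alpha)\|\xi_1-\xi_2\|$ for all $\xi_1,\xi_2$, so the integrand above is bounded by $(R(x)+\alpha)^p\|\xi_1-\xi_2\|^p$. Substituting and using optimality of $\Pi^\star$ gives $W_p\bigl(F(x,\cdot)_{\#}\widehat{\mathbb{P}}_n,F(x,\cdot)_{\#}\mathbb{P}\bigr)^p\le(R(x)+\alpha)^p\,W_p(\widehat{\mathbb{P}}_n,\mathbb{P})^p$, and taking $p$-th roots yields the claim. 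The main (mild) obstacle is bookkeeping rather than depth: one must ensure the pushforward coupling is genuinely admissible (marginal identities under composition of pushforwards), handle the $p=\infty$ case where the integral is replaced by an essential supremum, and confirm that an optimal plan exists or, failing that, run the argument with an $\epsilon$-optimal plan and let $\epsilon\downarrow0$; none of these is substantive, so the proof is short.
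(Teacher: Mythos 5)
Your proof is correct, and it takes a genuinely more direct route than the paper's. The paper proves the lemma by introducing a second, independent empirical measure $\widetilde{\mathbb{P}}_m$ approximating $\mathbb{P}$, writing both $W_p(\widehat{\mathbb{P}}_n,\widetilde{\mathbb{P}}_m)$ and $W_p(F(x,\cdot)_{\#}\widehat{\mathbb{P}}_n,F(x,\cdot)_{\#}\widetilde{\mathbb{P}}_m)$ as finite-dimensional transport LPs, bounding the discrete cost matrix entrywise via $|F(x,\widehat{\xi}_i)-F(x,\widetilde{\xi}_j)|^p\le\gamma_{x,F}^p\|\widehat{\xi}_i-\widetilde{\xi}_j\|^p$, and then passing to the limit $m\to\infty$ using the stability of $W_p$ under convergence of empirical measures (Villani, Cor.~6.11). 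You instead push an optimal (or $\epsilon$-optimal) coupling $\Pi^\star$ forward through $(\xi_1,\xi_2)\mapsto(F(x,\xi_1),F(x,\xi_2))$, verify the marginals, and bound the cost of the resulting admissible plan directly. Your argument is shorter, applies verbatim to any pair of measures in $\mathcal{P}_p$ (not just an empirical measure paired with $\mathbb{P}$), and avoids invoking both the a.s.\ $W_p$-convergence of empirical measures and the continuity of $W_p$, which the paper needs as auxiliary facts. The paper's route has the minor virtue of reducing everything to explicit finite LPs, but at the cost of an extra limiting step. Your attention to the admissibility of the pushforward coupling, the existence of an optimal plan (with the $\epsilon$-optimal fallback), and the $p=\infty$ modification covers the only points where care is needed.
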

\noindent The proof of this lemma is deferred to Appendix \ref{Apendice:Pruebalem:WassersteinPropagationBound_user}.

\noindent
Note that if $\gamma_{x,F} \leq R(x)$ for all $x\in\mathcal{X}$, then one may simply take $\alpha=0$. As a direct consequence of Lemma~\ref{lem:WassersteinPropagationBound_user} together with the argmin–equivalence remark above, we obtain the following probabilistic containment.

\begin{corollary}\label{cor:ProbRelationContainment}
Under the same conditions as Lemma \ref{lem:WassersteinPropagationBound_user} and Theorem \ref{Thm:EquivalenceRegSAAvsWDRO},
\[
\mathbb{P}^{n}\!\left(\forall x\in \mathcal{X},\:\: F(x,\cdot)_{\#}\mathbb{P}\in \mathcal{B}_{\varepsilon\cdot(R(x)+\alpha)}\!\left(F(x,\cdot)_{\#}\widehat{\mathbb{P}}_{n}\right)\right) 
\ \geq\
\mathbb{P}^{n}\!\left(\mathbb{P}\in \mathcal{B}_{\varepsilon}(\widehat{\mathbb{P}}_{n})\right).
\]
Here, the ball $\mathcal{B}_{\varepsilon\cdot(R(x)+\alpha)}\left(F(x,\cdot)_{\#}\widehat{\mathbb{P}}_{n}\right)$ uses the $p$-Wasserstein distance with ground cost $|\varsigma_1-\varsigma_2|$, and $\mathcal{B}_{\varepsilon}(\widehat{\mathbb{P}}_{n})$ uses the $p$-Wasserstein distance with ground cost $\|\xi_1-\xi_2\|$.
\end{corollary}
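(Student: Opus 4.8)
The plan is to read off the containment directly from the pathwise bound of Lemma~\ref{lem:WassersteinPropagationBound_user} and then invoke monotonicity of $\mathbb{P}^n$. First I would fix a sample realization $\omega\in\Omega^n$ lying in the full-measure event on which all standing hypotheses hold, in particular $\gamma_{x,F}\le R(x)(\omega)+\alpha$ for every $x\in\mathcal{X}$; every statement below is made on this set. For such $\omega$, Lemma~\ref{lem:WassersteinPropagationBound_user} applies and gives, for all $x\in\mathcal{X}$,
\[
W_{p}\bigl(F(x,\cdot)_{\#}\widehat{\mathbb{P}}_{n}(\omega),\,F(x,\cdot)_{\#}\mathbb{P}\bigr)\ \le\ \bigl(R(x)(\omega)+\alpha\bigr)\,W_{p}\bigl(\widehat{\mathbb{P}}_{n}(\omega),\mathbb{P}\bigr),
\]
with the ground costs $|\varsigma_1-\varsigma_2|$ on the left and $\|\xi_1-\xi_2\|$ on the right, exactly as in the statement.

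Next I would establish the event inclusion. Suppose in addition that $\omega$ lies in the event $\{\mathbb{P}\in\mathcal{B}_{\varepsilon}(\widehat{\mathbb{P}}_{n})\}$, i.e.\ $W_{p}(\widehat{\mathbb{P}}_{n}(\omega),\mathbb{P})\le\varepsilon$. Substituting this into the previous display and using $R(x)(\omega)+\alpha\ge 0$ yields, for every $x\in\mathcal{X}$,
\[
W_{p}\bigl(F(x,\cdot)_{\#}\widehat{\mathbb{P}}_{n}(\omega),\,F(x,\cdot)_{\#}\mathbb{P}\bigr)\ \le\ \varepsilon\,\bigl(R(x)(\omega)+\alpha\bigr),
\]
which is precisely $F(x,\cdot)_{\#}\mathbb{P}\in\mathcal{B}_{\varepsilon(R(x)+\alpha)}\bigl(F(x,\cdot)_{\#}\widehat{\mathbb{P}}_{n}\bigr)$ at $\omega$. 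Since this holds simultaneously for all $x$, we obtain, up to the discarded null set,
\[
\bigl\{\mathbb{P}\in\mathcal{B}_{\varepsilon}(\widehat{\mathbb{P}}_{n})\bigr\}\ \subseteq\ \bigl\{\forall x\in\mathcal{X}:\ F(x,\cdot)_{\#}\mathbb{P}\in\mathcal{B}_{\varepsilon(R(x)+\alpha)}\bigl(F(x,\cdot)_{\#}\widehat{\mathbb{P}}_{n}\bigr)\bigr\}.
\]
Taking $\mathbb{P}^n$ of both sides, and noting that a null set does not affect the inequality, gives the claimed bound. The hypotheses of Theorem~\ref{Thm:EquivalenceRegSAAvsWDRO} are not needed for this chain of inequalities as such; they enter only to certify that the decision-dependent balls appearing on the right are the ones attached to the equivalent WDRO reformulation \eqref{eqn:WDROFormSAARegGenTranslac_sec}, and the shift by $\alpha$ is the one coming from the argmin-equivalence remark, so no extra work is required there.

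The only genuinely delicate point is measurability: the right-hand event is an intersection over the possibly uncountable index set $\mathcal{X}$ and so need not be Borel a priori. I would handle this either by observing that, under the Lipschitz/continuity assumptions already in force, $x\mapsto W_p(F(x,\cdot)_{\#}\widehat{\mathbb{P}}_{n},F(x,\cdot)_{\#}\mathbb{P})$ and $x\mapsto R(x)$ have versions that are (semi)continuous in $x$, so the intersection collapses to one over a countable dense subset of $\mathcal{X}$ and is therefore measurable; or, avoiding such regularity, by reading $\mathbb{P}^n$ of the right-hand event as an inner probability, for which monotonicity still yields the stated inequality. Everything beyond this is an immediate consequence of Lemma~\ref{lem:WassersteinPropagationBound_user}.
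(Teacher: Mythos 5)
Your proof is correct and is exactly the argument the paper intends: the paper states the corollary without proof as a "direct consequence" of Lemma~\ref{lem:WassersteinPropagationBound_user}, and your pathwise event inclusion followed by monotonicity of $\mathbb{P}^{n}$ is that argument spelled out. The measurability remark is a reasonable extra precaution but goes beyond what the paper addresses.
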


The Kantorovich–Rubinstein duality theorem provides a fundamental connection for the 1-Wasserstein distance.

\begin{theorem}[Kantorovich–Rubinstein Duality]\label{Thm:KantorovichRubinsteinDD_sec}
For any $x\in\mathcal{X}$ and any probability measure $\mathbb{Q}$ on $F(x,\Xi)$, the 1-Wasserstein distance is given by
\[
W_{1}\left(\mathbb{Q},F(x,\cdot)_{\#}\mathbb{P}\right) = \sup_{f: F(x,\Xi) \to \mathbb{R}, \|f\|_{\mathrm{Lip}} \le 1} \left\{ \int_{F(x,\Xi)} f(\varsigma)\,\mathbb{Q}(d\varsigma) - \int_{F(x,\Xi)} f(\varsigma)\,(F(x,\cdot)_{\#}\mathbb{P})(d\varsigma) \right\},
\]
where $\|f\|_{\mathrm{Lip}} \le 1$ means $f$ is 1-Lipschitz, i.e., $|f(\varsigma_1)-f(\varsigma_2)| \le |\varsigma_1-\varsigma_2|$ for all $\varsigma_1,\varsigma_2 \in F(x,\Xi)$.
\end{theorem}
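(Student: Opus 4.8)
The plan is to prove the two inequalities separately, exploiting that all measures involved live on (an interval of) $\mathbb{R}$, so the one-dimensional structure of optimal transport is available directly. Write $\mu := \mathbb{Q}$ and $\nu := F(x,\cdot)_{\#}\mathbb{P}$, both probability measures on $I := F(x,\Xi)$, which is an interval by hypothesis; finiteness of $W_1(\mu,\nu)$ forces both to have finite first moments, which I assume throughout. Since a $1$-Lipschitz function on $I$ extends to a $1$-Lipschitz function on $\mathbb{R}$ without changing its values on $I$ (and restriction preserves the Lipschitz bound), the supremum over $1$-Lipschitz $f:I\to\mathbb{R}$ equals the supremum over $1$-Lipschitz $f:\mathbb{R}\to\mathbb{R}$, so it is harmless to work on $\mathbb{R}$. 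For the easy inequality (the supremum is a lower bound), for any coupling $\Pi$ of $\mu$ and $\nu$ and any $1$-Lipschitz $f$,
\[
\int f\,d\mu-\int f\,d\nu=\int_{I\times I}\bigl(f(\varsigma_1)-f(\varsigma_2)\bigr)\,\Pi(d\varsigma_1,d\varsigma_2)\le\int_{I\times I}|\varsigma_1-\varsigma_2|\,\Pi(d\varsigma_1,d\varsigma_2);
\]
taking the infimum over $\Pi$ on the right and the supremum over $f$ on the left gives $\sup_{\|f\|_{\mathrm{Lip}}\le1}\bigl(\int f\,d\mu-\int f\,d\nu\bigr)\le W_1(\mu,\nu)$.

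For the reverse inequality I would use the explicit one-dimensional description of $W_1$. Recall that, because $\mathbf{d}(\varsigma_1,\varsigma_2)=|\varsigma_1-\varsigma_2|$ is convex in the increment, the comonotone (quantile) coupling is optimal, so with $F_\mu,F_\nu$ the cumulative distribution functions and $F_\mu^{-1},F_\nu^{-1}$ their quantile functions,
\[
W_1(\mu,\nu)=\int_0^1\bigl|F_\mu^{-1}(u)-F_\nu^{-1}(u)\bigr|\,du=\int_{\mathbb{R}}\bigl|F_\mu(t)-F_\nu(t)\bigr|\,dt,
\]
the second equality being the ``area between the curves computed two ways'' identity. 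Then define $f_\star(t):=\int_{t_0}^t\mathrm{sign}\bigl(F_\nu(s)-F_\mu(s)\bigr)\,ds$ for an arbitrary base point $t_0$; it is $1$-Lipschitz. An integration by parts (Fubini applied to the tails of $\mu$ and $\nu$, with boundary terms vanishing by the finite first moments) yields, for every Lipschitz $f$, the identity $\int f\,d\mu-\int f\,d\nu=-\int_{\mathbb{R}}f'(t)\bigl(F_\mu(t)-F_\nu(t)\bigr)\,dt$; applied to $f_\star$ this produces exactly $\int_{\mathbb{R}}|F_\mu(t)-F_\nu(t)|\,dt=W_1(\mu,\nu)$, so the supremum is attained and equals $W_1(\mu,\nu)$. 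Combining the two directions gives the claim. Alternatively, one may deduce the statement from the general Kantorovich duality $W_1=\sup\{\int\phi\,d\mu+\int\psi\,d\nu:\phi(\varsigma_1)+\psi(\varsigma_2)\le|\varsigma_1-\varsigma_2|\}$ by passing to $c$-transforms: since the cost is a metric, replacing $\psi$ by $\phi^c(\varsigma_2)=\inf_{\varsigma_1}(|\varsigma_1-\varsigma_2|-\phi(\varsigma_1))$ and then $\phi$ by $\phi^{cc}$ can only increase the objective and lands on a pair of the form $(f,-f)$ with $f$ being $1$-Lipschitz.

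The main obstacle is the reverse inequality, and inside it the two analytic facts that are short to state but need care: (a) the identity $W_1(\mu,\nu)=\int_{\mathbb{R}}|F_\mu-F_\nu|\,dt$, which rests on the optimality of the monotone rearrangement coupling for the cost $|x-y|$ — a standard but nontrivial rearrangement argument; and (b) the integration-by-parts formula, where one must verify that the contributions at $\pm\infty$ vanish, which is precisely where the finite-first-moment hypothesis enters. If instead one cites the general Kantorovich–Rubinstein theorem (e.g.\ from \cite{Villani2008}), the obstacle shifts to the measure-theoretic content of that theorem (lower semicontinuity of the cost, tightness of the set of couplings, and the minimax/Hahn–Banach step), with the reduction to $1$-Lipschitz potentials becoming a purely algebraic $c$-transform computation. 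Either way, the one-dimensionality of $F(x,\Xi)$ makes the cdf-based argument the shortest self-contained route.
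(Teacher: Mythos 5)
The paper does not prove this statement at all: it is presented as the classical Kantorovich--Rubinstein duality theorem and used as a black box (the general reference for Wasserstein distances in the paper is \cite{Villani2008}), so there is no in-paper argument to compare against. Your proposal supplies a correct, essentially self-contained proof that exploits the one-dimensionality of $F(x,\Xi)$, which is a legitimately different (and more elementary) route than invoking the general duality theorem. The easy direction via couplings is exactly right. For the hard direction, your construction of the explicit optimal potential $f_\star(t)=\int_{t_0}^{t}\mathrm{sign}\bigl(F_\nu(s)-F_\mu(s)\bigr)\,ds$ together with the Fubini/integration-by-parts identity $\int f\,d\mu-\int f\,d\nu=-\int_{\mathbb{R}}f'(t)\bigl(F_\mu(t)-F_\nu(t)\bigr)\,dt$ is sound: the tail terms vanish under finite first moments, and $|f_\star'|\le 1$ a.e.\ gives the Lipschitz bound. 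The two facts you import --- optimality of the comonotone coupling for the cost $|\varsigma_1-\varsigma_2|$ and the identity $\int_0^1|F_\mu^{-1}-F_\nu^{-1}|\,du=\int_{\mathbb{R}}|F_\mu-F_\nu|\,dt$ --- are standard, and you correctly flag them as the nontrivial inputs. Your reduction of the supremum over $1$-Lipschitz $f:F(x,\Xi)\to\mathbb{R}$ to the supremum over $1$-Lipschitz $f:\mathbb{R}\to\mathbb{R}$ via Lipschitz extension is also needed and correctly handled. What your approach buys is an explicit maximizer and a proof that avoids the minimax/Hahn--Banach machinery of the general Kantorovich duality; what it costs is that it only works on the line, which is all the paper needs since the pushforward measures live on $F(x,\Xi)\subseteq\mathbb{R}$. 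The only loose end is the case $W_1=\infty$, which you set aside by assumption; in the paper's usage the relevant measures have finite first moments, so this is harmless.
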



\subsubsection{Finite Sample Guarantees}

In this section, let us recall that originally in \eqref{eq:SBR-SAA_sec} we noted the regularizer as $\widehat{R}_{m}$, where by definition $\widehat{R}_{m}$ depends on $m$ scenarios. These scenarios may or may not depend on the sample, depending on our purposes. For this reason, the previous results (Theorem \ref{Thm:EquivalenceRegSAAvsWDRO} -- Corollary \ref{cor:ProbRelationContainment}) focus on a general regularizer $R$ that may or may not depend on the sample. This general treatment was deliberate, as it covers the forms that $\widehat{R}_{m}$ can take. We now proceed by specializing this general framework to our specific regularizer $\widehat{R}_{m}(x)$ from \eqref{eqn:RegTerm_sec}.

To establish finite sample guarantees, we introduce further assumptions, inspired by \cite{MohajerinEsfahani2018}.

\begin{assumption}\label{Assumption1FiniteGarant_sec}
There exists $a>1$ such that
\[
A := \mathbb{E}_{\xi\sim\mathbb{P}}[\exp(\|\xi\|^{a})] = \int_\Xi \exp(\|\xi\|^{a}) \mathbb{P}(d\xi) < \infty.
\]
\end{assumption}

\begin{assumption}\label{Assumption2FiniteGarant_sec}
The function $F$ satisfies $\sup_{x\in \mathcal{X}} \gamma_{x,F}< \infty$. For all $x\in\mathcal{X}$, $F(x,\Xi)$ is an interval with $\sup_{\xi\in\Xi}F(x,\xi)=\infty$. Moreover, there exists $\alpha \ge 0$ such that $\gamma_{x,F} \leq \widehat{R}_{m}(x)+\alpha$ for all $x\in\mathcal{X}$ almost surely.
\end{assumption}

Under these assumptions, we can characterize the radius $\varepsilon_n$ such that the true propagated distribution $F(x,\cdot)_{\#}\mathbb{P}$ lies within the decision-dependent Wasserstein ball around $F(x,\cdot)_{\#}\widehat{\mathbb{P}}_{n}$ with high probability.

\begin{lemma}\label{lemma:EpsilonCharacterizaion_sec}
If Assumptions \ref{Assumption1FiniteGarant_sec} and \ref{Assumption2FiniteGarant_sec} hold, and if we choose $\beta\in(0,1)$, then there exists positive constants $c_{1}$ and $c_{2}$ that only depend on $a$, $A$ and $d$ such that if we set
\[
\varepsilon_n(\beta) := 
\begin{cases}
\left(\frac{\log(c_1 \beta^{-1})}{c_2 n}\right)^{1/\max\{d,2\}} & \text{if } n \ge \frac{\log(c_1 \beta^{-1})}{c_2}, \\[10pt]
\left(\frac{\log(c_1 \beta^{-1})}{c_2 n}\right)^{1/a} & \text{if } n < \frac{\log(c_1 \beta^{-1})}{c_2},
\end{cases}
\]
then
\[
\mathbb{P}^{n}\left(\forall x\in \mathcal{X},\:\: F(x,\cdot)_{\#}\mathbb{P}\in \mathcal{B}_{\varepsilon_n(\beta)\cdot(\widehat{R}_{m}(x)+\alpha)}\left(F(x,\cdot)_{\#}\widehat{\mathbb{P}}_{n}\right) \right) > 1-\beta.
\]
\end{lemma}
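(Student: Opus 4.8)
My plan is to obtain Lemma~\ref{lemma:EpsilonCharacterizaion_sec} as a direct combination of the decision-dependent containment estimate of Corollary~\ref{cor:ProbRelationContainment} with the classical Fournier--Guillin measure-concentration bound for the empirical measure, in the packaged form used in \cite{MohajerinEsfahani2018}. First I would check that Assumption~\ref{Assumption2FiniteGarant_sec} is exactly what is needed to apply Theorem~\ref{Thm:EquivalenceRegSAAvsWDRO}, Lemma~\ref{lem:WassersteinPropagationBound_user}, and hence Corollary~\ref{cor:ProbRelationContainment} with the generic regularizer $R$ taken to be our $\widehat{R}_m$: for every $x\in\mathcal{X}$, $F(x,\Xi)$ is an interval with $\sup_{\xi\in\Xi}F(x,\xi)=\infty$, we have $\sup_{x\in\mathcal{X}}\gamma_{x,F}<\infty$, and $\gamma_{x,F}\le\widehat{R}_m(x)+\alpha$ for all $x$ almost surely; the last condition is where the (possibly sample-dependent) nature of $\widehat{R}_m$ causes no difficulty, since Lemma~\ref{lem:WassersteinPropagationBound_user} already permits $R$ to depend on the sample. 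Corollary~\ref{cor:ProbRelationContainment} then yields, for any $\varepsilon>0$,
\[
\mathbb{P}^{n}\!\left(\forall x\in\mathcal{X},\ F(x,\cdot)_{\#}\mathbb{P}\in\mathcal{B}_{\varepsilon(\widehat{R}_m(x)+\alpha)}\!\bigl(F(x,\cdot)_{\#}\widehat{\mathbb{P}}_{n}\bigr)\right)\ \ge\ \mathbb{P}^{n}\!\bigl(\mathbb{P}\in\mathcal{B}_{\varepsilon}(\widehat{\mathbb{P}}_{n})\bigr),
\]
so it remains to exhibit $\varepsilon=\varepsilon_n(\beta)$ for which the right-hand side is at least $1-\beta$.

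For that step I would invoke the Fournier--Guillin concentration inequality for $W_1$: under Assumption~\ref{Assumption1FiniteGarant_sec} there exist $c_1,c_2>0$ depending only on $a$, $A$ and $d$ (and, enlarging $c_1$ if necessary, we may take $c_1\ge 1$) such that
\[
\mathbb{P}^{n}\!\bigl(W_1(\mathbb{P},\widehat{\mathbb{P}}_{n})\ge\varepsilon\bigr)\ \le\
\begin{cases}
c_1\exp\!\bigl(-c_2\,n\,\varepsilon^{\max\{d,2\}}\bigr), & 0<\varepsilon\le 1,\\[4pt]
c_1\exp\!\bigl(-c_2\,n\,\varepsilon^{a}\bigr), & \varepsilon> 1,
\end{cases}
\]
where $W_1$ uses the ground cost $\|\cdot\|$ (any other norm on $\mathbb{R}^d$ only rescales $c_1,c_2$ by a $d$-dependent factor, leaving the stated dependence intact). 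Setting the right-hand side equal to $\beta$ and solving for $\varepsilon$ gives, in the first branch, $\varepsilon=\bigl(\log(c_1\beta^{-1})/(c_2 n)\bigr)^{1/\max\{d,2\}}$, which is $\le 1$ precisely when $n\ge\log(c_1\beta^{-1})/c_2$ (consistently with the branch's validity range); and in the second branch, $\varepsilon=\bigl(\log(c_1\beta^{-1})/(c_2 n)\bigr)^{1/a}$, which is $>1$ precisely when $n<\log(c_1\beta^{-1})/c_2$. These are exactly the two cases defining $\varepsilon_n(\beta)$, and in each the inequality gives $\mathbb{P}^{n}\bigl(W_1(\mathbb{P},\widehat{\mathbb{P}}_{n})\ge\varepsilon_n(\beta)\bigr)\le\beta$, hence $\mathbb{P}^{n}\bigl(\mathbb{P}\in\mathcal{B}_{\varepsilon_n(\beta)}(\widehat{\mathbb{P}}_{n})\bigr)\ge 1-\beta$.

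Chaining the two displays yields the assertion; the strict inequality $>1-\beta$ in the statement is recovered by the usual infinitesimal bookkeeping (e.g.\ using the version of the bound with open balls, or applying it at a confidence level marginally tighter than $\beta$), which does not alter the form of $\varepsilon_n(\beta)$. I do not anticipate a substantive obstacle: the lemma is, in essence, the Esfahani--Kuhn finite-sample radius of \cite{MohajerinEsfahani2018} transported through the Lipschitz push-forward inequality of Lemma~\ref{lem:WassersteinPropagationBound_user}. The points requiring genuine care are the bookkeeping ones — verifying that every hypothesis of Corollary~\ref{cor:ProbRelationContainment} is literally supplied by Assumptions~\ref{Assumption1FiniteGarant_sec}--\ref{Assumption2FiniteGarant_sec}, in particular the almost-sure inequality $\gamma_{x,F}\le\widehat{R}_m(x)+\alpha$ when the regularizing scenarios are themselves data-dependent, and checking that the two branches of $\varepsilon_n(\beta)$ fall in the $\varepsilon\le 1$ and $\varepsilon>1$ regimes respectively, so that the correct branch of the concentration bound is being inverted.
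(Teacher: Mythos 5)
Your proposal follows exactly the same route as the paper's proof: reduce to containment of the original measures via Corollary~\ref{cor:ProbRelationContainment} with $R=\widehat{R}_m$, then invoke the measure-concentration bound of Theorem~3.4 in \cite{MohajerinEsfahani2018} (the packaged Fournier--Guillin inequality) and invert it to obtain the two-branch formula for $\varepsilon_n(\beta)$. The only difference is that you unpack the concentration inequality and the branch conditions explicitly, which the paper leaves as a citation; the argument is correct.
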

\begin{proof}
This lemma is a direct consequence of Corollary \ref{cor:ProbRelationContainment} (with $R=\widehat{R}_{m}$), which relates the containment of the propagated measures to the containment of the original measures, and Theorem 3.4 in \citep{MohajerinEsfahani2018}, which provides the convergence rate $\varepsilon_n(\beta)$ for $W_p(\widehat{\mathbb{P}}_n, \mathbb{P})$.
\end{proof}

An immediate consequence is:

\begin{corollary}\label{corolary:ContenencyBall_sec}
If Assumptions \ref{Assumption1FiniteGarant_sec} and \ref{Assumption2FiniteGarant_sec} hold, and if $\beta\in(0,1)$ and $\varepsilon_n(\beta)$ is chosen as in Lemma \ref{lemma:EpsilonCharacterizaion_sec}, then for any optimal solution $\hat{x}_{m,n}$ of \eqref{eqn:SAARegGeneral_sec} (with $R=\widehat{R}_m$ and $\varepsilon=\varepsilon_n(\beta)$), we have 
\[
\mathbb{P}^{n}\left(F(\hat{x}_{m,n},\cdot)_{\#}\mathbb{P}\in \mathcal{B}_{\varepsilon_n(\beta)\cdot(\widehat{R}_{m}(\hat{x}_{m,n})+\alpha)}\left(F(\hat{x}_{m,n},\cdot)_{\#}\widehat{\mathbb{P}}_{n}\right) \right) > 1-\beta.
\]
\end{corollary}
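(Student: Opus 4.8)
The plan is to obtain this corollary as an immediate specialization of the uniform-in-$x$ guarantee already proved in Lemma~\ref{lemma:EpsilonCharacterizaion_sec}. First I would recall that, under Assumptions~\ref{Assumption1FiniteGarant_sec} and~\ref{Assumption2FiniteGarant_sec} and with $\varepsilon=\varepsilon_n(\beta)$ chosen exactly as in that lemma, the event
\[
E := \Big\{\forall x\in\mathcal{X}:\ F(x,\cdot)_{\#}\mathbb{P}\in\mathcal{B}_{\varepsilon_n(\beta)\,(\widehat{R}_m(x)+\alpha)}\big(F(x,\cdot)_{\#}\widehat{\mathbb{P}}_n\big)\Big\}
\]
satisfies $\mathbb{P}^n(E)>1-\beta$. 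Since these are precisely the hypotheses of the corollary, the whole argument reduces to instantiating the universally quantified $x$ at the (sample-dependent) point $\hat{x}_{m,n}$.

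Next I would use that any optimal solution $\hat{x}_{m,n}$ of \eqref{eqn:SAARegGeneral_sec} (with $R=\widehat{R}_m$, $\varepsilon=\varepsilon_n(\beta)$) is by definition feasible, i.e.\ $\hat{x}_{m,n}(\omega)\in\mathcal{X}$ for every realization $\omega$. Hence, on the event $E$, applying the containment statement to the particular choice $x=\hat{x}_{m,n}(\omega)$ yields
\[
F(\hat{x}_{m,n},\cdot)_{\#}\mathbb{P}\in\mathcal{B}_{\varepsilon_n(\beta)\,(\widehat{R}_m(\hat{x}_{m,n})+\alpha)}\big(F(\hat{x}_{m,n},\cdot)_{\#}\widehat{\mathbb{P}}_n\big).
\]
In other words, $E$ is contained in the event appearing in the statement of the corollary, so monotonicity of $\mathbb{P}^n$ immediately gives the claimed bound $>1-\beta$.

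The only point requiring a word of care — and the closest thing to an obstacle in what is otherwise a one-line argument — is measurability: one should fix a measurable selector $\hat{x}_{m,n}$ of the argmin map (which exists under the standing assumptions, e.g.\ by a measurable selection theorem, or by restricting to settings where \eqref{eqn:SAARegGeneral_sec} admits a measurable minimizer), so that the event in the corollary is genuinely $\mathbb{P}^n$-measurable. With such a selector the set inclusion $E\subseteq\{F(\hat{x}_{m,n},\cdot)_{\#}\mathbb{P}\in\mathcal{B}_{\varepsilon_n(\beta)(\widehat{R}_m(\hat{x}_{m,n})+\alpha)}(F(\hat{x}_{m,n},\cdot)_{\#}\widehat{\mathbb{P}}_n)\}$ is immediate and the proof is complete; if one prefers to avoid selection arguments, the inclusion together with $\mathbb{P}^n(E)>1-\beta$ still yields the stated bound for the inner measure, which is enough for the downstream consistency results.
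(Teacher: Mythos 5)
Your proof is correct and follows essentially the same route as the paper: instantiate the uniform-in-$x$ event from Lemma~\ref{lemma:EpsilonCharacterizaion_sec} at $\hat{x}_{m,n}\in\mathcal{X}$ and use monotonicity of $\mathbb{P}^n$ on the resulting inclusion (you state the inclusion in the correct direction, whereas the paper's prose inadvertently reverses it). Your added remark on fixing a measurable selector of the argmin is a careful touch the paper omits, but it does not change the argument.
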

\begin{proof}
The event in the probability statement is a subset of the event $\{\forall x\in \mathcal{X},\:\: F(x,\cdot)_{\#}\mathbb{P}\in \mathcal{B}_{\varepsilon_n(\beta)\cdot(\widehat{R}_{m}(x)+\alpha)}\left(F(x,\cdot)_{\#}\widehat{\mathbb{P}}_{n}\right) \}$. The result follows directly from Lemma \ref{lemma:EpsilonCharacterizaion_sec}.
\end{proof}

We now establish the finite sample guarantee, which provides a probabilistic upper bound on the out-of-sample performance of the solution obtained from our regularized problem.

\begin{theorem} [Finite sample guarantees]\label{Thm:IneuqlityProbaOptimalValueAdv}
Assume that Assumptions \ref{Assumption1FiniteGarant_sec} and \ref{Assumption2FiniteGarant_sec} hold. Consider $\beta\in(0,1)$ and let $\varepsilon_n(\beta)$ be as in Lemma \ref{lemma:EpsilonCharacterizaion_sec}. If $\widehat{J}_{m,n}^{\mathrm{DD}}$ and $\hat{x}_{m,n}^{\mathrm{DD}}$ denote the optimal value and an optimal solution to \eqref{eqn:WDROFormSAARegGen_sec} with $R=\widehat{R}_{m}$ and $\varepsilon= \varepsilon_n(\beta)$ (thus, by Theorem \ref{Thm:EquivalenceRegSAAvsWDRO}, $\hat{x}_{m,n}^{\mathrm{DD}}$ is also an optimal solution of \eqref{eqn:SAARegGeneral_sec}), then
\[
\mathbb{P}^{n}\left( \mathbb{E}_{\xi\sim\mathbb{P}}\left[F\left(\hat{x}_{m,n}^{\mathrm{DD}},\xi\right)\right] \leq \widehat{J}_{m,n}^{\mathrm{DD}} + \varepsilon_n(\beta)\alpha \right) > 1-\beta.
\]
\end{theorem}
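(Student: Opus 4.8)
The plan is to condition on the high-probability containment event provided by Corollary~\ref{corolary:ContenencyBall_sec}, rewrite the out-of-sample expectation as the mean of the propagated distribution $F(\hat x,\cdot)_{\#}\mathbb{P}$, bound it by the worst-case mean over the decision-dependent Wasserstein ball that contains it, and finally evaluate that worst case in closed form through the equivalence in Theorem~\ref{Thm:EquivalenceRegSAAvsWDRO}. Write $\hat x := \hat x_{m,n}^{\mathrm{DD}}$. By Theorem~\ref{Thm:EquivalenceRegSAAvsWDRO}, $\hat x$ is also an optimal solution of~\eqref{eqn:SAARegGeneral_sec} with $R=\widehat R_m$ and $\varepsilon=\varepsilon_n(\beta)$, so Corollary~\ref{corolary:ContenencyBall_sec} applies to it: on an event $\mathcal{E}$ with $\mathbb{P}^{n}(\mathcal{E})>1-\beta$ we have $F(\hat x,\cdot)_{\#}\mathbb{P}\in\mathcal{B}_{\varepsilon_n(\beta)(\widehat R_m(\hat x)+\alpha)}\bigl(F(\hat x,\cdot)_{\#}\widehat{\mathbb{P}}_n\bigr)$, the ball taken in the $p$-Wasserstein distance on $\mathbb{R}$ with ground cost $|\cdot|$.

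Next, since $\mathbb{E}_{\xi\sim\mathbb{P}}[F(\hat x,\xi)]=\mathbb{E}_{\varsigma\sim F(\hat x,\cdot)_{\#}\mathbb{P}}[\varsigma]$ by definition of the push-forward (finite under Assumptions~\ref{Assumption1FiniteGarant_sec} and~\ref{Assumption2FiniteGarant_sec}, which make $F(\hat x,\cdot)$ Lipschitz and $\mathbb{E}[\|\xi\|]$ finite), the membership from the previous step gives, on $\mathcal{E}$,
\[
\mathbb{E}_{\xi\sim\mathbb{P}}[F(\hat x,\xi)]\;\le\;\sup_{\mathbb{Q}\in\mathcal{B}_{\varepsilon_n(\beta)(\widehat R_m(\hat x)+\alpha)}(F(\hat x,\cdot)_{\#}\widehat{\mathbb{P}}_n)}\mathbb{E}_{\varsigma\sim\mathbb{Q}}[\varsigma].
\]
I then invoke the pointwise identity~\eqref{eqn:WDROFormSAARegGen_presec} of Theorem~\ref{Thm:EquivalenceRegSAAvsWDRO} with the nonnegative regularizer $\widehat R_m+\alpha$ in place of $R$ — its hypotheses hold at every $x$ by Assumption~\ref{Assumption2FiniteGarant_sec}, and the zero-radius case (relevant only if $\alpha=0$ and $\widehat R_m(\hat x)=0$, where the ball degenerates to a singleton) is immediate — to obtain
\[
\sup_{\mathbb{Q}\in\mathcal{B}_{\varepsilon_n(\beta)(\widehat R_m(\hat x)+\alpha)}(F(\hat x,\cdot)_{\#}\widehat{\mathbb{P}}_n)}\mathbb{E}_{\varsigma\sim\mathbb{Q}}[\varsigma]
=\frac1n\sum_{i=1}^n F(\hat x,\widehat\xi_i)+\varepsilon_n(\beta)\bigl(\widehat R_m(\hat x)+\alpha\bigr).
\]

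To finish, apply Theorem~\ref{Thm:EquivalenceRegSAAvsWDRO} with $R=\widehat R_m$: the optimal value $\widehat J_{m,n}^{\mathrm{DD}}$ equals $\min_{x\in\mathcal{X}}\bigl[\tfrac1n\sum_i F(x,\widehat\xi_i)+\varepsilon_n(\beta)\widehat R_m(x)\bigr]$, attained at $\hat x$, so $\tfrac1n\sum_i F(\hat x,\widehat\xi_i)+\varepsilon_n(\beta)\widehat R_m(\hat x)=\widehat J_{m,n}^{\mathrm{DD}}$. Chaining the two displays above yields $\mathbb{E}_{\xi\sim\mathbb{P}}[F(\hat x,\xi)]\le\widehat J_{m,n}^{\mathrm{DD}}+\varepsilon_n(\beta)\alpha$ on $\mathcal{E}$, and since $\mathbb{P}^{n}(\mathcal{E})>1-\beta$ the claimed bound follows.

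The main obstacle is essentially bookkeeping rather than depth: all the analytic substance already resides in Theorem~\ref{Thm:EquivalenceRegSAAvsWDRO} and in the concentration rate behind Corollary~\ref{corolary:ContenencyBall_sec}. The point requiring care is the consistent handling of the two inflation factors — the containment holds for the ball scaled by $\widehat R_m(\hat x)+\alpha$, while $\widehat J_{m,n}^{\mathrm{DD}}$ is defined with scaling $\widehat R_m(\hat x)$ alone — and recognizing that their discrepancy is precisely the additive slack $\varepsilon_n(\beta)\alpha$. One must also verify that the equivalence theorem is legitimately applied with the shifted regularizer $\widehat R_m+\alpha$ (nonnegativity together with the interval and unboundedness conditions, granted by Assumption~\ref{Assumption2FiniteGarant_sec}), that the identity~\eqref{eqn:WDROFormSAARegGen_presec} may be evaluated at the data-dependent point $\hat x$ (it holds for every sample realization and all $x$ when the radius is positive, so no measurability issue arises), and that $\mathbb{E}_{\xi\sim\mathbb{P}}[F(\hat x,\xi)]$ is finite so the inequality is meaningful.
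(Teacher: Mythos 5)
Your proof is correct and follows essentially the same route as the paper's: condition on the containment event from Corollary~\ref{corolary:ContenencyBall_sec}, bound the push-forward mean by the worst case over the inflated ball, and evaluate that supremum via the identity in Theorem~\ref{Thm:EquivalenceRegSAAvsWDRO}. The only difference is that you spell out explicitly the bookkeeping behind the paper's final equality (applying \eqref{eqn:WDROFormSAARegGen_presec} with regularizer $\widehat R_m+\alpha$ and then subtracting off the $\varepsilon_n(\beta)\widehat R_m(\hat x)$ part to recover $\widehat J_{m,n}^{\mathrm{DD}}$), which the paper leaves implicit.
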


\begin{proof}
By definition, $\mathbb{E}_{\xi\sim\mathbb{P}}\left[F\left(\hat{x}_{m,n}^{\mathrm{DD}},\xi\right)\right]=\mathbb{E}_{\varsigma\sim F\left(\hat{x}_{m,n}^{\mathrm{DD}},\cdot\right)_{\#}\mathbb{P}}[\varsigma]$. Let $\mathcal{E}$ be the event from Corollary \ref{corolary:ContenencyBall_sec}, which occurs with probability $> 1-\beta$. On this event, the true propagated measure $F(\hat{x}_{m,n}^{\mathrm{DD}},\cdot)_{\#}\mathbb{P}$ is contained in the ambiguity set $\mathcal{B}_{\varepsilon_n(\beta)\cdot(\widehat{R}_{m}(\hat{x}_{m,n}^{\mathrm{DD}})+\alpha)}\left(F(\hat{x}_{m,n}^{\mathrm{DD}},\cdot)_{\#}\widehat{\mathbb{P}}_{n}\right)$.
Therefore, on this event $\mathcal{E}$, we have:
\begin{align*}
\mathbb{E}_{\xi\sim\mathbb{P}}\left[F\left(\hat{x}_{m,n}^{\mathrm{DD}},\xi\right)\right] &= \mathbb{E}_{\varsigma\sim F\left(\hat{x}_{m,n}^{\mathrm{DD}},\cdot\right)_{\#}\mathbb{P}}[\varsigma] \\
&\leq {\displaystyle\sup_{\mathbb{Q}\in\mathcal{B}_{\varepsilon_n(\beta) \left(\widehat{R}_{m}\left(\hat{x}_{m,n}^{\mathrm{DD}}\right)+\alpha\right)}\left(F(\hat{x}_{m,n}^{\mathrm{DD}},\cdot)_{\#}\widehat{\mathbb{P}}_{n}\right) } \mathbb{E}_{\varsigma\sim\mathbb{Q}}[\varsigma] } \\
&= \widehat{J}_{m,n}^{\mathrm{DD}}+\varepsilon_{n}(\beta)\alpha.
\end{align*}
The last equality follows from Theorem \ref{Thm:EquivalenceRegSAAvsWDRO}. Since this inequality holds on the event $\mathcal{E}$, the theorem is proven.
\end{proof}


\subsubsection{Asymptotic Consistency}

We now turn to the asymptotic behavior of the proposed method as the sample size $n \to \infty$.
\begin{lemma}[Convergence of Distributions]\label{lem:ConvergenceOfdistributions_sec}
If Assumptions \ref{Assumption1FiniteGarant_sec} and \ref{Assumption2FiniteGarant_sec} hold, and $\widehat{R}_{m}(x)<\infty$ for all $x\in\mathcal{X}$, and if $\beta_n \in (0,1)$, $n \in \mathbb{N}$, satisfies $\sum_{n=1}^\infty \beta_n < \infty$ and $\lim_{n \to \infty}\varepsilon_n(\beta_n)=0$ (where $\varepsilon_n(\beta_n)$ is as in Lemma \ref{lemma:EpsilonCharacterizaion_sec}), then for each $x\in\mathcal{X}$, any sequence $\hat{\mathbb{Q}}_n^{x} \in \mathcal{B}_{\varepsilon_n(\beta_n)\cdot(\widehat{R}_{m}(x)+\alpha)}\left(F(x,\cdot)_{\#}\widehat{\mathbb{P}}_{n}\right)$ (which may depend on the training data) converges under the 1-Wasserstein metric (and thus weakly) to $F(x,\cdot)_{\#}\mathbb{P}$ almost surely with respect to $\mathbb{P}^\infty$:
\[
\mathbb{P}^\infty\left(\forall x\in\mathcal{X},\:\lim_{n \to \infty} W_{1}\bigl(F(x,\cdot)_{\#}\mathbb{P},\hat{\mathbb{Q}}_n^{x}\bigr)=0\right)=1.
\]
\end{lemma}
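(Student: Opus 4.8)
The plan is to control $W_1(F(x,\cdot)_{\#}\mathbb{P},\hat{\mathbb{Q}}_n^{x})$ uniformly in $x$ by a deterministic multiple of $\varepsilon_n(\beta_n)$ on a high-probability event, and then obtain the almost-sure statement by the Borel--Cantelli lemma. Fixing $x$ and writing $\mu:=F(x,\cdot)_{\#}\mathbb{P}$, $\hat{\mu}_n:=F(x,\cdot)_{\#}\widehat{\mathbb{P}}_{n}$, I would start from the triangle inequality for $W_1$,
\[
W_1(\mu,\hat{\mathbb{Q}}_n^{x})\ \le\ W_1(\mu,\hat{\mu}_n)\ +\ W_1(\hat{\mu}_n,\hat{\mathbb{Q}}_n^{x}),
\]
and bound each term, using repeatedly that $W_1\le W_p$ for $p\ge1$ (Jensen's inequality applied to any transport plan), so that a measure in a $p$-Wasserstein ball of radius $\rho$ is at $W_1$-distance at most $\rho$ from the center.

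The key preliminary step is a sample-independent uniform bound on the regularizer. Since $F(x,\cdot)$ is $\gamma_{x,F}$-Lipschitz with respect to $\|\cdot\|$ (Assumption~\ref{AssumptionPrincipal_sec}), every (sub)gradient satisfies $\|\nabla_\xi F(x,\zeta)\|_*\le\gamma_{x,F}$; combined with $\sum_j r_j=1$ this gives $\widehat{R}_m(x)\le\gamma_{x,F}\le\Gamma:=\sup_{x'\in\mathcal{X}}\gamma_{x',F}$, which is finite by Assumption~\ref{Assumption2FiniteGarant_sec} and does not depend on the sample, so it remains valid even when the $\zeta_j$ are data-driven, and it renders the hypothesis $\widehat{R}_m(x)<\infty$ automatic. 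The second term is then bounded \emph{deterministically}: since $\hat{\mathbb{Q}}_n^{x}\in\mathcal{B}_{\varepsilon_n(\beta_n)(\widehat{R}_m(x)+\alpha)}(\hat{\mu}_n)$ we get $W_1(\hat{\mu}_n,\hat{\mathbb{Q}}_n^{x})\le\varepsilon_n(\beta_n)(\widehat{R}_m(x)+\alpha)\le\varepsilon_n(\beta_n)(\Gamma+\alpha)$ for every $x$. For the first term I would invoke Lemma~\ref{lemma:EpsilonCharacterizaion_sec} with $\beta=\beta_n$: on the event $\mathcal{E}_n$ appearing there, which satisfies $\mathbb{P}^{n}(\mathcal{E}_n)>1-\beta_n$ and whose universal quantifier over $x$ is already built in, one has $\mu\in\mathcal{B}_{\varepsilon_n(\beta_n)(\widehat{R}_m(x)+\alpha)}(\hat{\mu}_n)$ for every $x$, hence $W_1(\mu,\hat{\mu}_n)\le\varepsilon_n(\beta_n)(\widehat{R}_m(x)+\alpha)\le\varepsilon_n(\beta_n)(\Gamma+\alpha)$ on $\mathcal{E}_n$. (Equivalently, one could combine the concentration inequality $\mathbb{P}^{n}(W_p(\widehat{\mathbb{P}}_{n},\mathbb{P})>\varepsilon_n(\beta_n))\le\beta_n$ of \cite{MohajerinEsfahani2018} with the propagation estimate of Lemma~\ref{lem:WassersteinPropagationBound_user}; the bound is the same up to a constant.) Adding the two pieces, on $\mathcal{E}_n$ and simultaneously for all $x\in\mathcal{X}$ we obtain $W_1(\mu,\hat{\mathbb{Q}}_n^{x})\le 2\varepsilon_n(\beta_n)(\Gamma+\alpha)$.

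To conclude, I would use $\sum_n\mathbb{P}^{\infty}(\mathcal{E}_n^{c})\le\sum_n\beta_n<\infty$ and Borel--Cantelli to get $\mathbb{P}^{\infty}(\liminf_n\mathcal{E}_n)=1$; on this full-measure event $\mathcal{E}_n$ holds for all $n$ beyond some sample-dependent index, so the uniform bound $W_1(F(x,\cdot)_{\#}\mathbb{P},\hat{\mathbb{Q}}_n^{x})\le 2\varepsilon_n(\beta_n)(\Gamma+\alpha)$ holds eventually for all $x$, and since $\varepsilon_n(\beta_n)\to0$ it forces $W_1(F(x,\cdot)_{\#}\mathbb{P},\hat{\mathbb{Q}}_n^{x})\to0$ for every $x$; weak convergence is then automatic because $W_1$-convergence implies it. The genuinely delicate points are (i) securing the uniform, sample-free bound on $\widehat{R}_m$, which is what makes the ambiguity radius vanish for data-driven scenario sets, and (ii) keeping the quantifier over $x$ inside the probability, which works precisely because Lemma~\ref{lemma:EpsilonCharacterizaion_sec} already provides a uniform-in-$x$ event and the resulting bound $2\varepsilon_n(\beta_n)(\Gamma+\alpha)$ is independent of $x$; the remaining steps are routine.
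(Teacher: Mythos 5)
Your proposal is correct and follows essentially the same route as the paper's proof: triangle inequality, the ball-membership bound on $W_1(\hat{\mu}_n,\hat{\mathbb{Q}}_n^{x})$, the uniform-in-$x$ event from Lemma~\ref{lemma:EpsilonCharacterizaion_sec} for the other term, and Borel--Cantelli. Your explicit derivation of the sample-free bound $\widehat{R}_m(x)\le\gamma_{x,F}\le\sup_{x'}\gamma_{x',F}$ is a nice touch that justifies the boundedness of $\widehat{R}_m(x)+\alpha$ (which the paper simply asserts in its final step), and it correctly covers the case of data-driven scenario sets.
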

\begin{proof}
Since $\hat{\mathbb{Q}}_n^{x} \in \mathcal{B}_{\varepsilon_n(\beta_n)\cdot(\widehat{R}_{m}(x)+\alpha)}\left(F(x,\cdot)_{\#}\widehat{\mathbb{P}}_{n}\right)$, we have $W_{1}(F(x,\cdot)_{\#}\widehat{\mathbb{P}}_{n},\hat{\mathbb{Q}}_n^{x}) \le \varepsilon_n(\beta_n)(\widehat{R}_{m}(x)+\alpha)$.
By the triangle inequality,
\begin{align*}
W_{1}(F(x,\cdot)_{\#}\mathbb{P},\hat{\mathbb{Q}}_n^{x}) &\leq W_{1}(F(x,\cdot)_{\#}\mathbb{P},F(x,\cdot)_{\#}\widehat{\mathbb{P}}_{n}) + W_{1}(F(x,\cdot)_{\#}\widehat{\mathbb{P}}_{n},\hat{\mathbb{Q}}_n^{x}) \\
&\leq  W_{1}(F(x,\cdot)_{\#}\mathbb{P},F(x,\cdot)_{\#}\widehat{\mathbb{P}}_{n}) + \varepsilon_n(\beta_n)(\widehat{R}_{m}(x)+\alpha).
\end{align*}
From Lemma \ref{lemma:EpsilonCharacterizaion_sec} (specifically, the condition $\mathbb{P}^{n}\left(\forall x\in\mathcal{X},\:\: W_1(F(x,\cdot)_{\#}\widehat{\mathbb{P}}_n, F(x,\cdot)_{\#}\mathbb{P}) \leq \varepsilon_n(\beta_n)(\widehat{R}_{m}(x)+\alpha) \right) > 1-\beta_n$ which follows from it), we have that with probability at least $1-\beta_n$,
\[
\forall x\in\mathcal{X},\:\: W_{1}(F(x,\cdot)_{\#}\mathbb{P},F(x,\cdot)_{\#}\widehat{\mathbb{P}}_{n}) \leq \varepsilon_n(\beta_n)(\widehat{R}_{m}(x)+\alpha).
\]
Thus, with probability at least $1-\beta_n$,
\[
\mathbb{P}^n\left(\forall x\in\mathcal{X}, W_{1}(F(x,\cdot)_{\#}\mathbb{P},\hat{\mathbb{Q}}_n^{x}) \leq 2\varepsilon_n(\beta_n)(\widehat{R}_{m}(x)+\alpha)\right)\geq 1-\beta_n.
\]
Since $\sum_{n=1}^\infty \beta_n < \infty$, the Borel--Cantelli Lemma implies that with $\mathbb{P}^\infty$-probability 1, $W_{1}(F(x,\cdot)_{\#}\mathbb{P},\hat{\mathbb{Q}}_n^{x}) \leq 2\varepsilon_n(\beta_n)(\widehat{R}_{m}(x)+\alpha)$ for all sufficiently large $n$, for every $x\in\mathcal{X}$. As $\lim_{n \to \infty}\varepsilon_n(\beta_n)=0$ and $\widehat{R}_{m}(x)+\alpha$ is bounded, it follows that $W_{1}(F(x,\cdot)_{\#}\mathbb{P},\hat{\mathbb{Q}}_n^{x})\to 0$ almost surely.
\end{proof}

We are now prepared to state the main result on asymptotic consistency.

\begin{theorem}[Asymptotic Consistency]\label{thm:AsymptoticConsistency_sec}
Suppose Assumptions \ref{Assumption1FiniteGarant_sec} and \ref{Assumption2FiniteGarant_sec} hold (for $p=1$). Let $\beta_n \in (0,1)$, $n \in \mathbb{N}$, satisfy $\sum_{n=1}^\infty \beta_n < \infty$ and $\lim_{n\to\infty}\varepsilon_n(\beta_n)=0$, where $\varepsilon_n(\beta_n)$ is as in Lemma \ref{lemma:EpsilonCharacterizaion_sec}. Let $\widehat{J}_{m,n}^{\mathrm{DD}}$ and $\hat{x}_{m,n}^{\mathrm{DD}}$ denote the optimal value and an optimizer of \eqref{eqn:WDROFormSAARegGen_sec} with $R=\widehat{R}_{m}$ and  $\varepsilon = \varepsilon_n(\beta_n)$, for each $n \in \mathbb{N}$. Thus, $\hat{x}_{m,n}^{\mathrm{DD}}$ is also an optimal solution of \eqref{eqn:SAARegGeneral_sec} with $\varepsilon = \varepsilon_n(\beta_n)$.

\begin{enumerate}
\item[(i)] If $F(x,\xi)$ is upper semicontinuous in $x$ for each $\xi \in \Xi$, then $\lim_{n \to \infty} \widehat{J}_{m,n}^{\mathrm{DD}} = J^*$ almost surely, where $J^*$ is the optimal value of the true problem \eqref{eqn:SP_intro}.

\item[(ii)] If the conditions of (i) hold, $\mathcal{X}$ is closed, $F(x,\xi)$ is lower semicontinuous in $x$ for each $\xi \in \Xi$, and there exists $L \ge 0$ such that $|F(x,\xi)| \le L(1+\|\xi\|)$ for all $x \in \mathcal{X}$ and $\xi \in \Xi$, then any accumulation point of the sequence $\{\hat{x}_{m,n}^{\mathrm{DD}}\}_{n \in \mathbb{N}}$ is almost surely an optimal solution of \eqref{eqn:SP_intro}.
\end{enumerate}
\end{theorem}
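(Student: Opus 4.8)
The plan is to prove both parts by sandwiching $\widehat{J}^{\mathrm{DD}}_{m,n}$ between two bounds that each collapse to $J^*$. I would first record two preliminary facts. (a) $\widehat{R}_m$ is uniformly bounded: every (sub)gradient satisfies $\|\nabla_\xi F(x,\zeta_j)\|_*\le\gamma_{x,F}\le\sup_{x'\in\mathcal{X}}\gamma_{x',F}=:\Gamma<\infty$ by Assumption~\ref{Assumption2FiniteGarant_sec}, and since $\sum_j r_j=1$ this gives $\widehat{R}_m(x)\le\Gamma$ for every $x$ and every sample realization; hence $\varepsilon_n(\beta_n)\widehat{R}_m(x)\to0$ and $\varepsilon_n(\beta_n)\alpha\to0$. (b) By Theorem~\ref{Thm:EquivalenceRegSAAvsWDRO}, $\widehat{J}^{\mathrm{DD}}_{m,n}=\min_{x\in\mathcal{X}}\big(\tfrac1n\sum_{i=1}^n F(x,\widehat{\xi}_i)+\varepsilon_n(\beta_n)\widehat{R}_m(x)\big)$ almost surely, and replacing $\widehat{R}_m$ by $\widehat{R}_m+\alpha$ in the identity \eqref{eqn:WDROFormSAARegGen_presec} only adds the $x$-independent constant $\varepsilon_n(\beta_n)\alpha$.

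For part (i), I would prove the upper bound first. Fix any $x_0\in\mathcal{X}$: $F(x_0,\cdot)$ is Lipschitz in $\xi$ (Assumption~\ref{Assumption2FiniteGarant_sec}) and $\mathbb{E}_{\mathbb{P}}\|\xi\|<\infty$ (Assumption~\ref{Assumption1FiniteGarant_sec}), so $F(x_0,\xi)$ is $\mathbb{P}$-integrable and the strong law gives $\tfrac1n\sum_i F(x_0,\widehat{\xi}_i)\to\mathbb{E}_{\mathbb{P}}[F(x_0,\xi)]$ a.s.; combined with $\widehat{J}^{\mathrm{DD}}_{m,n}\le\tfrac1n\sum_i F(x_0,\widehat{\xi}_i)+\varepsilon_n(\beta_n)\widehat{R}_m(x_0)$ and (a), this yields $\limsup_n\widehat{J}^{\mathrm{DD}}_{m,n}\le\mathbb{E}_{\mathbb{P}}[F(x_0,\xi)]$ a.s. Intersecting these null sets over a countable minimizing sequence $(x_k)_k$ of \eqref{eqn:SP_intro} (which exists by definition of the infimum) gives $\limsup_n\widehat{J}^{\mathrm{DD}}_{m,n}\le J^*$ on a probability-one set. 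For the lower bound, let $\mathcal{E}_n:=\{\forall x\in\mathcal{X},\ F(x,\cdot)_{\#}\mathbb{P}\in\mathcal{B}_{\varepsilon_n(\beta_n)(\widehat{R}_m(x)+\alpha)}(F(x,\cdot)_{\#}\widehat{\mathbb{P}}_n)\}$; Lemma~\ref{lemma:EpsilonCharacterizaion_sec} gives $\mathbb{P}^n(\mathcal{E}_n)>1-\beta_n$, and since $\sum_n\beta_n<\infty$, Borel--Cantelli shows $\mathcal{E}_n$ holds for all large $n$ almost surely. On $\mathcal{E}_n$, for every $x$ the containment together with (b) gives $\mathbb{E}_{\mathbb{P}}[F(x,\xi)]=\mathbb{E}_{\varsigma\sim F(x,\cdot)_{\#}\mathbb{P}}[\varsigma]\le\sup_{\mathbb{Q}\in\mathcal{B}_{\varepsilon_n(\beta_n)(\widehat{R}_m(x)+\alpha)}(F(x,\cdot)_{\#}\widehat{\mathbb{P}}_n)}\mathbb{E}_{\mathbb{Q}}[\varsigma]=\tfrac1n\sum_i F(x,\widehat{\xi}_i)+\varepsilon_n(\beta_n)(\widehat{R}_m(x)+\alpha)$; taking the infimum over $x$ and pulling out the constant $\varepsilon_n(\beta_n)\alpha$ yields $J^*\le\widehat{J}^{\mathrm{DD}}_{m,n}+\varepsilon_n(\beta_n)\alpha$, hence $\liminf_n\widehat{J}^{\mathrm{DD}}_{m,n}\ge J^*$ a.s. Combining the two bounds proves $\widehat{J}^{\mathrm{DD}}_{m,n}\to J^*$ a.s. The upper-semicontinuity hypothesis (together with the closedness and growth conditions added in (ii)) is used to ensure that the minimizers $\hat{x}^{\mathrm{DD}}_{m,n}$ appearing in the statement exist; alternatively, the lower bound is exactly Theorem~\ref{Thm:IneuqlityProbaOptimalValueAdv} evaluated at $\hat{x}^{\mathrm{DD}}_{m,n}$.

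For part (ii), let $\bar{x}$ be an accumulation point of $\{\hat{x}^{\mathrm{DD}}_{m,n}\}$, with $\hat{x}^{\mathrm{DD}}_{m,n_k}\to\bar{x}$; closedness of $\mathcal{X}$ gives $\bar{x}\in\mathcal{X}$, so $\mathbb{E}_{\mathbb{P}}[F(\bar{x},\xi)]\ge J^*$. On the Borel--Cantelli event from part (i), evaluating the containment in $\mathcal{E}_n$ at $x=\hat{x}^{\mathrm{DD}}_{m,n}$ and using (b) (exactly as in the proof of Theorem~\ref{Thm:IneuqlityProbaOptimalValueAdv}) gives, for all large $n$, $\mathbb{E}_{\mathbb{P}}[F(\hat{x}^{\mathrm{DD}}_{m,n},\xi)]\le\widehat{J}^{\mathrm{DD}}_{m,n}+\varepsilon_n(\beta_n)\alpha$; letting $k\to\infty$ along the subsequence and invoking part (i) and (a) gives $\limsup_k\mathbb{E}_{\mathbb{P}}[F(\hat{x}^{\mathrm{DD}}_{m,n_k},\xi)]\le J^*$. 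I would then lower-bound the left-hand side by $\mathbb{E}_{\mathbb{P}}[F(\bar{x},\xi)]$ via Fatou's lemma applied to the nonnegative functions $F(\hat{x}^{\mathrm{DD}}_{m,n_k},\xi)+L(1+\|\xi\|)$ (the dominating term is integrable because $\mathbb{E}_{\mathbb{P}}\|\xi\|<\infty$) together with lower semicontinuity of $F(\cdot,\xi)$, obtaining $\liminf_k\mathbb{E}_{\mathbb{P}}[F(\hat{x}^{\mathrm{DD}}_{m,n_k},\xi)]\ge\mathbb{E}_{\mathbb{P}}[F(\bar{x},\xi)]$. Chaining the inequalities gives $\mathbb{E}_{\mathbb{P}}[F(\bar{x},\xi)]\le J^*$, and with the trivial reverse inequality $\bar{x}$ is optimal for \eqref{eqn:SP_intro}.

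The step I expect to be the main obstacle is handling the data-dependent minimizer $\hat{x}^{\mathrm{DD}}_{m,n}$ in part (ii): the argument is legitimate only because Lemma~\ref{lemma:EpsilonCharacterizaion_sec} is \emph{uniform over $x\in\mathcal{X}$}, so a single almost-sure event suffices to evaluate the ball containment at the random point $\hat{x}^{\mathrm{DD}}_{m,n_k}$; one must also ensure the Fatou step uses the uniform growth bound $|F(x,\xi)|\le L(1+\|\xi\|)$ so that the negative parts are dominated uniformly in $x$, rather than the merely pointwise-in-$x$ Lipschitz estimate available in part (i). The remaining points—finiteness of $J^*$ and of each $\mathbb{E}_{\mathbb{P}}[F(x,\xi)]$, and the two vanishing limits in (a)—are routine consequences of Assumptions~\ref{Assumption1FiniteGarant_sec} and~\ref{Assumption2FiniteGarant_sec}.
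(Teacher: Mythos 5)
Your proposal is correct, and parts of it coincide with the paper's argument: the lower bound $J^*\le\liminf_n\widehat{J}^{\mathrm{DD}}_{m,n}$ via Lemma~\ref{lemma:EpsilonCharacterizaion_sec} (equivalently Theorem~\ref{Thm:IneuqlityProbaOptimalValueAdv}) plus Borel--Cantelli, and the whole of part (ii) (closedness, lower semicontinuity, Fatou with the linear-growth envelope), are essentially identical to what the paper does; your remark that the uniformity over $x$ in Lemma~\ref{lemma:EpsilonCharacterizaion_sec} is what legitimizes evaluating the containment at the random point $\hat{x}^{\mathrm{DD}}_{m,n}$ is exactly the right thing to flag. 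Where you genuinely diverge is the upper bound $\limsup_n\widehat{J}^{\mathrm{DD}}_{m,n}\le J^*$ in part (i). The paper fixes a $\delta$-optimal decision $x_\delta$ and a $\delta$-optimal worst-case distribution $\hat{\mathbb{Q}}_n^{x_\delta}$ in the decision-dependent ball, bounds $\mathbb{E}_{\hat{\mathbb{Q}}_n^{x_\delta}}[\varsigma]-\mathbb{E}_{F(x_\delta,\cdot)_{\#}\mathbb{P}}[\varsigma]$ by $W_1(F(x_\delta,\cdot)_{\#}\mathbb{P},\hat{\mathbb{Q}}_n^{x_\delta})$ via Kantorovich--Rubinstein duality (Theorem~\ref{Thm:KantorovichRubinsteinDD_sec}), and then invokes the convergence Lemma~\ref{lem:ConvergenceOfdistributions_sec}. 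You instead exploit the explicit identity of Theorem~\ref{Thm:EquivalenceRegSAAvsWDRO} to reduce everything to the regularized SAA objective, apply the SLLN at each fixed $x_0$ (integrability following from the Lipschitz bound and Assumption~\ref{Assumption1FiniteGarant_sec}), kill the penalty via the uniform bound $\widehat{R}_m\le\Gamma$, and intersect null sets over a countable minimizing sequence. Your route is more elementary and self-contained — it never needs Lemma~\ref{lem:ConvergenceOfdistributions_sec} or the duality theorem — at the cost of being tied to the specific structure where the worst-case expectation has a closed form; the paper's route mirrors the Mohajerin Esfahani--Kuhn template and would survive in settings where the sup over the ball is not explicitly computable. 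Both are valid here, and your uniform bound on $\widehat{R}_m$ is actually a cleaner justification of the boundedness the paper asserts only informally at the end of Lemma~\ref{lem:ConvergenceOfdistributions_sec}.
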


\noindent The proof of this theorem is deferred to Appendix \ref{sec:ProofAsymptoticConsistency_sec}.

This Asymptotic Consistency Theorem is of paramount importance. Part (i) assures us that, as the amount of available data ($n$) increases, the optimal value obtained from our regularized SAA approach (which is equivalent to the decision-dependent WDRO) converges to the true optimal value of the underlying stochastic problem \eqref{eqn:SP_intro}. This means our method does not systematically overestimate or underestimate the true achievable performance in the long run. Part (ii) provides an even stronger guarantee: the actual solutions (decision vectors) generated by our method converge to the true optimal decisions under an appropriate choice of $\varepsilon$. This ensures that the method is not only finding the right performance level but is also identifying the correct strategies or configurations as data accumulates. Together, these results provide strong theoretical validation for the proposed approach, indicating its reliability and convergence to optimality in data-rich environments.

%

\section{Numerical Experiments}
\label{sec:NumericalExperiments}

In this section, we validate our proposed Scenario-based Regularized SAA framework through two distinct numerical experiments. The first experiment, on a multi-product newsvendor problem, uses simulated data to demonstrate the method's utility as a computationally tractable subrogate to WDRO for achieving robust out-of-sample performance. The second experiment, on a mean-risk portfolio optimization problem, uses real-world financial data to showcase the method's ability to incorporate expert-defined adverse scenarios, as discussed in our motivation.

\subsection{Multi-Product Newsvendor Problem}
\label{subsec:Newsvendor}

This first experiment addresses the classic multi-product newsvendor problem. We first formulate the problem, analyze its properties (subgradient and Lipschitz modulus), and then detail the intractability or inadequacy of standard WDRO approaches. This analysis provides the critical motivation for applying our SBR-SAA method, for which we derive an exact MISOCP reformulation. Finally, we present the experimental design and numerical results.

\subsubsection{Problem Formulation}

We consider a single-period, multi-product Newsvendor problem with $d$ items. The decision vector is $x\in\mathbb{R}^d$ (order quantities), and the random demand is $\xi\in\Xi:=\{\xi\in\mathbb{R}^d:\ \xi_i\ge 0,\ i=1,\dots,d\}$. The feasible set is
\[
\mathcal{X}:=\{x\in\mathbb{R}^d:\ 0\le x_i\le a_i,\ i=1,\dots,d\},\qquad a\in\mathbb{R}_+^d,
\]
and we define the component-wise minimum operator $\min\{x,\xi\}:=(\min\{x_i,\xi_i\})_{i=1}^d$.

\paragraph{Cost Function.}
Given $x$ and a realization $\xi$, the cost (negative net revenue) is
\begin{equation}\label{eq:F-original-News}
F(x,\xi) \;=\; c^\top x - v^\top \min\{x,\xi\} - g^\top (x-\min\{x,\xi\}) + b^\top (\xi-\min\{x,\xi\}),
\end{equation}
where $c$ is the unit procurement cost, $v$ the selling price, $g$ the salvage value, and $b$ the shortage penalty. We adopt the standard economic assumptions (component-wise):
\begin{equation}\label{eq:assumptions-News}
v>g,\qquad b\ge 0,\qquad b\ge v-g.
\end{equation}
The first assumption implies selling is more profitable than salvaging. The second is natural for penalties. The third, $b \ge v-g$, requires that failing to meet demand costs at least the lost opportunity $v-g$. These hypotheses are common and reasonable in applications involving opportunity costs and loss of goodwill.

From \eqref{eq:F-original-News}, we obtain the equivalent compact form
\begin{equation}\label{eq:F-compacta-News}
F(x,\xi) \;=\; (c-g)^\top x \;+\; b^\top \xi \;+\; (g-v-b)^\top \min\{x,\xi\}.
\end{equation}
Note that under \eqref{eq:assumptions-News}, the vector $g-v-b$ is strictly negative. The stochastic optimization problem is thus:
\begin{equation} \label{eqn:SPNewsvendorMultiproduct}
\min_{x\in\mathcal{X}}\mathbb{E}_{\xi\sim\mathbb{P}}[F(x,\xi)].
\end{equation}

\subsubsection{Problem Properties: Subgradient and Lipschitz Modulus}

The coordinate-wise structure of \eqref{eq:F-compacta-News} allows for a simple characterization of the subdifferential with respect to $\xi$.

\begin{proposition}[Subdifferential in $\xi$]\label{prop:subdiff-News}
For a fixed $x\in\mathcal{X}$, the subdifferential $\partial_{\xi}F(x,\xi)$ is the Cartesian product of the intervals $\partial_{\xi_i}F(x,\xi)$ for $i=1,\dots,d$, where
\[
\partial_{\xi_i}F(x,\xi)=
\begin{cases}
\{\,g_i-v_i\,\}, & \xi_i<x_i,\\[2pt]
[g_i-v_i,\,b_i], & \xi_i=x_i,\\[2pt]
\{\,b_i\,\}, & \xi_i>x_i.
\end{cases}
\]
\end{proposition}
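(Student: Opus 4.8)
The plan is to exploit the coordinate-wise separability of $F(x,\cdot)$. For fixed $x$, the compact form \eqref{eq:F-compacta-News} lets us write $F(x,\xi)=\sum_{i=1}^{d} f_i(\xi_i)$, where
\[
f_i(t):=(c_i-g_i)x_i+b_i t+(g_i-v_i-b_i)\min\{x_i,t\}
\]
depends only on the single variable $\xi_i$. Since the summands act on disjoint coordinate blocks, the subdifferential of the sum is the Cartesian product $\partial_\xi F(x,\xi)=\prod_{i=1}^{d}\partial f_i(\xi_i)$; I would invoke the standard subdifferential rule for a separable sum of finite-valued convex functions (no constraint qualification is needed). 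This reduces the claim to computing $\partial f_i(t)$ for a one-dimensional, convex, piecewise-linear function.

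Next I would record that each $f_i$ is convex under the economic assumptions \eqref{eq:assumptions-News}: the map $t\mapsto\min\{x_i,t\}$ is concave, and \eqref{eq:assumptions-News} forces $g_i-v_i-b_i<0$ (indeed $g_i-v_i<0$ and $b_i\ge 0$), so $(g_i-v_i-b_i)\min\{x_i,\cdot\}$ is convex, and adding the affine term $b_i t$ preserves convexity. In particular "the subdifferential" is unambiguous. I would then split $f_i$ into its two affine pieces: for $t<x_i$ we have $\min\{x_i,t\}=t$, so $f_i(t)=(c_i-g_i)x_i+(g_i-v_i)t$ with slope $g_i-v_i$; for $t>x_i$ we have $\min\{x_i,t\}=x_i$, so the slope is $b_i$. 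Continuity at $t=x_i$ is immediate (both pieces evaluate to $(c_i-v_i)x_i$), and the slope jumps upward from $g_i-v_i$ to $b_i$, consistent with convexity precisely because $b_i\ge v_i-g_i>0>g_i-v_i$, hence $g_i-v_i\le b_i$.

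Finally I would read off $\partial f_i$ from this piecewise description: for $t<x_i$ the function is locally affine with derivative $g_i-v_i$, so $\partial f_i(t)=\{g_i-v_i\}$; for $t>x_i$ likewise $\partial f_i(t)=\{b_i\}$; and at the kink $t=x_i$ the subdifferential of a univariate convex function is the closed interval between the one-sided derivatives, i.e.\ $[g_i-v_i,\,b_i]$. Reassembling the coordinates through the product rule yields the stated formula.

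I do not anticipate a genuine obstacle here; the proof is essentially bookkeeping. The only points requiring care are (a) stating explicitly that $F(x,\cdot)$ is treated as the function on all of $\mathbb{R}^d$ defined by the same algebraic expression, so that no normal-cone contribution from $\Xi$ enters and the statement matches the three listed cases, and (b) citing the separable-sum subdifferential rule properly so that the Cartesian-product structure is rigorously justified rather than merely asserted.
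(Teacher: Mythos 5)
Your proof is correct and follows essentially the same route as the paper's: coordinate-wise separability reduces the claim to a one-dimensional piecewise-linear function, whose slopes on the two affine pieces and whose subdifferential interval at the kink give the stated formula. One small point in your favor: you correctly identify each univariate summand as \emph{convex} (a negative coefficient $g_i-v_i-b_i<0$ times the concave map $t\mapsto\min\{x_i,t\}$, plus an affine term), whereas the paper's proof momentarily labels it concave before writing down the same (correct) interval $[g_i-v_i,\,b_i]$.
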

\begin{proof}
The function $F(x,\xi)$ is separable with respect to $\xi$: $F(x,\xi)=\mathrm{const}(x)+\sum_{i=1}^d \phi_i(\xi_i)$, where
\[
\phi_i(\xi_i)=b_i\,\xi_i+(g_i-v_i-b_i)\min\{x_i,\xi_i\}.
\]
If $\xi_i<x_i$, $\min\{x_i,\xi_i\}=\xi_i$, so $\phi_i(\xi_i)=(b_i+g_i-v_i-b_i)\xi_i=(g_i-v_i)\,\xi_i$, and the derivative is $g_i-v_i$.
If $\xi_i>x_i$, $\min\{x_i,\xi_i\}=x_i$, so $\phi_i(\xi_i)=b_i\,\xi_i+(g_i-v_i-b_i)x_i$, and the derivative is $b_i$.
At the kink $\xi_i=x_i$, $\phi_i$ is a concave function (as $g_i-v_i-b_i < 0$ by assumption \eqref{eq:assumptions-News}). The subdifferential is the interval defined by the right and left derivatives, $[\partial^+\phi_i, \partial^-\phi_i] = [b_i, g_i-v_i]$. Given $b \ge v-g > 0$, we have $b_i \ge v_i-g_i > 0$, so $b_i > 0$ and $g_i-v_i < 0$, which implies $g_i-v_i < b_i$. Thus, the subdifferential is $[g_i-v_i, b_i]$.
\end{proof}

\begin{corollary}[Lipschitz Modulus in $\xi$]\label{cor:lipschitz-News}
Under \eqref{eq:assumptions-News}, the Lipschitz modulus of $F(x,\cdot)$ with respect to the $\ell_2$-norm satisfies
\[
\gamma_{x,F}=\sup_{\varphi\in\partial_\xi F(x,\xi),\ \xi\in\mathbb{R}_+^d}\left\|\varphi\right\|_2=\left\|b\right\|_2.
\]
\end{corollary}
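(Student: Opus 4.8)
The plan is to combine Proposition~\ref{prop:subdiff-News} with the standard fact that, for a locally Lipschitz function $h:\mathbb{R}^d\to\mathbb{R}$, its Lipschitz modulus with respect to a norm $\|\cdot\|$ equals the supremum of $\|\varphi\|_*$ over all (Clarke) subgradients $\varphi$ at all points, where $\|\cdot\|_*$ is the dual norm; for the $\ell_2$-norm the dual norm is again the $\ell_2$-norm. Hence $\gamma_{x,F}$ coincides with the quantity $\sup\{\|\varphi\|_2:\varphi\in\partial_\xi F(x,\xi),\ \xi\in\mathbb{R}_+^d\}$ written in the statement, and it only remains to evaluate this supremum.

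First I would identify the set over which the supremum is taken. By Proposition~\ref{prop:subdiff-News}, $\partial_\xi F(x,\xi)$ is the Cartesian product of the coordinate intervals $\partial_{\xi_i}F(x,\xi)$, each equal to $\{g_i-v_i\}$, $[g_i-v_i,b_i]$, or $\{b_i\}$ according to the position of $\xi_i$ relative to $x_i$. Taking the union over all $\xi\in\mathbb{R}_+^d$ (in particular letting each $\xi_i$ pass through the kink $\xi_i=x_i$, which is feasible since $x\in\mathcal{X}\subseteq\mathbb{R}_+^d$), the set of attainable subgradients is exactly the box $B:=\prod_{i=1}^d[g_i-v_i,b_i]$.

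Next I would maximize $\|\varphi\|_2$ over $B$. Since $\varphi\mapsto\|\varphi\|_2^2=\sum_{i=1}^d\varphi_i^2$ is convex and separable, its maximum over the box is attained at a vertex, and coordinatewise the optimal choice is the endpoint of $[g_i-v_i,b_i]$ of larger absolute value. Under the economic assumptions \eqref{eq:assumptions-News} we have $v_i>g_i$ and $b_i\ge v_i-g_i$, so $b_i\ge v_i-g_i=|g_i-v_i|\ge 0$ and therefore $b_i^2\ge(g_i-v_i)^2$ for every $i$; hence the maximizer is $\varphi=b$ and $\sup_{\varphi\in B}\|\varphi\|_2=\|b\|_2$. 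Finally, to confirm this value is genuinely attained as a Lipschitz constant, note that on the open region $\{\xi\in\mathbb{R}_+^d:\xi_i>x_i\ \text{for all }i\}$ the map $F(x,\cdot)$ is affine with gradient exactly $b$, so it is $\|b\|_2$-Lipschitz there and no smaller global modulus is possible; combined with the upper bound, this gives $\gamma_{x,F}=\|b\|_2$.

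The argument is entirely routine; the only points needing slight care are the passage from the fixed-$\xi$ subdifferential in Proposition~\ref{prop:subdiff-News} to the claim that the union over $\xi\in\mathbb{R}_+^d$ is the full box $B$ (which uses that the kink $\xi_i=x_i$ lies in $\Xi$ and the piecewise-linear, monotone structure of each $\phi_i$), and the standard identification of the Lipschitz modulus with the supremal dual norm of subgradients, which I would simply invoke.
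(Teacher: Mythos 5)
Your proof is correct and follows essentially the same route as the paper: both reduce the problem via Proposition~\ref{prop:subdiff-News} to maximizing $\|\varphi\|_2$ over the coordinate box $\prod_i[g_i-v_i,b_i]$ and use $b_i\ge v_i-g_i=|g_i-v_i|$ to conclude the maximizer is $\varphi=b$. Your added details (the explicit identification of the Lipschitz modulus with the supremal subgradient norm, and the attainment check on the affine region $\{\xi_i>x_i\ \forall i\}$) are sound refinements of the same argument rather than a different approach.
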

\begin{proof}
By Proposition \ref{prop:subdiff-News}, any subgradient $\varphi$ has components $\varphi_i \in [g_i-v_i, b_i]$. By assumption \eqref{eq:assumptions-News}, $b_i \ge v_i-g_i > 0$. This implies $|g_i-v_i| = v_i-g_i \le b_i = |b_i|$. Therefore, the component with the largest absolute value in the interval $[g_i-v_i, b_i]$ is $b_i$. The subgradient vector with the maximum $\ell_2$-norm is $\varphi = b$, and its norm is $\left\|b\right\|_2$.
\end{proof}

\subsubsection{Application of SBR-SAA and MISOCP Reformulation}

We apply our SBR-SAA model \eqref{eq:SBR-SAA_sec} to the newsvendor problem. Let $\Xi_{\mathrm{reg}}=\{\zeta_{1},\ldots,\zeta_{m}\} \subset \hat{\Xi}_n$ be a selected subset of the empirical sample, with weights $r_j \ge 0, \sum r_j = 1$. The problem is:
\begin{equation}\label{eq:model-penal-News}
\min_{x\in\mathcal{X}}\ \frac{1}{n}\sum_{i=1}^n F(x,\widehat\xi_i)\;+\;\varepsilon \left(\sum_{j=1}^m r_j\,\left(\max_{\varphi_j \in \partial_\xi F(x,\zeta_j)}\left\|\varphi_j\right\|_2\right)^{2} \right)^{\frac{1}{2}}.
\end{equation}
We select the subgradient $\varphi_j$ from the subdifferential $\partial_\xi F(x,\zeta_j)$ that has the largest $\ell_2$-norm, consistent with a robust approach. As shown in Corollary \ref{cor:lipschitz-News}, the components of this worst-case subgradient are $\varphi_{j,i} = b_i$ if $x_i \le \zeta_{j,i}$ and $\varphi_{j,i} = g_i-v_i$ if $x_i > \zeta_{j,i}$.

\begin{theorem}[Exact MISOCP Reformulation]\label{thm:MISOCP-News}
Under assumptions \eqref{eq:assumptions-News}, the SBR-SAA problem \eqref{eq:model-penal-News} is equivalent to the following Mixed Integer Second Order Conic Program (MISOCP):
\begin{align}
\min_{x,w,y,z,t,s}\quad & (c-g)^\top x\;+\;\frac{1}{n}\sum_{i=1}^n \left( (g-v-b)^\top w_i + b^\top \widehat\xi_i \right) \;+\;\varepsilon \cdot s \label{obj:MISOCP-News}\\
\text{s.t.}\quad 
& 0\le x\le a, \label{cotas-x-News}\\
& w_i \le x,\ \ w_i \le \widehat\xi_i & (\forall i \in [n]), \label{min-lin-News}\\
& x_k \ge \zeta_{j,k}+\delta_k - M_k(1-y_{j,k}) & (\forall k \in [d], \forall j \in [m]), \label{gate-y1-News}\\
& x_k \le \zeta_{j,k}+M_k y_{j,k} &d (\forall k \in [d], \forall j \in [m]), \label{gate-y2-News}\\
& z_{j,k}= b_k + (g_k-v_k-b_k)\,y_{j,k} & (\forall k \in [d], \forall j \in [m]), \label{z-linear-News}\\
& r_{j}^{\frac{1}{2}}\left\|z_j\right\|_2 \le t_j & (\forall j \in [m]), \label{cone-norm-News}\\
& \|t\|_{2} \leq s & \\
& y_{j,k}\in\{0,1\} & (\forall k \in [d], \forall j \in [m]), \label{bin-y-News}
\end{align}
where $\delta_k > 0$ is a small tolerance (e.g., machine epsilon) and $M_k$ is a sufficiently large constant (e.g., $M_k \ge a_k$).
\end{theorem}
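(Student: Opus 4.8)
The plan is to reformulate, one at a time, the three nonlinear ingredients of problem~\eqref{eq:model-penal-News} -- the pointwise minima $\min\{x,\widehat{\xi}_i\}$ in the SAA term, the worst-case subgradient norm $\max_{\varphi_j\in\partial_\xi F(x,\zeta_j)}\|\varphi_j\|_2$ in the regularizer, and the outer square root $\bigl(\sum_j r_j(\cdot)^2\bigr)^{1/2}$ -- by the standard hypograph, big-$M$, and second-order-cone liftings, and then to close the argument by comparing the two programs in both directions: every feasible $x\in\mathcal{X}$ lifts to a feasible point of the MISOCP with the \emph{same} objective, while every feasible point of the MISOCP has objective \emph{at least} that of its $x$-component in \eqref{eq:model-penal-News}. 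Taking minima on both sides then gives equality of optimal values and correspondence of minimizers.

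For the SAA term I would start from the compact form \eqref{eq:F-compacta-News}, so that $\tfrac1n\sum_i F(x,\widehat{\xi}_i)=(c-g)^\top x+\tfrac1n\sum_i\bigl(b^\top\widehat{\xi}_i+(g-v-b)^\top\min\{x,\widehat{\xi}_i\}\bigr)$, and introduce $w_i\in\mathbb{R}^d$ subject to the hypograph constraints \eqref{min-lin-News}, which force $w_i\le\min\{x,\widehat{\xi}_i\}$ componentwise. Since $g-v-b<0$ componentwise by \eqref{eq:assumptions-News} and $w_i$ enters the minimized objective \eqref{obj:MISOCP-News} linearly, any optimal solution has $w_i=\min\{x,\widehat{\xi}_i\}$, so this block exactly reproduces the SAA term; for an arbitrary feasible $w_i$ it only underestimates it, which is precisely what is needed for the ``$\ge$'' direction.

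For the regularizer I would first invoke Proposition~\ref{prop:subdiff-News} and the reasoning of Corollary~\ref{cor:lipschitz-News} -- in particular $b_k\ge v_k-g_k>0$, so at a kink the extreme subgradient with largest modulus is $b_k$ -- to show that $\max_{\varphi_j\in\partial_\xi F(x,\zeta_j)}\|\varphi_j\|_2=\|z_j(x)\|_2$, where $z_j(x)_k=b_k$ if $x_k\le\zeta_{j,k}$ and $z_j(x)_k=g_k-v_k$ if $x_k>\zeta_{j,k}$. The binaries $y_{j,k}$ with the gates \eqref{gate-y1-News}--\eqref{gate-y2-News} encode ``$y_{j,k}=1$ iff $x_k>\zeta_{j,k}$'': for $M_k$ large enough (relative to $a_k$ and the $\zeta_{j,k}$) the slack inequality is vacuous on $\mathcal{X}$; the branch $y_{j,k}=0$ forces $x_k\le\zeta_{j,k}$, the branch $y_{j,k}=1$ forces $x_k\ge\zeta_{j,k}+\delta_k$; and at $x_k=\zeta_{j,k}$ the binary is pinned to $0$, which is exactly the worst-case selection $z_{j,k}=b_k$. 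Then \eqref{z-linear-News} evaluates to $z_{j,k}=b_k$ when $y_{j,k}=0$ and to $g_k-v_k$ when $y_{j,k}=1$, so $z_j=z_j(x)$ and $\|z_j\|_2$ coincides with the worst-case subgradient norm. Finally the nested norm is placed in epigraph form through the second-order-cone constraints \eqref{cone-norm-News}, namely $r_j^{1/2}\|z_j\|_2\le t_j$, together with $\|t\|_2\le s$; because $\varepsilon\ge0$ and $s$ is minimized, at optimality $t_j=r_j^{1/2}\|z_j\|_2$ and $s=\|t\|_2=\bigl(\sum_j r_j\|z_j\|_2^2\bigr)^{1/2}=\widehat{R}_{m}(x)$, while for an arbitrary feasible $(z,t,s)$ one still gets $s\ge\widehat{R}_{m}(x)$.

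Assembling the three blocks: a feasible $x\in\mathcal{X}$ (avoiding the collars $x_k\in(\zeta_{j,k},\zeta_{j,k}+\delta_k)$), completed with $w_i=\min\{x,\widehat{\xi}_i\}$, the induced $y$, $z$ from \eqref{z-linear-News}, $t_j=r_j^{1/2}\|z_j\|_2$ and $s=\|t\|_2$, is MISOCP-feasible with objective exactly $\tfrac1n\sum_i F(x,\widehat{\xi}_i)+\varepsilon\widehat{R}_{m}(x)$; conversely any MISOCP-feasible point has objective at least $\tfrac1n\sum_i F(x,\widehat{\xi}_i)+\varepsilon\widehat{R}_{m}(x)$ for its $x$-component (using $w_i\le\min\{x,\widehat{\xi}_i\}$, $g-v-b<0$, and $s\ge\widehat{R}_{m}(x)$). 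Minimizing over all feasible points yields the claimed equivalence. I expect the main obstacle to be the regularizer block: verifying that the big-$M$ gates with the tolerance $\delta_k$ faithfully encode the strict inequality $x_k>\zeta_{j,k}$ and, at the kink $x_k=\zeta_{j,k}$, return precisely the worst-case value $b_k$. This is where all of the economic hypotheses \eqref{eq:assumptions-News} are used, and it is the step that makes the reformulation exact, modulo the usual infinitesimal-tolerance modeling of a strict inequality in a mixed-integer program (the two optimal values coincide in the limit $\delta_k\downarrow0$).
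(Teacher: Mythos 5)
Your proposal is correct and follows essentially the same route as the paper's proof: compact form for the SAA term, hypograph linearization of $\min\{x,\widehat\xi_i\}$ exploiting $g-v-b<0$, big-$M$ gating of the subgradient regimes with the tie at $x_k=\zeta_{j,k}$ resolved to the worst-case component $b_k$, and second-order-cone lifting of the nested norms. If anything you are more careful than the paper, since you make the two-sided (lift-and-dominate) comparison explicit and honestly flag the $\delta_k$-collar $x_k\in(\zeta_{j,k},\zeta_{j,k}+\delta_k)$ that the big-$M$ gates render infeasible, a point the paper's proof passes over.
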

\begin{proof}
First, the objective \eqref{obj:MISOCP-News} substitutes the compact form \eqref{eq:F-compacta-News} for $F(x,\widehat\xi_i)$ and includes the regularization term $\varepsilon \left(\sum_{j=1}^m r_j\,\left(\max_{\varphi_j \in \partial_\xi F(x,\zeta_j)}\left\|\varphi_j\right\|_2\right)^{2} \right)^{\frac{1}{2}}$, which can be expressed as $\varepsilon \left(\sum_{j=1}^m t_{j}^{2} \right)^{\frac{1}{2}}$, this is equivalent to $\varepsilon\|t\|_{2}$.
Constraints \eqref{min-lin-News} linearize the $\min\{x, \widehat\xi_i\}$ term. Since its coefficient in the objective, $(g-v-b)$, is component-wise negative, $w_i$ will be driven to its upper bound, $w_i = \min\{x, \widehat\xi_i\}$, at optimality.
Constraints \eqref{gate-y1-News} and \eqref{gate-y2-News} use the binary variable $y_{j,k}$ to encode the relationship between $x_k$ and $\zeta_{j,k}$:
\begin{itemize}
    \item If $y_{j,k}=1$: \eqref{gate-y1-News} becomes $x_k \ge \zeta_{j,k}+\delta_k$ (i.e., $x_k > \zeta_{j,k}$); \eqref{gate-y2-News} becomes $x_k \le \zeta_{j,k}+M_k$ (non-binding).
    \item If $y_{j,k}=0$: \eqref{gate-y1-News} becomes $x_k \ge \zeta_{j,k}+\delta_k - M_k$ (non-binding); \eqref{gate-y2-News} becomes $x_k \le \zeta_{j,k}$.
\end{itemize}
The small $\delta_k > 0$ breaks the tie at $x_k=\zeta_{j,k}$ and forces $y_{j,k}=0$, selecting the subgradient component $b_k$ (the one with larger magnitude, as $b_k \ge v_k-g_k = |g_k-v_k|$).
Constraint \eqref{z-linear-News} then constructs the worst-case subgradient component $z_{j,k}$ based on $y_{j,k}$:
\begin{itemize}
    \item If $y_{j,k}=1$ ($x_k > \zeta_{j,k}$), then $z_{j,k} = b_k + (g_k-v_k-b_k) = g_k-v_k$.
    \item If $y_{j,k}=0$ ($x_k \le \zeta_{j,k}$), then $z_{j,k} = b_k$.
\end{itemize}
This exactly matches the components of the max-norm subgradient from $\partial_\xi F(x,\zeta_j)$.
Finally, \eqref{cone-norm-News} is the standard second-order cone constraint, $t_j \ge \left\|z_j\right\|_2 r_{j}^{\frac{1}{2}}$, which, by optimality, will be tight.
\end{proof}

\subsubsection{Analysis of Standard WDRO Approaches ($p=1$ and $p=2$)}\label{Lipproblem}

This problem setting exemplifies a scenario where standard WDRO approaches are ill-suited, motivating our SBR-SAA alternative.
Let $\widehat{\mathbb{P}}_n=\tfrac{1}{n}\sum_{j=1}^n \delta_{\widehat\xi_j}$ be the empirical distribution and
\(
\mathcal{B}_{\varepsilon}^{p}(\widehat{\mathbb{P}}_n):=\Big\{\mathbb{Q}:\ W_p(\mathbb{Q},\widehat{\mathbb{P}}_n)\le \varepsilon\Big\},
\)
be the $p$-Wasserstein ball with the $\ell_2$-norm as the ground cost. The WDRO problem is
\begin{equation}\label{eq:WDRO-News}
\min_{x\in\mathcal{X}}\ \sup_{\mathbb{Q}\in\mathcal{B}_{\varepsilon}^{p}(\widehat{\mathbb{P}}_n)}\ \mathbb{E}_{\xi\sim\mathbb{Q}}[F(x,\xi)].
\end{equation}

\paragraph{Case $p=1$:} The 1-WDRO formulation collapses to the SAA solution.
\begin{theorem}[1-WDRO Equivalence to SAA]\label{thm:WDRO-p1-News}
Under assumptions \eqref{eq:assumptions-News}, the $1$-WDRO problem \eqref{eq:WDRO-News} with $p=1$ is equivalent to
\begin{equation}\label{eq:WDRO-p1-final-News}
\min_{x\in\mathcal{X}}\ \frac{1}{n}\sum_{j=1}^n F(x,\widehat\xi_j)\ +\ \varepsilon\,\left\|b\right\|_2.
\end{equation}
Consequently, the optimal decision $\hat{x}$ of the $1$-WDRO problem is identical to the optimal decision of the SAA problem.
\end{theorem}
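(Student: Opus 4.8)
The plan is to pass to the strong-duality reformulation of the $1$-WDRO problem and then exploit the fact, recorded in Corollary~\ref{cor:lipschitz-News}, that $F(x,\cdot)$ is globally $\|b\|_2$-Lipschitz with respect to $\ell_2$ for \emph{every} $x\in\mathcal{X}$, with Lipschitz constant attained in the limit as demand grows. Since $F(x,\cdot)$ is continuous (piecewise affine) in $\xi$, it is upper semicontinuous, so Theorem~\ref{Thm:ReformulacionDROWInterno} applies with $p=1$ and ground metric $\mathbf{d}(\xi,\zeta)=\|\xi-\zeta\|_2$, rewriting \eqref{eq:WDRO-News} (with $p=1$) as
\[
\inf_{x\in\mathcal{X},\ \lambda\ge 0}\Big\{\lambda\varepsilon+\tfrac1n\textstyle\sum_{i=1}^{n}s_i(x,\lambda)\Big\},
\qquad
s_i(x,\lambda):=\sup_{\xi\in\Xi}\big(F(x,\xi)-\lambda\|\xi-\widehat\xi_i\|_2\big),
\]
where the choice $s_i=s_i(x,\lambda)$ is forced at optimality of the inner program.

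Next I would evaluate $s_i(x,\lambda)$ by splitting on the size of $\lambda$. If $\lambda\ge\|b\|_2$, then Corollary~\ref{cor:lipschitz-News} gives $F(x,\xi)-F(x,\widehat\xi_i)\le\|b\|_2\|\xi-\widehat\xi_i\|_2\le\lambda\|\xi-\widehat\xi_i\|_2$ for all $\xi\in\Xi$, with equality at $\xi=\widehat\xi_i\in\Xi$, so $s_i(x,\lambda)=F(x,\widehat\xi_i)$. If $\lambda<\|b\|_2$, I would exhibit the admissible ray $\xi(t)=\widehat\xi_i+t\,b/\|b\|_2$, $t\ge 0$, which stays in $\Xi=\mathbb{R}_+^d$ because $\widehat\xi_i\ge 0$ and $b>0$ (from $b\ge v-g>0$ in \eqref{eq:assumptions-News}); since $x\le a$ is bounded, $\xi(t)\ge x$ componentwise for all $t$ large, and on that range \eqref{eq:F-compacta-News} gives $\min\{x,\xi(t)\}=x$, hence $F(x,\xi(t))=\mathrm{const}(x)+t\|b\|_2$, while $\|\xi(t)-\widehat\xi_i\|_2=t$. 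Thus $F(x,\xi(t))-\lambda\|\xi(t)-\widehat\xi_i\|_2=\mathrm{const}(x)+t(\|b\|_2-\lambda)\to+\infty$, so $s_i(x,\lambda)=+\infty$.

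Then I would minimize over $\lambda$: for fixed $x$, the dual objective is $+\infty$ for $\lambda<\|b\|_2$ and equals $\lambda\varepsilon+\tfrac1n\sum_i F(x,\widehat\xi_i)$ — nondecreasing in $\lambda$ — for $\lambda\ge\|b\|_2$, so the inner infimum is attained at $\lambda=\|b\|_2$ and equals $\tfrac1n\sum_i F(x,\widehat\xi_i)+\varepsilon\|b\|_2$. Minimizing over $x\in\mathcal{X}$ yields exactly \eqref{eq:WDRO-p1-final-News}. Finally, since $\varepsilon\|b\|_2$ is a constant independent of $x$, the set of minimizers of \eqref{eq:WDRO-p1-final-News} coincides with that of the plain SAA problem $\min_{x\in\mathcal{X}}\tfrac1n\sum_j F(x,\widehat\xi_j)$, which gives the stated equivalence of optimal decisions.

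The main obstacle is the case $\lambda<\|b\|_2$ above: one must produce an unbounded direction lying in $\Xi$ along which $F(x,\cdot)$ eventually realizes its maximal affine slope $\|b\|_2$. The direction $b/\|b\|_2$ works precisely because $b>0$ keeps the ray in $\mathbb{R}_+^d$ and, past the bounded region of kinks $\{\xi_k=x_k\}_{k}$, $F(x,\cdot)$ is affine with gradient $b$; the remaining care is only in checking that the finitely many kink transitions contribute bounded corrections and that $\|b\|_2$ is genuinely attained (so the infimum over $\lambda$ is a minimum). Everything else is routine.
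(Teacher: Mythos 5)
Your proof is correct, but it takes a genuinely different route from the paper's. The paper argues directly on the primal worst-case expectation: it invokes Kantorovich--Rubinstein duality together with Corollary~\ref{cor:lipschitz-News} to assert
$\sup_{\mathbb{Q}\in\mathcal{B}_{\varepsilon}^{1}(\widehat{\mathbb{P}}_n)}\mathbb{E}_{\mathbb{Q}}[F(x,\xi)]=\mathbb{E}_{\widehat{\mathbb{P}}_n}[F(x,\xi)]+\varepsilon\,\gamma_{x,F}$
in one line, and then notes that the additive constant $\varepsilon\|b\|_2$ does not affect the argmin. You instead pass to the strong-duality reformulation of Theorem~\ref{Thm:ReformulacionDROWInterno} and compute the inner suprema $s_i(x,\lambda)$ by a case split on $\lambda$ versus $\|b\|_2$, exhibiting the admissible ray $\xi(t)=\widehat\xi_i+t\,b/\|b\|_2\subset\mathbb{R}_+^d$ along which $F(x,\cdot)$ eventually grows at exactly the rate $\|b\|_2$. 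This buys you something the paper's terse argument leaves implicit: Kantorovich--Rubinstein duality on its own only yields the upper bound $\mathbb{E}_{\mathbb{Q}}[F]\le\mathbb{E}_{\widehat{\mathbb{P}}_n}[F]+\varepsilon\gamma_{x,F}$, and the matching lower bound (i.e., that the worst case is actually achieved, not merely bounded) requires precisely the unbounded ascent direction inside the support that your ray argument supplies. In that sense your proof is a more careful, self-contained version of the same result; the paper's is shorter but relies on the reader accepting that the Lipschitz modulus is realized within $\Xi$. Both correctly conclude that the minimizers coincide with those of SAA because the regularization term is independent of $x$.
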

\begin{proof}
By Kantorovich-Rubinstein duality and Corollary \ref{cor:lipschitz-News}, the inner supremum of the WDRO problem is
\[
\sup_{\mathbb{Q}\in\mathcal{B}_{\varepsilon}^{1}(\widehat{\mathbb{P}}_n)} \mathbb{E}_{\xi\sim\mathbb{Q}}[F(x,\xi)] = \mathbb{E}_{\xi\sim\widehat{\mathbb{P}}_n}[F(x,\xi)] + \varepsilon \cdot \gamma_{x,F} = \frac{1}{n}\sum_{j=1}^n F(x,\widehat\xi_j) + \varepsilon \left\|b\right\|_2.
\]
Since the regularization term $\varepsilon \left\|b\right\|_2$ is a constant that does not depend on $x$, minimizing this objective yields the same solution as minimizing the SAA objective alone.
\end{proof}

\paragraph{Case $p=2$:} The multi-product newsvendor problem is a classic two-stage stochastic linear program. As established by \cite{HanasusantoGraniKuhn2018}, two-stage stochastic linear programs under 2-WDRO (with the $\ell_2$-norm ground cost) are generally NP-hard. Approximations exist, such as the copositive optimization approach in \cite{HanasusantoGraniKuhn2018} or the linear programming upper bound in \cite{WangShanshanDelageErick2024}, but these remain computationally demanding or provide no guarantee on the gap between the approximation and the true 2-WDRO solution.

Given that 1-WDRO is equivalent to SAA and 2-WDRO is computationally intractable, this problem serves as a perfect testbed for our SBR-SAA method as a tractable and effective alternative.

\subsubsection{Numerical Experiment: Design and Setup} \label{subsec:DesignAndSetupNewsvendorMulti}

We test our SBR-SAA method against SAA (which we know is equivalent to 1-WDRO).

\paragraph{Data Generation.}
We set $d=5$ products. The economic parameters are set as:
$c = (4, 5.5, 6, 8, 9)^\top$, $v = (11, 13, 14, 17, 19)^\top$, $g = (1, 1.5, 1.5, 2, 2.5)^\top$, $b = (12, 13.5, 14.5, 17, 18.5)^\top$, and $a = (12, 18, 24, 30, 36)^\top$.
These parameters satisfy the economic assumptions $v>g$ and $b \ge v-g$, representing a context where shortage costs are significant. The bounds $a$ are set to be approximately $1.5$ times the marginal means, ensuring the capacity constraints are relevant but not uniformly binding.
Demands $\xi$ are drawn from a multivariate Lognormal distribution, coupling Lognormal marginals with a Gaussian copula. The marginals are defined by means $\mu = (8, 12, 16, 20, 24)^\top$ and coefficients of variation $\mathrm{CV} = (0.4, 0.5, 0.5, 0.6, 0.4)^\top$. The copula uses a common correlation $\rho=0.3$. This design is standard for modeling non-negative, skewed demands with moderate positive correlation.
We use $n=50$ samples for training and evaluate performance on $10^5$ out-of-sample scenarios drawn from the true distribution $\mathbb{P}$.

\paragraph{SBR-SAA Scenario Selection.}
For a given sample $\hat{\Xi}_{n}$, we select the $m$ scenarios $\Xi_{\mathrm{reg}} = \{\zeta_1, \dots, \zeta_m\}$ and weights $\{r_j\}$ to emulate a WDRO approach. This is done in two steps:
\begin{enumerate}
\item  We apply $k$-medoids clustering (with $k=m$) to $\hat{\Xi}_{n}$ to find $m$ representative medoids, which become our scenarios $\{\zeta_j\}$.
\item  We find the weights $\{r_j\}$ that minimize the $1$-Wasserstein distance between the empirical measure $\hat{\mathbb{P}}_n$ and the new discrete measure $\mathbb{P}_{r,m} := \sum_{j=1}^m r_j \delta_{\zeta_j}$. That is, we solve $\min_{r \ge 0, \sum r_j = 1} W_1(\hat{\mathbb{P}}_n, \mathbb{P}_{r,m})$.
\end{enumerate}
This principled approach creates a compressed $m$-point distribution that is as close as possible to the full $n$-point empirical measure. We test two versions: 5-SBR-SAA ($m=5$) and 15-SBR-SAA ($m=15$).

\paragraph{Performance Metrics.}
We aim to show that SBR-SAA produces solutions $\hat{x}_{m,n}$ that outperform the SAA solution $\hat{x}_{\mathrm{SAA}}$ out-of-sample. "Better" performance means both a lower expected cost $\mathbb{E}_{\xi\sim\mathbb{P}}[F(\hat{x},\xi)]$ and a lower risk, which we measure as the tail risk premium $\mathrm{CVaR}_{5\%,\xi\sim\mathbb{P}}[F(\hat{x},\xi)] - \mathbb{E}_{\xi\sim\mathbb{P}}[F(\hat{x},\xi)]$.
For a given sample $\hat{\Xi}_{n}$, we define the out-of-sample performance curve $\hat{\mathcal{C}}_{m,n}$ by solving the SBR-SAA problem for $\varepsilon$ over a logarithmic grid $\mathcal{E}$:
\begin{equation}
\hat{\mathcal{C}}_{m,n}=\left\{ \left(
    \begin{multlined}[c]
        \mathrm{CVaR}_{5\%,\xi\sim\mathbb{P}}\left[F(\hat{x}_{m,n},\xi)\right] \\
        - \mathbb{E}_{\xi\sim \mathbb{P}}\left[F(\hat{x}_{m,n},\xi)\right]
    \end{multlined},
    \mathbb{E}_{\xi\sim \mathbb{P}}\left[F(\hat{x}_{m,n},\xi)\right]
    \right)
    \: : \:
    \begin{array}{@{}l@{}}
        \hat{x}_{m,n} \text{ is a solution from} \\
        \text{SBR-SAA for } \varepsilon\in\mathcal{E}
    \end{array}
\right\},
\label{eq:performance_set_1_News}
\end{equation}
The SAA solution corresponds to $\varepsilon=0$. We expect to find $\varepsilon > 0$ that yield solutions dominating the SAA point (i.e., are to the "south-west" of it).

\subsubsection{Results and Discussion}

Figure \ref{fig:Newsvendor_Curve_Single} plots the performance curve $\hat{\mathcal{C}}_{m,n}$ for a single, representative training sample. The SAA solution ($\varepsilon=0$) is the rightmost point on each curve. As $\varepsilon$ increases (moving left along the curves), the SBR-SAA solutions achieve both a lower out-of-sample expected cost and a lower tail risk premium. This demonstrates that for a typical sample, SBR-SAA finds solutions that are strictly dominant over SAA. Furthermore, using more representative scenarios ($m=15$) yields a superior curve (it dominates the $m=5$ curve), suggesting a performance benefit to a better approximation of the empirical measure.

\begin{figure}[t]
    \centering
    \includegraphics[width=0.6\linewidth]{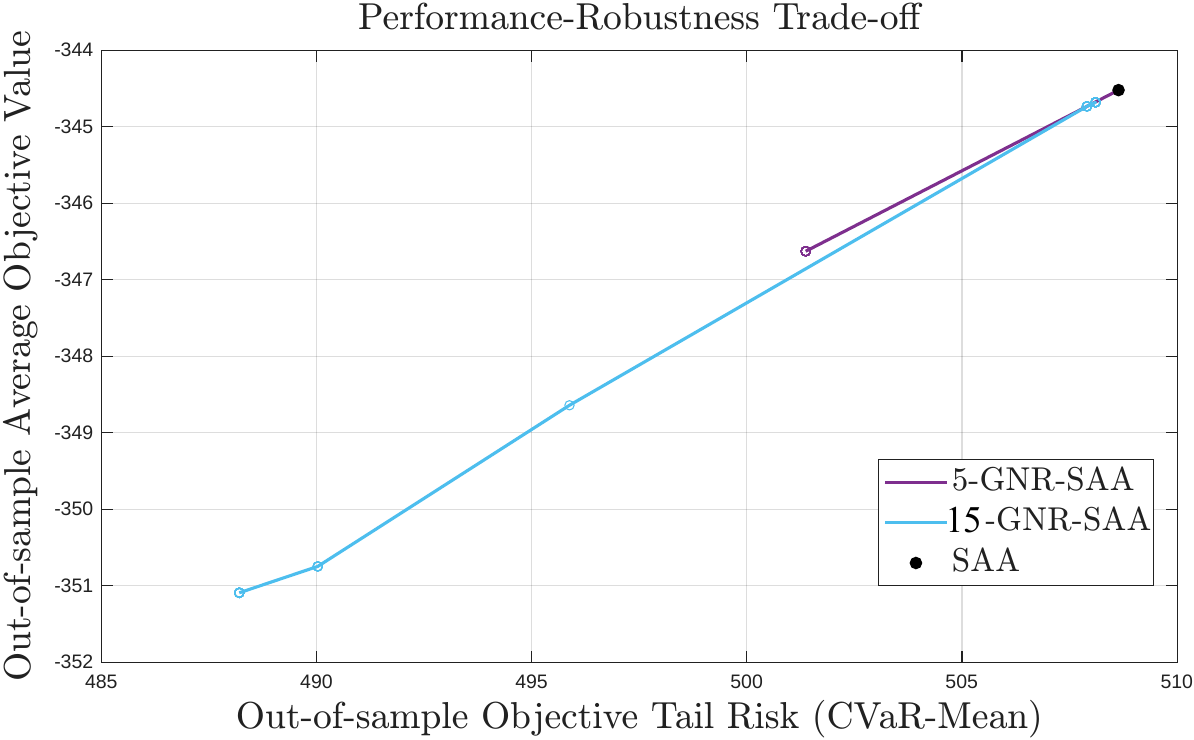} 
    \caption{Out-of-sample performance curves $\hat{\mathcal{C}}_{m,n}$ for $n=50$ and $m=5$ (5-SBR-SAA) and $m=15$ (15-SBR-SAA), based on a single training sample. The y-axis represents the out-of-sample expected cost, while the x-axis represents the out-of-sample tail risk premium (CVaR$_{5\%}$ - Mean).}
    \label{fig:Newsvendor_Curve_Single}
\end{figure}

To ensure this result is not an artifact of a single sample, we repeat the experiment 200 times, each with a new training sample $\hat{\Xi}_n$ of size $n=50$. For each replication, we find the $\varepsilon_{m,n}^*$ that minimizes the out-of-sample expected cost $\mathbb{E}_{\xi\sim\mathbb{P}}[F(\hat{x}_{m,n},\xi)]$ along the curve $\hat{\mathcal{C}}_{m,n}$. We then plot the performance of this "best" SBR-SAA solution relative to the SAA solution for that same sample.
We define the relative performance point $\hat{P}_{m,n}$ as:
\[
\hat{P}_{m,n} = (\hat{P}_{m,n,x}, \hat{P}_{m,n,y})
\]
where
\begin{align*}
\hat{P}_{m,n,x} &:= \left(\mathrm{CVaR}_{\alpha}(F(\hat{x}_{m,n},\xi)) - \mathbb{E}_{\xi}(F(\hat{x}_{m,n},\xi))\right) - \left(\mathrm{CVaR}_{\alpha}(F(\hat{x}_{\mathrm{SAA}},\xi)) - \mathbb{E}_{\xi}(F(\hat{x}_{\mathrm{SAA}},\xi))\right) \\
\hat{P}_{m,n,y} &:= \mathbb{E}_{\xi\sim\mathbb{P}}[F(\hat{x}_{m,n},\xi)] - \mathbb{E}_{\xi\sim\mathbb{P}}[F(\hat{x}_{\mathrm{SAA}},\xi)]
\end{align*}
(with $\alpha=5\%$). Negative values for $\hat{P}_{m,n,x}$ or $\hat{P}_{m,n,y}$ indicate that SBR-SAA improved upon SAA in that dimension (tail risk or expected cost, respectively).

Figure \ref{fig:Newsvendor_Scatter_200} shows the scatter plot of these 200 $\hat{P}_{m,n}$ points for $m=5$ and $m=15$. The results are conclusive. In both cases, the vast majority of points lie in the bottom-left quadrant (the "south-west" quadrant), indicating that SBR-SAA simultaneously improves both the expected cost and the tail risk premium compared to SAA across a wide range of samples. The fitted regression lines, with their clear negative slopes, confirm this strong trend. Comparing (a) and (b), the cluster of points for $m=15$ is visibly lower and further to the left than for $m=5$, confirming that a better $m$-point approximation of the empirical measure leads to more robust and superior out-of-sample solutions.

\begin{figure}[t]
    \centering
    \begin{tabular}{cc}
    \includegraphics[width=0.49\linewidth]{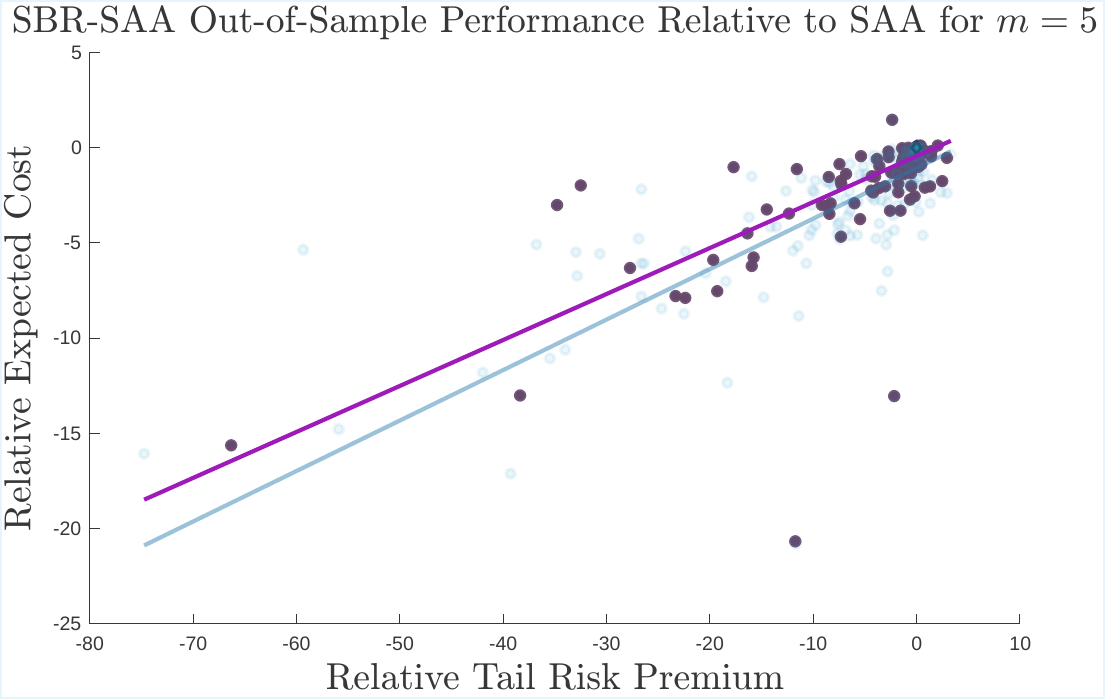} & 
    \includegraphics[width=0.49\linewidth]{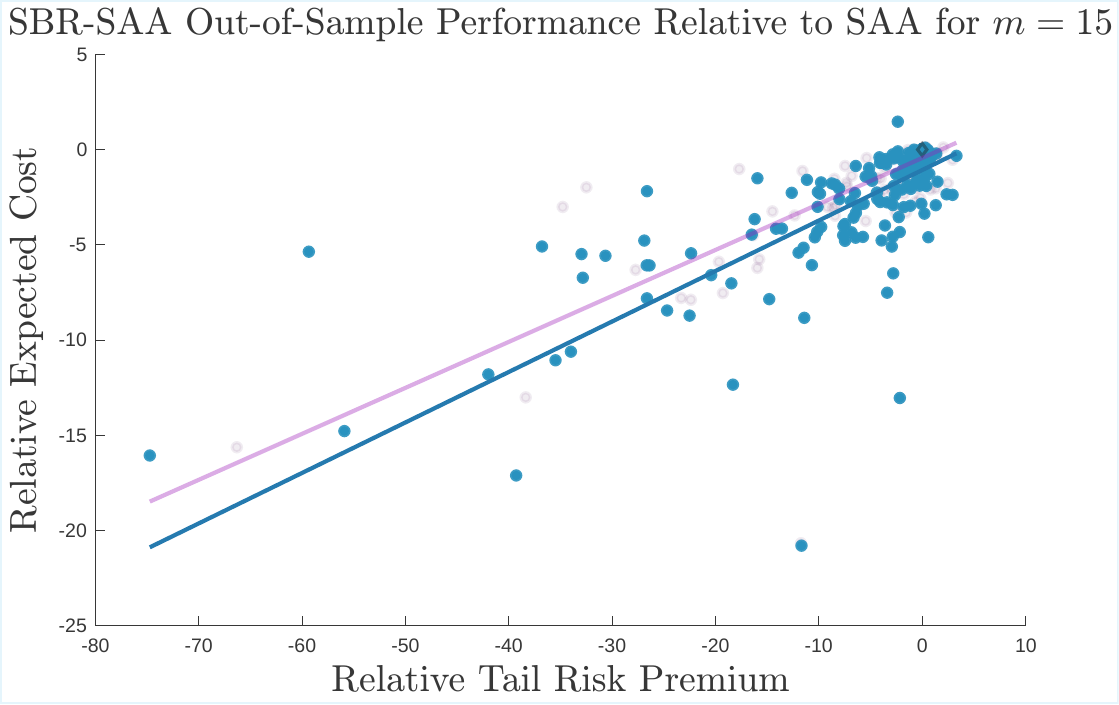}\\ 
     (a) 5-SBR-SAA ($m=5$) & (b) 15-SBR-SAA ($m=15$)
    \end{tabular}
    \caption{Out-of-sample performance of SBR-SAA relative to SAA across 200 replications ($n=50$). Each point represents the relative performance of the SBR-SAA solution that minimized the out-of-sample expected cost for a given sample. Negative values on both axes indicate an improvement over SAA. (a) $m=5$ scenarios. (b) $m=15$ scenarios.}
    \label{fig:Newsvendor_Scatter_200}
\end{figure}

%


\subsection{Mean-Risk Portfolio Optimization}
\label{subsec:Portfolio}

To demonstrate the second key use of our framework, its role as a tool for targeted robustness against adverse scenarios, we consider a mean-risk portfolio optimization problem with real financial data. This setting is natural for our purposes: it features an inherent trade-off between expected performance and tail risk, and it allows us to test how effectively scenario-based regularization leverages historically adverse episodes compared to standard SAA and Wasserstein DRO benchmarks.


\subsubsection{Problem Formulation}
\label{subsec:PortfolioProblem}

We consider the classical mean–risk portfolio optimization problem. Let $w \in \mathbb{R}^d$ be the vector of weights allocated across $d$ financial assets. The set of feasible portfolios $\mathcal{W}$ is the standard simplex:
\[
\mathcal{W}=\{w\in\mathbb{R}^d: \mathbf{1}^\top w=1,\; w\ge 0\}.
\]
The objective is to find a portfolio that minimizes a weighted sum of the expected loss (negative return) and the Conditional Value-at-Risk (CVaR) of this loss. Formally, we address the problem:
\begin{equation}\label{eqn:MeanRiskTrue}
  \min_{w\in\mathcal{W}} \mathbb{E}_{\xi\sim\mathbb{P}}[-w^\top \xi]+\rho\cdot \mathrm{CVaR}_{\alpha,\xi\sim\mathbb{P}}(-w^\top \xi),
\end{equation}
\sloppy where $\xi$ is the random vector of asset returns, $\rho > 0$ is a risk-aversion parameter, and $\mathrm{CVaR}_{\alpha,\xi\sim\mathbb{P}}(-w^\top \xi)$ is the CVaR at the $\alpha$ confidence level, representing the average of the $\alpha\%$ worst portfolio losses.

Following the seminal work of \cite{Rockafellar2000}, the CVaR term can be reformulated as the minimum of an expected value function by introducing an auxiliary scalar variable $\tau$. This allows problem \eqref{eqn:MeanRiskTrue} to be cast into the general stochastic optimization framework of \eqref{eqn:SP_intro}:
\begin{equation}\label{eqn:StocasticFormMeanRiskTrue}
  \min_{w\in\mathcal{W},\tau\in\mathbb{R}} \mathbb{E}_{\xi\sim\mathbb{P}}\left[-w^\top \xi+\rho\left(\tau+\frac{1}{\alpha}\Big(-w^\top \xi-\tau\Big)_+\right)\right],
\end{equation}
where $(\cdot)_+ = \max\{0, \cdot\}$. Within our framework, the decision variable is the concatenated vector $x=(w,\tau)$, and the objective function $F(x,\xi)$ is:
\begin{equation}\label{eq:F_portfolio}
F(w,\tau,\xi)=-w^\top \xi+\rho\left(\tau+\frac{1}{\alpha}\Big(-w^\top \xi-\tau\Big)_+\right).
\end{equation}

\subsubsection{Subgradient characterization for the mean–CVaR loss}
\label{subsec:SubgradientsPortfolio}

We characterize here the (sub)gradients of $F(w,\tau,\zeta)$ with respect to the return vector $\zeta$, as these are the primitives driving our scenario–based regularizers. Let \(
\phi(z,\tau)\;=\;\tau+\frac{1}{\alpha}(z-\tau)_+\), so that \(F(w,\tau,\zeta)\;=\;-w^\top \zeta+\rho\,\phi(-w^\top\zeta,\tau).
\)
The subdifferential of $\phi$ with respect to $z$ is
\[
\partial_z \phi(z,\tau)=
\begin{cases}
 \dfrac{1}{\alpha}, & z>\tau,\\[0.6ex]
 0, & z<\tau,\\[0.6ex]
 \left[0,\dfrac{1}{\alpha}\right], & z=\tau.
\end{cases}
\]

\begin{proposition}[Subgradient structure of $F(w,\tau,\cdot)$]\label{prop:SubgradPortfolio}
For any $(w,\tau)$ and any scenario $\zeta\in\mathbb{R}^d$,
\[
\partial_\zeta F(w,\tau,\zeta)\;=\;-\,w\Bigl[1+\rho\,\partial_z \phi(-w^\top\zeta,\tau)\Bigr].
\]
Equivalently, with the Euclidean norm $\|\cdot\|_2$,
\begin{enumerate}
    \item[(i)] If  $-w^\top\zeta>\tau$ then  $\nabla_\zeta F(w,\tau,\zeta) \;=\; -\,w\!\left(1+\tfrac{\rho}{\alpha}\right)$, and  $\bigl\|\nabla_\zeta F(w,\tau,\zeta)\bigr\|_2 \;=\; \Bigl(1+\tfrac{\rho}{\alpha}\Bigr)\|w\|_2$.
\item[(ii)] If $-w^\top\zeta<\tau$, then  $\nabla_\zeta F(w,\tau,\zeta) \;=\; -\,w$, and $ \bigl\|\nabla_\zeta F(w,\tau,\zeta)\bigr\|_2 \;=\; \|w\|_2$.
\item[(iii)] If $-w^\top\zeta=\tau$, then $\partial_\zeta F(w,\tau,\zeta)=\bigl\{-\,w\,(1+\rho\eta):\ \eta\in[0,\tfrac{1}{\alpha}]\bigr\}$, and thus  $\bigl\|\partial_\zeta F(w,\tau,\zeta)\bigr\|_2\ \in\ \bigl[\|w\|_2,\ (1+\tfrac{\rho}{\alpha})\|w\|_2\bigr]$.
\end{enumerate}
\end{proposition}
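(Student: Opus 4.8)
The plan is to read $F(w,\tau,\cdot)$ as the sum of a linear functional in $\zeta$ and a convex function composed with an affine map, apply the (affine) subdifferential chain rule together with the sum rule, and then specialize using the three-case description of $\partial_z\phi(z,\tau)$ already recorded above. The norm statements in (i)--(iii) then follow by inspection, since each is just a nonnegative scalar multiple of the fixed vector $-w$.

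First I would verify that $F(w,\tau,\cdot)$ is convex in $\zeta$: the map $z\mapsto(z-\tau)_+=\max\{0,z-\tau\}$ is convex and finite-valued on $\mathbb{R}$, hence so is $z\mapsto\phi(z,\tau)$, and composing with the affine map $\zeta\mapsto -w^\top\zeta$ preserves convexity; adding the linear term $-w^\top\zeta$ keeps convexity. Thus $\partial_\zeta F(w,\tau,\zeta)$ denotes the ordinary convex subdifferential, the sum rule applies (one summand is differentiable), and the chain rule applies without any constraint qualification because $\phi(\cdot,\tau)$ is finite everywhere. Concretely, for a finite-valued convex $h:\mathbb{R}\to\mathbb{R}$ and $a\in\mathbb{R}^d$ one has $\partial_\zeta[h(a^\top\zeta)]=a\,\partial h(a^\top\zeta)$; taking $a=-w$, $h(\cdot)=\phi(\cdot,\tau)$ gives $\partial_\zeta[\phi(-w^\top\zeta,\tau)]=-w\,\partial_z\phi(-w^\top\zeta,\tau)$. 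Combining with the gradient $-w$ of the linear part and the factor $\rho$ yields
\[
\partial_\zeta F(w,\tau,\zeta)=-w+\rho\,(-w)\,\partial_z\phi(-w^\top\zeta,\tau)=-w\bigl[1+\rho\,\partial_z\phi(-w^\top\zeta,\tau)\bigr],
\]
understood as the scalar set $1+\rho\,\partial_z\phi(-w^\top\zeta,\tau)$ multiplying the vector $-w$. This is exactly the claimed identity.

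Finally I would substitute the three branches of $\partial_z\phi$. When $-w^\top\zeta>\tau$, $\partial_z\phi=\{1/\alpha\}$ is a singleton, so $F(w,\tau,\cdot)$ is differentiable at $\zeta$ with $\nabla_\zeta F=-w(1+\rho/\alpha)$ and $\|\nabla_\zeta F\|_2=(1+\rho/\alpha)\|w\|_2$; when $-w^\top\zeta<\tau$, $\partial_z\phi=\{0\}$, so $\nabla_\zeta F=-w$ and $\|\nabla_\zeta F\|_2=\|w\|_2$; when $-w^\top\zeta=\tau$, $\partial_z\phi=[0,1/\alpha]$, so $\partial_\zeta F(w,\tau,\zeta)=\{-w(1+\rho\eta):\eta\in[0,1/\alpha]\}$, and since $\eta\mapsto(1+\rho\eta)\|w\|_2$ is nondecreasing on $[0,1/\alpha]$, the Euclidean norms of its elements fill the interval $[\|w\|_2,(1+\rho/\alpha)\|w\|_2]$.

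There is no genuine obstacle here; the proof is short once the chain rule is invoked. The only points requiring care are the sign-and-transpose bookkeeping in the chain rule — the inner affine map is $\zeta\mapsto -w^\top\zeta$, so the outer multiplier is $-w$ rather than $w$ — and interpreting $1+\rho\,\partial_z\phi$ as set arithmetic, so that the displayed set identity (and in particular the singleton cases giving genuine differentiability) is literally correct.
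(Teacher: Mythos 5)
Your proof is correct and follows essentially the same route as the paper's: decompose $F(w,\tau,\cdot)$ into a linear part plus $\rho\,\phi(-w^\top\cdot\,,\tau)$, apply the sum and affine chain rules for subdifferentials with inner gradient $-w$, and then read off the three regimes from $\partial_z\phi$. The only difference is that you explicitly verify convexity and the applicability of the calculus rules, which the paper takes for granted; this is a welcome but inessential addition.
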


\begin{proof}
Set $z(\zeta)=-w^\top\zeta$. Then $\nabla_\zeta z(\zeta)=-w$. By the chain rule for subdifferentials,
\[
\partial_\zeta F(w,\tau,\zeta)
=\nabla_\zeta(-w^\top\zeta)+\rho\,\partial_\zeta\phi\bigl(z(\zeta),\tau\bigr)
= -w+\rho\bigl(\partial_z\phi(z(\zeta),\tau)\bigr)\,(-w),
\]
which yields the stated form and the three regimes according to the sign of $z(\zeta)-\tau$. In the kink case $z(\zeta)=\tau$, the set $\partial_z\phi(\tau,\tau)=[0,1/\alpha]$ implies
$\partial_\zeta F(w,\tau,\zeta)=\{-w(1+\rho\eta):\eta\in[0,1/\alpha]\}$, and the Euclidean norm ranges from $\|w\|_2$ to $(1+\rho/\alpha)\|w\|_2$. \qedhere
\end{proof}

The portfolio loss $-w^\top\zeta$ partitions scenarios into a \emph{non–tail region} (case (ii)) with sensitivity $\|w\|_2$ and a \emph{tail region} (case (i)) with amplified sensitivity $\bigl(1+\rho/\alpha\bigr)\|w\|_2$. At the quantile boundary (case (iii)) the subgradient magnitude can take any value in the interval $\bigl[\|w\|_2,\,(1+\rho/\alpha)\|w\|_2\bigr]$. In line with the robustness motivation in Section~\ref{subsec:FormulationGNR}, namely, to penalize the worst local sensitivity, we adopt the convention of selecting, at the threshold, the maximal–norm representative of the subdifferential, i.e., the element with magnitude $(1+\rho/\alpha)\|w\|_2$.

\subsubsection{Scenario-based regularization and tractable reformulations}
\label{subsec:ReformulationsPortfolio}

Let $\{\zeta_j\}_{j=1}^m$ denote the chosen adverse scenarios (constructed from historical data as explained below), and let $r_j\ge 0$ with $\sum_{j=1}^m r_j = 1$ be fixed weights. For this problem we employ the Euclidean norm $\|\cdot\|_2$ on gradients. The general SBR-SAA formulation introduced earlier considers a regularizer of the form
\(
\widehat{R}_m(x)
=
\left(\sum_{j=1}^m r_j \big\|\nabla_\xi F(x,\zeta_j)\big\|_2^2\right)^{1/2},
\)
which corresponds to the gradient-norm aggregation suggested by the asymptotic expansion in Lemma~\ref{Lemma:taylorExpansion_intro_revised} for intermediate Wasserstein orders. Instantiated in our portfolio setting, this yields the model
\begin{equation}\label{eq:AdversoSAAMeanRisk_quad}
\min_{w\in\mathcal{W},\,\tau\in\mathbb{R}}
\frac{1}{n}\sum_{i=1}^{n} F(w,\tau,\widehat{\xi}_i)
+
\varepsilon\,
\left(\sum_{j=1}^{m} r_j \big\|\nabla_\xi F(w,\tau,\zeta_j)\big\|_2^2\right)^{1/2},
\end{equation}
which we refer to in this section as the \emph{quadratic-aggregation} SBR-SAA portfolio model.

In addition to \eqref{eq:AdversoSAAMeanRisk_quad}, we also consider a closely related variant obtained by replacing the quadratic aggregation of gradient norms with a linear aggregation,
\begin{equation}\label{eq:AdversoSAAMeanRisk}
\min_{w\in\mathcal{W},\,\tau\in\mathbb{R}}
\frac{1}{n}\sum_{i=1}^{n} F(w,\tau,\widehat{\xi}_i)
+
\varepsilon\,
\sum_{j=1}^{m} r_j \big\|\nabla_\xi F(w,\tau,\zeta_j)\big\|_2.
\end{equation}
We refer to this linear-aggregation variant as SBR-SAA-L. It preserves the same modeling principles, scenario-based penalization of sensitivity to adverse returns, while providing a uniform aggregation of the selected scenarios (cf.\ Lemma~\ref{Lemma:taylorExpansion_intro_revised}). From an implementation perspective, \eqref{eq:AdversoSAAMeanRisk} is also convenient, as the regularizer is directly representable by second-order cone constraints coupled with binary variables.

\begin{proposition} \label{prop:RefAdversoSAAMeanRisk_sec}
Let $K=\max\{\max_{j\in[m]}\|\zeta_{j}\|_{\infty},\max_{i\in[n]}\|\widehat{\xi}_{i}\|_{\infty}\}$, let $C>0$, and define
\[
M_{1}:=2(K+C)+\delta,
\qquad
M_{2}:=1+K,
\qquad
M_{3}:=1+\frac{\rho}{\alpha},
\]
for a small $\delta > 0$. Then problem \eqref{eq:AdversoSAAMeanRisk} (SBR-SAA-L) can be reformulated as the MISOCP
\begin{equation}\label{eqn:AdvFormulationV3}
\left\{
\begin{array}{cll}
\displaystyle \min_{w,\tau,\gamma,z}
&
\displaystyle
\frac{1}{n}\sum_{i=1}^{n} F(w,\tau,\widehat{\xi}_i)
+
\varepsilon\, \sum_{j=1}^{m} r_j \gamma_{j} &
\\[0.5ex]
\text{subject to}
&
w\in\mathcal{W},\ \tau\in\mathbb{R},\ \gamma\in\mathbb{R}^{m},\ z\in\{0,1\}^m, &
\\[0.5ex]
&
-w^\top \zeta_j - \tau \leq M_{1}(1-z_{j}),
& j\in[m],
\\
&
w^\top \zeta_j + \tau - \delta \leq M_{1}z_{j},
& j\in[m],
\\
&
\|w\|_2 - \gamma_{j} \leq M_2 (1-z_{j}),
& j\in[m],
\\
&
\|w\|_2 \left(1 + \tfrac{\rho}{\alpha}\right) - \gamma_{j} \leq M_3 z_j,
& j\in[m].
\end{array}
\right.
\end{equation}
and the problem \eqref{eq:AdversoSAAMeanRisk_quad} (SBR-SAA) can be reformulated as the MISOCP
\begin{equation}\label{eqn:AdvFormulationV3Quad}
\left\{
\begin{array}{cll}
\displaystyle \min_{w,\tau,\gamma,z,s}
&
\displaystyle
\frac{1}{n}\sum_{i=1}^{n} F(w,\tau,\widehat{\xi}_i)
+
\varepsilon\, \cdot s &
\\[0.5ex]
\text{subject to}
&
w\in\mathcal{W},\ \tau\in\mathbb{R},\ \gamma\in\mathbb{R}^{m},\ z\in\{0,1\}^m, &
\\[0.5ex]
&
-w^\top \zeta_j - \tau \leq M_{1}(1-z_{j}),
& j\in[m],
\\
&
w^\top \zeta_j + \tau - \delta \leq M_{1}z_{j},
& j\in[m],
\\
&
r_{j}^{\frac{1}{2}}\|w\|_2 - \gamma_{j} \leq M_2 (1-z_{j}),
& j\in[m],
\\
&
r_{j}^{\frac{1}{2}}\|w\|_2 \left(1 + \tfrac{\rho}{\alpha}\right) - \gamma_{j} \leq M_3 z_j,
& j\in[m],\\
& \|\gamma\|_2 \leq s.& 
\end{array}
\right.
\end{equation}
\end{proposition}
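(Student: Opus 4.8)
The plan is to exhibit \eqref{eqn:AdvFormulationV3} and \eqref{eqn:AdvFormulationV3Quad} as epigraphical linearizations of \eqref{eq:AdversoSAAMeanRisk} and \eqref{eq:AdversoSAAMeanRisk_quad}. The only ingredient of the regularizers that is not already affine or second‑order‑cone representable is the map $\zeta_j\mapsto\|\nabla_\xi F(w,\tau,\zeta_j)\|_2$. By Proposition~\ref{prop:SubgradPortfolio}, taking the maximal‑norm representative of the subdifferential at the threshold, this quantity equals $\|w\|_2$ when $-w^\top\zeta_j\le\tau$ and $(1+\rho/\alpha)\|w\|_2$ when $-w^\top\zeta_j>\tau$; in particular it is $\|w\|_2$ times a factor depending only on the sign of $-w^\top\zeta_j-\tau$. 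First I would record two a priori bounds that make the big‑$M$ constants admissible: since $\|w\|_1=1$ on $\mathcal{W}$, Hölder's inequality gives $|w^\top\zeta_j|\le\|\zeta_j\|_\infty\le K$ and $|w^\top\widehat\xi_i|\le K$, and $\|w\|_2\le\|w\|_1=1$; moreover, without loss of optimality $\tau$ may be confined to $[-C,C]$ for any $C\ge K$, because in the Rockafellar–Uryasev representation \eqref{eq:F_portfolio} the optimal $\tau$ is an $\alpha$‑quantile of the in‑sample losses $\{-w^\top\widehat\xi_i\}_{i=1}^n$, hence lies in $[-K,K]$.

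Next, for each $j\in[m]$ I introduce a binary $z_j$ with intended reading $z_j=1\iff -w^\top\zeta_j\le\tau$ and $z_j=0\iff -w^\top\zeta_j\ge\tau-\delta$. The gate constraints $-w^\top\zeta_j-\tau\le M_1(1-z_j)$ and $w^\top\zeta_j+\tau-\delta\le M_1 z_j$ enforce exactly this: when $z_j=1$ the first becomes $-w^\top\zeta_j\le\tau$ while the second is slack, and when $z_j=0$ the roles swap; slackness of the inactive inequality holds because $|{-w^\top\zeta_j-\tau}|$ and $|w^\top\zeta_j+\tau-\delta|$ are both bounded by $K+C+\delta\le 2(K+C)+\delta=M_1$, and the two reading ranges cover $\mathbb{R}$ so feasibility is never lost. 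The strictly positive $\delta$ is precisely what pins down a definite choice of $z_j$ at the quantile boundary $-w^\top\zeta_j=\tau$; it can be taken at machine precision. The sensitivity constraints then read $\|w\|_2-\gamma_j\le M_2(1-z_j)$ and $\|w\|_2(1+\rho/\alpha)-\gamma_j\le M_3 z_j$ in \eqref{eqn:AdvFormulationV3} (and carry an extra factor $r_j^{1/2}\le1$ in \eqref{eqn:AdvFormulationV3Quad}), so that $z_j=1$ forces $\gamma_j\ge\|w\|_2$ and $z_j=0$ forces $\gamma_j\ge(1+\rho/\alpha)\|w\|_2$; the inactive one is slack since $\|w\|_2\le1\le1+K=M_2$ and $(1+\rho/\alpha)\|w\|_2\le1+\rho/\alpha=M_3$. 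Because $\gamma_j$ carries a nonnegative objective coefficient (directly $\varepsilon r_j$ in the linear model, and through $\|\gamma\|_2\le s$ with coefficient $\varepsilon$ in the quadratic one), at any optimum $\gamma_j$ attains its lower bound, which is exactly $\|\nabla_\xi F(w,\tau,\zeta_j)\|_2$ (respectively $r_j^{1/2}\|\nabla_\xi F(w,\tau,\zeta_j)\|_2$) for the representative consistent with the threshold convention.

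Putting these together, the regularizer of \eqref{eq:AdversoSAAMeanRisk} equals $\sum_j r_j\gamma_j$ at optimality, yielding \eqref{eqn:AdvFormulationV3}; for \eqref{eq:AdversoSAAMeanRisk_quad} one obtains $\|\gamma\|_2=\bigl(\sum_j r_j\|\nabla_\xi F(w,\tau,\zeta_j)\|_2^2\bigr)^{1/2}=\widehat{R}_m(w,\tau)$, which the added second‑order cone $\|\gamma\|_2\le s$ forces $s$ to match, yielding \eqref{eqn:AdvFormulationV3Quad}. The empirical term $\tfrac1n\sum_i F(w,\tau,\widehat\xi_i)$, whose only nonlinearity is the convex piecewise‑linear $(\cdot)_+$ in \eqref{eq:F_portfolio}, is linearized in the standard way with $n$ epigraph variables, and each occurrence of $\|w\|_2$ is replaced by an auxiliary variable subject to one second‑order cone; hence both programs are MISOCPs with the same optimal value and the same optimal $(w,\tau)$ as their originals. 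I expect the main obstacle to be the careful bookkeeping verifying that the stated constants $M_1,M_2,M_3$ are simultaneously admissible on $\mathcal{W}\times[-C,C]$, together with the honest treatment of the $\delta$‑tolerance at the quantile threshold, which reproduces the maximal‑norm convention only up to an arbitrarily small relaxation rather than exactly.
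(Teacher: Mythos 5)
Your proposal is correct and follows essentially the same route as the paper's own proof: the same a priori bounds justifying $M_1,M_2,M_3$, the same binary gating of the two subgradient regimes with the $\delta$-tolerance at the quantile threshold, and the same argument that the nonnegative objective coefficients drive $\gamma_j$ (and $s$) to their lower bounds, with the $r_j^{1/2}$ rescaling and the cone $\|\gamma\|_2\le s$ handling the quadratic aggregation. The only cosmetic differences are your explicit quantile-based justification for confining $\tau$ to $[-C,C]$ (the paper merely asserts such a bound in a footnote; note that with the regularizer present the optimal $\tau$ need not be an exact in-sample $\alpha$-quantile, though the coercivity argument still confines it to $[-K,K]$) and your remark on epigraph-linearizing the SAA term.
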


\noindent The proof, which follows standard big-$M$ logic linearization arguments for piecewise definitions of the gradient norm, is provided in Appendix~\ref{app:proof_prop_RefAdversoSAAMeanRisk}. The approximation arises from using a small tolerance $\delta>0$ to handle the strict inequalities that appear in the logical conditions of the gradient calculation, a standard technique in mixed-integer programming.

We next investigate the empirical behavior of both the quadratic-aggregation SBR-SAA model \eqref{eq:AdversoSAAMeanRisk_quad} and its linear-aggregation variant SBR-SAA-L \eqref{eq:AdversoSAAMeanRisk} in two case studies. In each case, we compare against SAA, 1-WDRO, and 2-WDRO, all calibrated on the same data.

\subsubsection{Case Study 1: Performance in the Post-Pandemic Era (2020--2022)}
\label{subsec:CaseStudy1}

\paragraph{Experimental Setup}

Our first numerical experiment is based on real historical data. The asset universe consists of 23 of the largest companies by market capitalization in the S\&P 500 index as of 2022, with a substantial exposure to the technology sector. The specific assets are listed in Table~\ref{tab:asset_list_2022_Port}. Hence, the portfolio dimension is $d=23$. The random vector $\xi$ represents the daily returns of these assets.

\begin{table}[t]
\centering
\caption{List of assets used in Case Study 1 (2022 portfolio).}
\label{tab:asset_list_2022_Port}
\begin{tabular}{lll}
\toprule
AAPL - Apple & INTC - Intel & PG - Procter \& Gamble \\
AMZN - Amazon & JNJ - Johnson \& Johnson & T - AT\&T \\
BAC - Bank of America & JPM - JPMorgan Chase & UNH - UnitedHealth Group \\
BRK-A - Berkshire Hathaway & KO - The Coca-Cola Co. & VZ - Verizon \\
CVX - Chevron & MA - Mastercard & WFC - Wells Fargo \\
DIS - The Walt Disney Co. & MRK - Merck \& Co. & WMT - Walmart \\
HD - The Home Depot & MSFT - Microsoft & XOM - Exxon Mobil \\
PFE - Pfizer & GOOG - Alphabet Inc. & \\
\bottomrule
\end{tabular}
\end{table}

We structure our data chronologically to simulate a realistic workflow. The market conditions of these three years were notably distinct, making this split particularly insightful:
\begin{itemize}
 	\item \textbf{Adverse Scenarios (Year 2020):} Data from this highly volatile year, marked by the onset of the COVID-19 pandemic, is used exclusively to identify the set of adverse scenarios $\{\zeta_j\}$.
 	\item \textbf{Training Data (Year 2021):} Daily returns from 2021, a year of strong market recovery, form the in-sample dataset $\hat{\Xi}_{n}$ used to train all models.
 	\item \textbf{Out-of-Sample Test Data (Year 2022):} Daily returns from 2022, a year marked by the war in Ukraine, serve as the testing ground to evaluate if robustness learned from the pandemic generalizes to a new, distinct period of market stress.
\end{itemize}

Adverse scenarios are identified from the daily returns of the S\&P 500 index in 2020. We select three negative-return thresholds and define as adverse every day on which the index return falls below the corresponding threshold. For each such day, the vector of returns of the 23 assets becomes an adverse scenario $\zeta_j$. As illustrated in Figure~\ref{fig:RetornosS&P500Ano2020_Port}, the thresholds and resulting numbers of scenarios are:
$-2\%$ (26 days, $m=26$),
$-3.5\%$ (11 days, $m=11$),
and $-5\%$ (5 days, $m=5$).

\begin{figure}[t]
\centering
\includegraphics[scale=0.6]{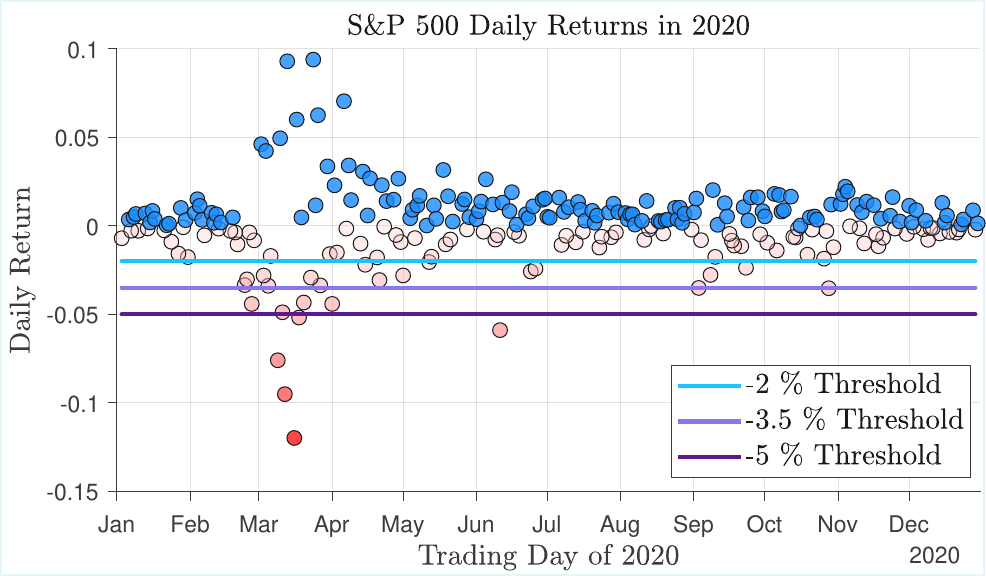}
\caption{Daily returns of the S\&P 500 index in 2020, with thresholds at $-2\%$, $-3.5\%$, and $-5\%$ used to identify adverse market days for Case Study 1.}
\label{fig:RetornosS&P500Ano2020_Port}
\end{figure}

For each threshold, we implement two variants of our scenario-based regularization framework:
(i) the original SBR-SAA model, which employs the quadratic aggregation of gradient norms as in \eqref{eq:AdversoSAAMeanRisk_quad},
and (ii) the linear-aggregation variant, denoted SBR-SAA-L, which replaces the quadratic aggregation with the sum-type regularizer \eqref{eq:AdversoSAAMeanRisk}.
In both cases, the same sets of adverse scenarios are used; only the way in which their associated sensitivities are aggregated differs.
These models are benchmarked against:
the standard SAA estimator,
and two Wasserstein DRO formulations (1-WDRO and 2-WDRO) based on ambiguity sets with Euclidean ground metric, both of which admit tractable SOCP reformulations (see Appendix~\ref{app:wdro_reforms}).

All robust methods are evaluated over a common logarithmic grid
\(
\mathcal{E} = \{ i \times 10^j : i \in \{1,\dots,9\},\ j \in \{-3,-2,-1,0\} \}
\)
of regularization parameters or Wasserstein radii, depending on the method.
Unless stated otherwise, we fix $\alpha = 5\%$ and $\rho = 10$ in the mean-CVaR objective \eqref{eq:F_portfolio} for all approaches.
This ensures that differences in performance are attributable to the robustness mechanism rather than to heterogeneous risk-aversion specifications.

\paragraph{Performance-robustness curves}

To compare the out-of-sample performance of the portfolios generated by each methodology $\mathsf{M}$, we construct performance-robustness curves. We utilize the test dataset comprised of the 2022 daily returns, denoted as $\hat{\Xi}_{out}=\{\hat{\xi}_{1}',\hat{\xi}_{2}',\ldots,\hat{\xi}_{k}'\}$, and let $\hat{\mathbb{P}}_{k}'$ be its corresponding empirical probability distribution. For each solution $(w,\tau)$ obtained from a given method for some $\varepsilon$, we generate a performance point. This point plots the out-of-sample expected cost on the y-axis against the "cost risk premium" (out-of-sample $\mathrm{CVaR}_{5\%}$ minus out-of-sample Mean) on the x-axis. This creates a set of performance points for each methodology, formally defined as:
\begin{equation}
\mathcal{C}_{\mathsf{M}}=\left\{ \left(
  	\begin{multlined}[c]
  	 	\mathrm{CVaR}_{5\%,\xi\sim\hat{\mathbb{P}}_{k}'}\left[F(w,\tau,\xi)\right] \\
  	 	- \mathbb{E}_{\xi\sim \hat{\mathbb{P}}_{k}'}\left[F(w,\tau,\xi)\right]
  	\end{multlined},
  	\mathbb{E}_{\xi\sim \hat{\mathbb{P}}_{k}'}\left[F(w,\tau,\xi)\right]
  	\right)
  	\: : \:
  	\begin{array}{@{}l@{}}
  	 	(w,\tau) \text{ is a solution from} \\
  	 	\text{method }\mathsf{M} \text{ for some } \varepsilon\in\mathcal{E}
  	\end{array}
\right\},
\label{eq:performance_set_1_Port}
\end{equation}
with $\mathsf{M}$ ranging over all implemented variants:
SAA, 1-WDRO, 2-WDRO, SBR-SAA (for each threshold), and SBR-SAA-L (for each threshold).

\paragraph{Results for the original SBR-SAA regularizer}

Figure~\ref{fig:ComparacionCurvasFronteraEffQuad_Port} reports the performance–robustness frontier for the original SBR-SAA model (quadratic aggregation), alongside SAA, 1-WDRO, and 2-WDRO. 
The frontier is the analogous construct to that introduced in Section~\ref{subsec:DesignAndSetupNewsvendorMulti} for the multi-product newsvendor experiment; here it is applied in the portfolio setting exactly as specified in the preceding paragraph. 
This presentation isolates the impact of the original regularizer.

\begin{figure}[t]
\centering
\includegraphics[scale=0.6]{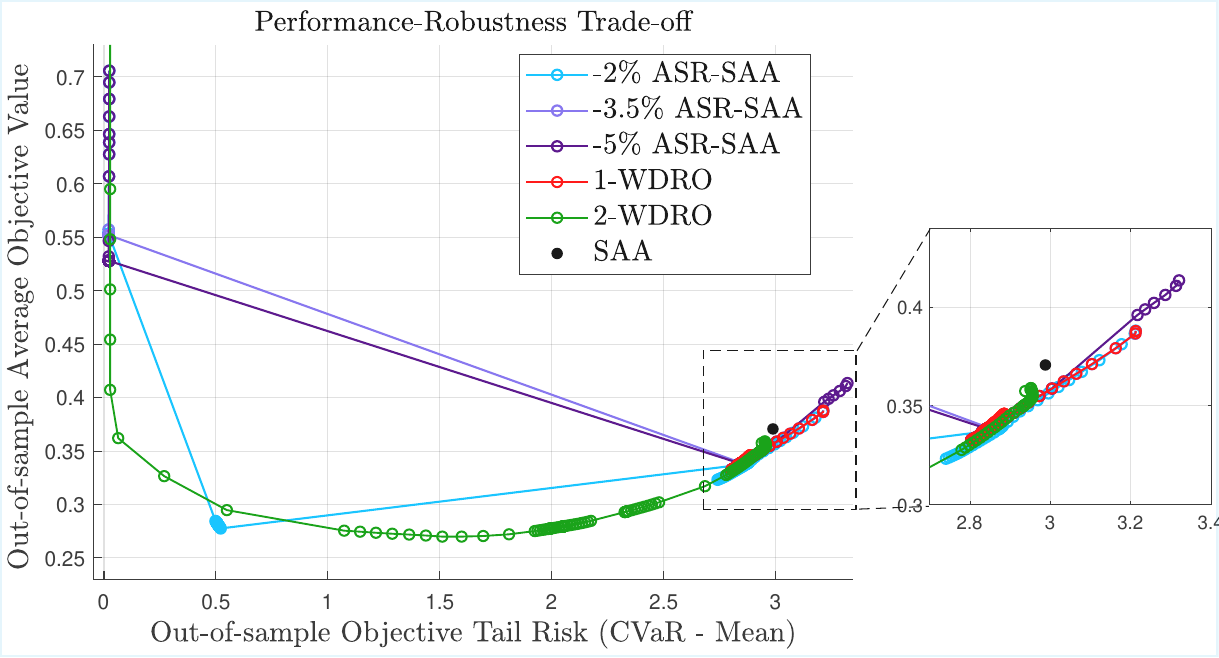}
\caption{Case Study 1 (2020--2022). Performance-robustness frontiers for the original SBR-SAA model with quadratic gradient-norm aggregation, compared against SAA, 1-WDRO, and 2-WDRO. The vertical axis reports the out-of-sample expected cost $\mathbb{E}[F]$, and the horizontal axis the out-of-sample cost tail risk premium $\mathrm{CVaR}_{5\%}[F]-\mathbb{E}[F]$.}
\label{fig:ComparacionCurvasFronteraEffQuad_Port}
\end{figure}

Three observations are worth highlighting.
First, all SBR-SAA frontiers (for the three thresholds) dominate the SAA point, confirming that even the purely theory-driven regularizer yields portfolios with both lower average cost and lower tail risk than the naive empirical approach.
Second, the SBR-SAA curves uniformly improve upon 1-WDRO, indicating that encoding targeted gradient-based robustness around adverse days is more effective here than relying on a Wasserstein ball of order $p=1$.
Third, the comparison with 2-WDRO is nuanced but encouraging:
for the most stringent threshold ($-2\%$), the best SBR-SAA configuration attains an out-of-sample expected cost comparable to the best 2-WDRO configuration, while achieving a strictly smaller cost tail risk premium; for the intermediate thresholds ($-3.5\%$ and $-5\%$), SBR-SAA remains competitive but does not uniformly dominate the 2-WDRO frontier.
These patterns show that, when adverse scenarios are informative, the original SBR-SAA regularizer can perform on par with a strong WDRO benchmark while offering a transparent link between robustness and specific stress episodes.

\paragraph{Results for the linear-aggregation SBR-SAA-L variant}

We now turn to the SBR-SAA-L variant, whose performance-robustness curves are displayed in Figure~\ref{fig:ComparacionCurvasFronteraEff_Port}.
Recall that SBR-SAA-L is a direct extension of our framework that replaces the quadratic aggregation of gradient norms by a linear aggregation; it preserves the same adverse scenarios and conceptual interpretation, while offering a slightly different emphasis on their joint effect.

\begin{figure}[t]
\centering
\begin{tabular}{c}
 \includegraphics[scale=0.6]{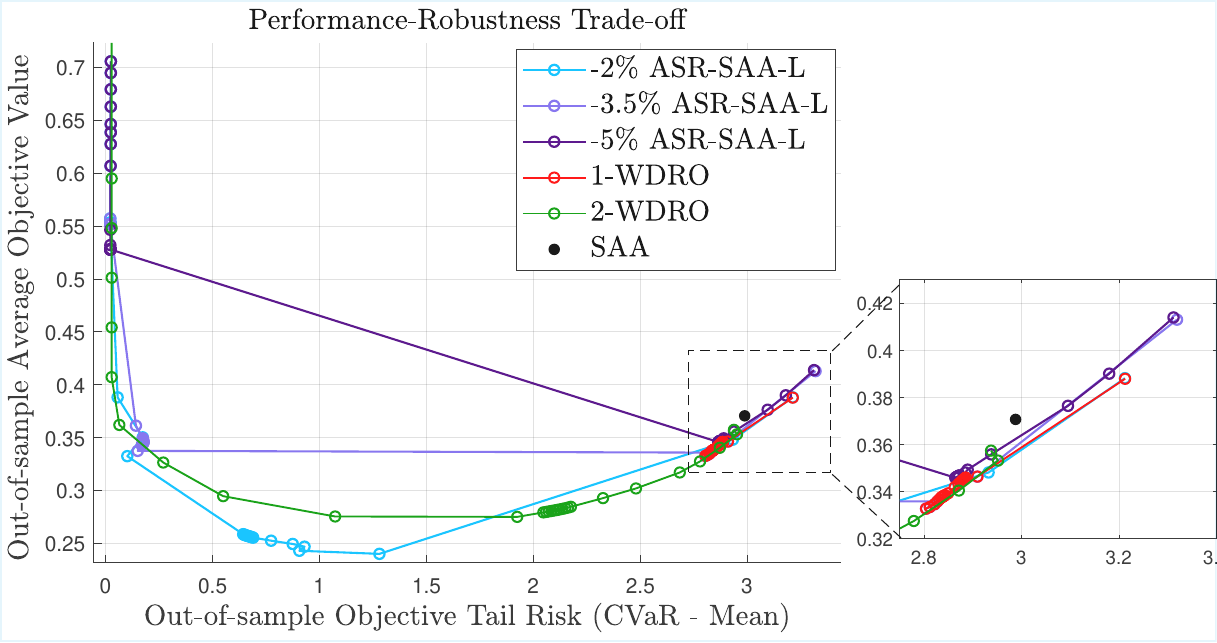}
\end{tabular}
\caption{Case Study 1 (2020--2022). Performance-robustness frontiers for the SBR-SAA-L variant (linear aggregation), compared against SAA, 1-WDRO, and 2-WDRO. Axes as in Figure~\ref{fig:ComparacionCurvasFronteraEffQuad_Port}.}
\label{fig:ComparacionCurvasFronteraEff_Port}
\end{figure}

Here the advantages of the scenario-based approach become more pronounced.
All SBR-SAA-L frontiers clearly dominate SAA and 1-WDRO.
More importantly, the variant based on the $-2\%$ threshold, which leverages the richest set of adverse scenarios while still focusing on genuinely stressed days, produces a frontier that extends furthest towards the lower-left region and is not dominated by 2-WDRO.
Across a wide range of $\varepsilon$, the $-2\%$ SBR-SAA-L solutions simultaneously achieve low expected costs and low tail risk premia, whereas 2-WDRO either matches one criterion at the expense of the other or lies strictly above/right.
The $-3.5\%$ and $-5\%$ SBR-SAA-L variants also yield robust frontiers, but, as expected given the smaller number of scenarios, their improvements are more localized.

Taken together, Case Study 1 conveys a coherent message.
The original SBR-SAA model, directly motivated by the worst-case expectation expansion, already behaves as a competitive robustification scheme.
The SBR-SAA-L extension, which remains fully consistent with our framework, accentuates the benefits when adverse scenarios are rich and relevant, and provides clear empirical evidence that targeted, scenario-based regularization can match or surpass state-of-the-art WDRO models in realistic stressed environments.

\subsubsection{Case Study 2: The 2008 Financial Crisis and Scenario Selection}
\label{subsec:CaseStudy2}

\paragraph{Experimental Setup}

The second case study examines a complementary question: how sensitive is our approach to the quality and representativeness of the adverse scenarios?
Here we deliberately construct a mismatch between the period used to define adverse scenarios and the period in which portfolios are tested, using the 2008 financial crisis as a stress benchmark.

The asset universe reflects the market composition at the start of 2008, with a stronger presence of financial and industrial firms.
The 23 assets are listed in Table~\ref{tab:asset_list_2008_Port}.

\begin{table}[t]
\centering
\caption{List of assets used in Case Study 2 (2008 portfolio).}
\label{tab:asset_list_2008_Port}
\begin{tabular}{lll}
\toprule
XOM – Exxon Mobil Corp. & JNJ – Johnson \& Johnson & VZ – Verizon Communications \\
GE – General Electric & JPM – JPMorgan Chase \& Co. & INTC – Intel Corporation \\
MSFT – Microsoft Corp. & BRK-A – Berkshire Hathaway & WFC – Wells Fargo \& Co. \\
C – Citigroup Inc. & HPQ – HP Inc. & KO – The Coca-Cola Co. \\
T – AT\&T Inc. & CVX – Chevron Corp. & COP – ConocoPhillips \\
BAC – Bank of America Corp. & CSCO – Cisco Systems, Inc. & MO – Altria Group, Inc. \\
PG – The Procter \& Gamble Co. & IBM – IBM Corp. & AIG – American Int'l Group \\
WMT – Walmart Inc. & PFE – Pfizer Inc. & \\
\bottomrule
\end{tabular}
\end{table}

The temporal segmentation is analogous in structure but very different in spirit:
\begin{itemize}
 	\item \textbf{Adverse Scenarios (Year 2006):} Data from this stable, pre-crisis year is used to identify scenarios. As shown in Figure \ref{fig:RetornosS&P500Ano2006_Port}, the daily losses in 2006 were mild, never exceeding -2\%.
 	\item \textbf{Training Data (Year 2007):} Daily returns from this final pre-crisis year are used for training.
 	\item \textbf{Out-of-Sample Test Data (Year 2008):} The performance is evaluated during the severe market crash of 2008.
\end{itemize}

\begin{figure}[t]
\centering
\includegraphics[scale=0.6]{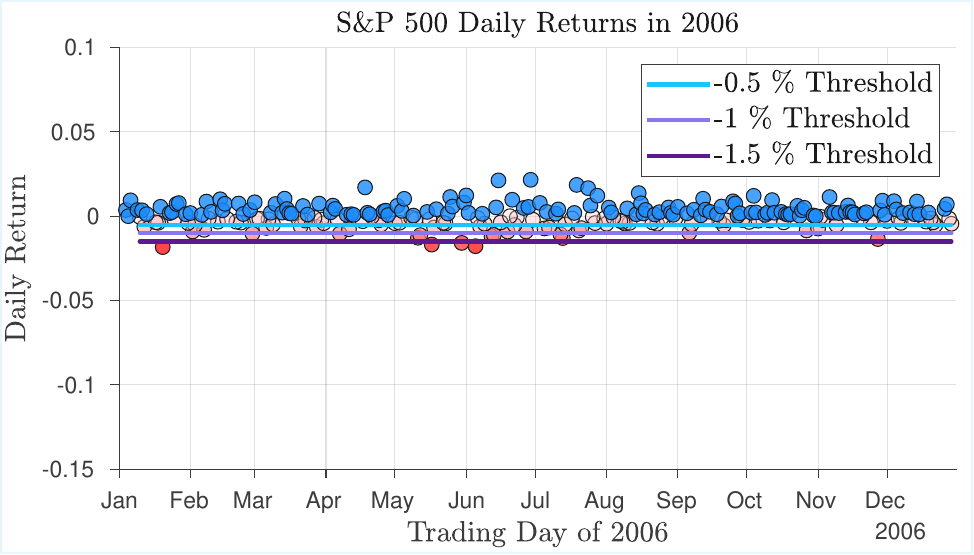}
\caption{Daily returns of the S\&P 500 index in 2006 for Case Study 2. The mild volatility highlights the limited severity of the adverse scenarios used to construct the regularizers.}
\label{fig:RetornosS&P500Ano2006_Port}
\end{figure}

Following the same procedure as in Case Study 1, scenarios are extracted from 2006 using several modest negative-return thresholds on the S\&P 500.
Because losses in 2006 are small, the resulting adverse scenarios are numerous but relatively mild.
These scenarios are then used to build both the quadratic-aggregation SBR-SAA regularizer and the SBR-SAA-L regularizer. Due to the low volatility of this period, we use much smaller thresholds: `-0.5\%` ($m=36$), `-1.0\%` ($m=13$), and `-1.5\%` ($m=4$).
All methods, SAA, 1-WDRO, 2-WDRO, SBR-SAA, and SBR-SAA-L, are calibrated on 2007 and evaluated on 2008, across the same grid $\mathcal{E}$ and with the same $(\alpha,\rho)$ as in the first case study.

\paragraph{Results and Discussion}

To streamline the exposition and avoid redundancy, we report a single representative set of performance–robustness frontiers on the 2008 test data. Specifically, Figure~\ref{fig:ComparacionCurvasFronteraEff2008_Port} displays the curves for the SBR-SAA-L variant alongside SAA, 1-WDRO, and 2-WDRO. The original SBR-SAA model with quadratic aggregation produces a frontier that is numerically and visually indistinguishable under this setup; we therefore omit its separate plot and summarize the comparison in the discussion below.

\begin{figure}[t]
\centering
\begin{tabular}{c}
 \includegraphics[scale=0.6]{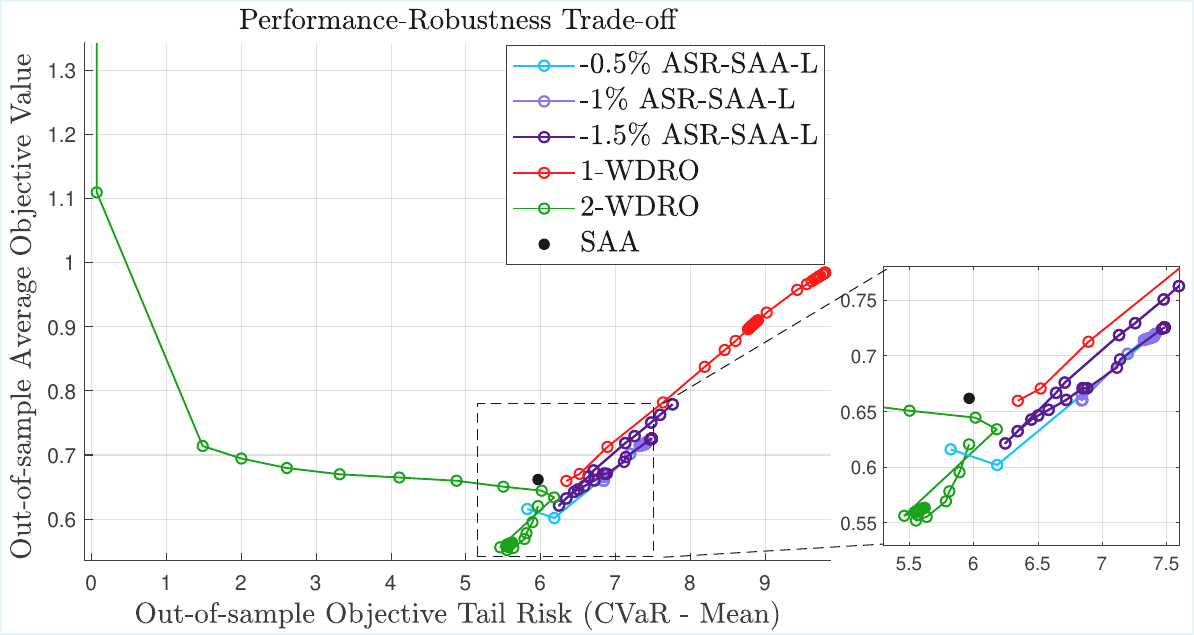}
\end{tabular}
\caption{Case Study 2 (2006--2008). Performance–robustness frontiers for the SBR-SAA-L variant, compared against SAA, 1-WDRO, and 2-WDRO.}
\label{fig:ComparacionCurvasFronteraEff2008_Port}
\end{figure}

The conclusions differ markedly from those of Case Study~1.
In Figure~\ref{fig:ComparacionCurvasFronteraEff2008_Port}, 2-WDRO emerges as the dominant methodology:
its efficient frontier lies consistently below and to the left of the alternatives, reflecting superior control of both expected cost and tail risk during the crisis.
By contrast, the frontiers corresponding to SBR-SAA-L, although clearly improving upon SAA and typically upon 1-WDRO, do not surpass 2-WDRO.
Importantly, we verified that the curve obtained with the original SBR-SAA (quadratic aggregation) is essentially the same as the one reported here for SBR-SAA-L, with discrepancies below plotting resolution. This near coincidence is consistent with the construction: both regularizers are driven by adverse scenarios that severely understate the magnitude and structure of the risk realized in 2008, and thus both encode a form of robustness that is, in hindsight, too mild.

This experiment plays a crucial conceptual role.
First, it confirms that our scenario-based regularization framework is not artificially tuned to always appear superior: when the adverse scenarios are poorly aligned with the realized stress, WDRO with an appropriately chosen Wasserstein radius can be more effective.
Second, it shows that even in such a misspecified setting, both SBR-SAA and SBR-SAA-L still deliver meaningful improvements relative to SAA, demonstrating that incorporating even limited stress information is rarely harmful.
Third, the practical indistinguishability between the quadratic and linear aggregations in this case highlights that the dominant driver of performance is the quality of the scenarios, not the precise choice of aggregation.

Taken together, the two case studies provide a balanced assessment.
When adverse scenarios are informative and reflect genuine stress, our SBR-SAA framework—particularly its SBR-SAA-L variant—yields frontiers that are competitive with or superior to WDRO.
When the scenarios are uninformative or misleading, SBR-SAA behaves as a reasonable robustification of SAA but does not outperform a well-calibrated WDRO.
This dependence on scenario quality is not a weakness but a transparent feature: it makes explicit which pieces of information drive robustness, and it clarifies under which conditions targeted, scenario-based regularization should be preferred to more agnostic distributionally robust approaches.

\section*{Acknowledgments}

M. Junca was supported by the Research Fund of Facultad de Ciencias, Universidad de Los Andes INV-2025-213-3470.



\bibliography{refs}


\appendix

\section{Detailed construction of the motivating example} \label{Appendix:MotivatingExample}

This appendix provides the explicit construction underlying Example~\ref{MotivationalExample}. The aim is to isolate, in a controlled one-dimensional setting, how different choices of the Wasserstein order $p$ in WDRO can lead to solutions with similar out-of-sample expected performance but systematically different tail behavior. The example is designed so that:
(i) the original stochastic program \eqref{eqn:SP_intro} has two optimal solutions located at the boundary of the feasible set, 
(ii) the robust optimization limit of WDRO selects a distinct interior solution, and 
(iii) 1-WDRO and 2-WDRO, for suitable radii, tend to concentrate around different true optima, which allows us to compare their induced tail risks.

We consider a scalar decision variable \(
x \in \mathcal{X} := [\beta,\beta+1]\), \(\beta>0
\), and a scalar random disturbance \(
\xi \sim \mathcal{N}(0,\sigma^{2})
\) with $\sigma>0$. For the numerical illustrations reported in Figures~\ref{fig:OutOfSampleParticularEjeploMotivac_intro} and \ref{fig:ComparacionEjemploMotivacional_intro}, we fix $\beta = 0.5$, $\sigma = \tfrac{1}{2}$, $R = 0.5$.

The loss function $F(x,\xi)$ is constructed as follows. Define
\[
\ell(x)
=
\big(\sqrt{\beta+1}+\sqrt{\beta}\big)^{2}
\big(x-\sqrt{\beta(\beta+1)}\big)^{2},
\]
and
\[
r(x)
=
\big(c x+\beta (1-c)\big)\,(x-\beta-1),
\]
where the constants $a,b,c$ are given by
\[
a
=
\frac{\big(A-B\big)\sqrt{\beta}\,(\sqrt{\beta+1}-\sqrt{\beta})-R}
{\sqrt{\beta(\beta+1)}\,(\sqrt{\beta+1}-\sqrt{\beta})^{2}},
\qquad
b
=
A-B-a(\beta+1),
\]
\[
c
=
-\frac{\sqrt{\beta}}{\sqrt{\beta+1}-\sqrt{\beta}}
\left(
\frac{R}{(\sqrt{\beta+1}-\sqrt{\beta})\,\beta\sqrt{\beta+1}}
+1
\right),
\]
with
\[
A = \beta\left(\sigma\sqrt{\tfrac{2}{\pi}} - 1\right),
\qquad
B = 2(\beta+1)\left(\sigma^{2}\varphi_{\sigma}(1)-\Phi_{\sigma}(-1)\right),
\]
and where $\varphi_{\sigma}$ and $\Phi_{\sigma}$ denote the probability density and distribution functions of $\mathcal{N}(0,\sigma^{2})$, respectively.

We then define
\begin{equation}\label{eq:FAppendixDefinition}
F(x,\xi)
=
\max\big\{-\ell(x)(\xi+1),\, r(x),\, \ell(x)(\xi-1)\big\}
+
\big(a x+b\big)(x-\beta),
\end{equation}
for $x\in[\beta,\beta+1]$ and $\xi\in\mathbb{R}$.

The coefficients are chosen so that the following properties hold (their verification is straightforward but tedious and therefore omitted). First, the stochastic program \eqref{eqn:SP_intro} associated with $F$ admits two minimizers at $x=\beta$ and $x=\beta+1$, and no other minimizers in $(\beta,\beta+1)$. Second, if the expectation in \eqref{eqn:SP_intro} is replaced by a suitable worst-case operator over a compact interval of $\xi$ (the robust optimization limit that WDRO approaches as the Wasserstein radius grows), the resulting robust problem has a unique solution $x_{\mathrm{RO}}$ strictly inside $(\beta,\beta+1)$. Thus, the boundary and robust optima are explicitly separated. Third, the piecewise affine structure in $\xi$, modulated by $\ell(x)$ and $r(x)$, generates regions where $F(x,\xi)$ is more or less sensitive to perturbations in $\xi$ depending on the choice of $x$, which is precisely what different Wasserstein orders $p$ exploit.

Figure~\ref{fig:FuncionEjemploMotivacional_intro} illustrates the evolution of $F(x,\xi)$ as a function of $\xi$ for representative values of $x$ between $\beta$ and $\beta+1$ using the parameter configuration above. As $x$ moves from one endpoint to the other, the active affine branches change and the magnitude of their slopes varies, creating locations where the loss is relatively flat in $\xi$ and others where it is steep. This controlled pattern of sensitivities is central to the behavior observed when applying 1-WDRO and 2-WDRO.

\begin{figure}[t]
\centering
\includegraphics[scale=0.6]{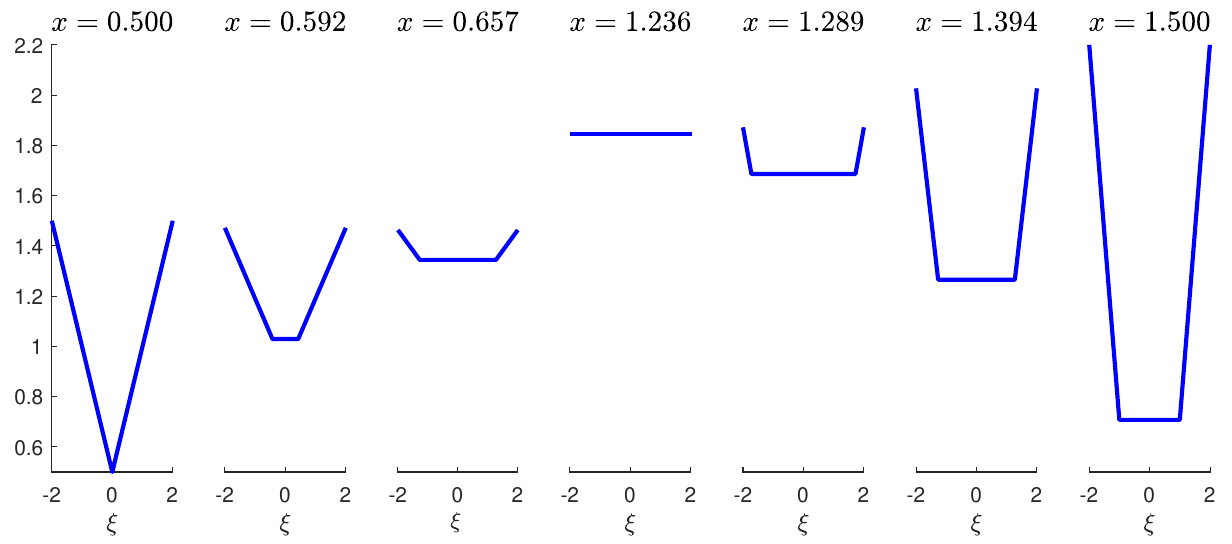}
\caption{Evolution of $F(x,\xi)$ as $x$ varies from $\beta$ to $\beta+1$ for $\beta=0.5$, $\sigma=\tfrac{1}{2}$, and $R=0.5$. The construction induces regions with different sensitivities of the loss to perturbations in $\xi$, which are exploited differently by Wasserstein DRO models with distinct orders $p$.}
\label{fig:FuncionEjemploMotivacional_intro}
\end{figure}

The numerical experiments discussed in the introduction are based on empirical WDRO problems of the form
\[
\min_{x\in[\beta,\beta+1]}
\sup_{\mathbb{Q} : W_p(\mathbb{Q},\widehat{\mathbb{P}}_{n}) \le \varepsilon}
\mathbb{E}_{\xi\sim\mathbb{Q}}[F(x,\xi)],
\]
with $p\in\{1,2\}$, the Euclidean ground metric, and empirical distribution $\widehat{\mathbb{P}}_{n}$ constructed from i.i.d. samples of $\xi$. For Figure~\ref{fig:OutOfSampleParticularEjeploMotivac_intro}, a single training sample is drawn, the 1-WDRO and 2-WDRO problems are solved for a common moderate radius $\varepsilon$, and the resulting decisions $\hat{x}_{1}$ and $\hat{x}_{2}$ are evaluated on a large independent test set (a cloud of out-of-sample evaluations and the corresponding boxplots). Both decisions yield similar out-of-sample means, but the distribution of $F(\hat{x}_{2},\xi)$ exhibits substantially larger upper extremes, revealing a higher exposure to rare but severe losses.

Figure~\ref{fig:ComparacionEjemploMotivacional_intro} reports a more systematic experiment: multiple independent training samples are generated, and the procedure above is repeated over a range of radii $\varepsilon$. For each configuration and each model ($p=1$ and $p=2$), we record the out-of-sample expected loss and the difference between out-of-sample $\mathrm{CVaR}_{\alpha}$ (for a fixed small $\alpha$) and the corresponding mean. The tubes between the 10\% and 90\% quantiles, together with their averages, show that 1-WDRO and 2-WDRO achieve comparable expected performance, while 2-WDRO persistently displays larger $\mathrm{CVaR}_{\alpha}$-to-mean gaps. This confirms that in this example the choice of $p$ affects the tail-risk profile in a systematic way, even when both models are tuned to similar levels of distributional robustness in expectation.

%

\section{Proofs of some  theoretical results}

\subsection{Proof of Theorem \ref{Thm:EquivalenceRegSAAvsWDRO} for a Deterministic Regularizer} \label{Sec:Appendix:ProofThmEquivalenceRegSAAvsWDRO}

Before proceeding with the proof of Theorem \ref{Thm:EquivalenceRegSAAvsWDRO} for the case when $R$ is deterministic, we need the following lemma which allows us to express the feasible set of problem (\ref{eqn:WDROFormSAARegGen_sec}) in terms of finite dimensional variables. 

\begin{lemma} \label{Lemma:RobustVersionOfMean}
Let $\varepsilon>0$, $\zeta$ be a random variable with unknown distribution $\mathbb{P}$ with support $\Xi\subseteq\mathbb{R}$, and we consider a sample $\widehat{\zeta}_{1},\ldots,\widehat{\zeta}_{n}$ of $\zeta$. Let $\widehat{\mathbb{P}}_{n}:=\frac{1}{n}\sum_{i=1}^{n}\delta_{\widehat{\zeta}_{i}}$ be the empirical distribution, and denote $\bar{\zeta}:=\mathbb{E}_{\zeta\sim\widehat{\mathbb{P}}_{n}}[\zeta]=\frac{1}{n}\sum_{i=1}^{n}\widehat{\zeta}_{i}$.  
\begin{enumerate}
    \item[(a)] If $p=1$ and $\Xi$ is an interval, then 
    $$  \sup_{ \mathbb{Q}\in \mathcal{B}_{\varepsilon}(\widehat{\mathbb{P}}_{n})}\mathbb{E}_{\mathbb{Q}}[\zeta]=\min\left\{\bar{\zeta}+\varepsilon,\sup_{\xi\in\Xi}\xi\right\} \quad \mbox{and}\quad   \inf_{ \mathbb{Q}\in \mathcal{B}_{\varepsilon}(\widehat{\mathbb{P}}_{n})}\mathbb{E}_{\mathbb{Q}}[\zeta]=\max\left\{\bar{\zeta}-\varepsilon,\inf_{\xi\in\Xi}\xi\right\}.
$$
    \item[(b)] If $p\geq 1$, $\Xi$ is an interval, and $\sup_{\zeta\in\Xi}\zeta=\infty$, then
    $$\sup_{ \mathbb{Q}\in \mathcal{B}_{\varepsilon}(\widehat{\mathbb{P}}_{n})}\mathbb{E}_{\mathbb{Q}}[\zeta] =\bar{\zeta}+\varepsilon
    \quad \mbox{and}\quad  \inf_{ \mathbb{Q}\in \mathcal{B}_{\varepsilon}(\widehat{\mathbb{P}}_{n})}\mathbb{E}_{\mathbb{Q}}[\zeta]= \bar{\zeta}-\varepsilon.
    $$
\end{enumerate}
\end{lemma}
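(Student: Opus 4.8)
The statement decomposes into a universal upper bound and a matching achievability (lower) bound, after which the infimum identities follow by a reflection.

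\emph{Upper bound (both parts).} Fix any $\mathbb{Q}$ in the ball. Applying Jensen's inequality to an optimal $W_p$-plan (the map $t\mapsto t^p$ is convex for $p\ge 1$) gives $W_1(\mathbb{Q},\widehat{\mathbb{P}}_n)\le W_p(\mathbb{Q},\widehat{\mathbb{P}}_n)\le\varepsilon$, so it suffices to argue with the $W_1$-distance. Finiteness of $W_1(\mathbb{Q},\widehat{\mathbb{P}}_n)$ together with the finite support of $\widehat{\mathbb{P}}_n$ forces $\mathbb{Q}$ to have a finite first moment, and for an optimal coupling $\Pi$ of $\mathbb{Q}$ and $\widehat{\mathbb{P}}_n$ the matching of marginals yields $\mathbb{E}_{\mathbb{Q}}[\zeta]-\bar\zeta=\int(\varsigma_1-\varsigma_2)\,d\Pi\le\int|\varsigma_1-\varsigma_2|\,d\Pi=W_1(\mathbb{Q},\widehat{\mathbb{P}}_n)\le\varepsilon$, hence $\mathbb{E}_{\mathbb{Q}}[\zeta]\le\bar\zeta+\varepsilon$. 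Since $\mathbb{Q}$ is supported on $\Xi$ we also have $\mathbb{E}_{\mathbb{Q}}[\zeta]\le\sup_{\xi\in\Xi}\xi$. Taking the supremum over $\mathbb{Q}$ gives $\le\min\{\bar\zeta+\varepsilon,\ \sup_{\xi\in\Xi}\xi\}$, which is the claimed value in (a) and collapses to $\bar\zeta+\varepsilon$ under the hypothesis $\sup_{\xi\in\Xi}\xi=\infty$ of (b).

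\emph{Achievability.} For (b) I will exhibit an explicit maximizer by shifting the whole empirical measure rigidly to the right: set $\mathbb{Q}^\star:=\frac1n\sum_{i=1}^n\delta_{\widehat\zeta_i+\varepsilon}$. Since $\Xi$ is an interval containing every $\widehat\zeta_i$ with $\sup_{\xi\in\Xi}\xi=\infty$, each $\widehat\zeta_i+\varepsilon\in\Xi$; the coupling pairing $\widehat\zeta_i$ with $\widehat\zeta_i+\varepsilon$ has cost $\frac1n\sum\varepsilon^p=\varepsilon^p$, so $W_p(\mathbb{Q}^\star,\widehat{\mathbb{P}}_n)\le\varepsilon$, while $\mathbb{E}_{\mathbb{Q}^\star}[\zeta]=\bar\zeta+\varepsilon$ meets the upper bound, proving (b) (and also the $\sup_{\xi\in\Xi}\xi=\infty$ sub-case of (a), since $p=1$ is included). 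For (a) with $M:=\sup_{\xi\in\Xi}\xi<\infty$: if $\bar\zeta+\varepsilon\le M$, take $y_i:=\widehat\zeta_i+\frac{\varepsilon}{M-\bar\zeta}(M-\widehat\zeta_i)\in[\widehat\zeta_i,M]\subseteq\Xi$ (replacing $\varepsilon$ by $\varepsilon'\uparrow\varepsilon$ if $M\notin\Xi$ and $\bar\zeta+\varepsilon=M$), so that $\mathbb{Q}:=\frac1n\sum\delta_{y_i}$ has transport cost $\frac1n\sum(y_i-\widehat\zeta_i)=\varepsilon$ and mean $\bar\zeta+\varepsilon$; if instead $\bar\zeta+\varepsilon>M$, then $W_1(\delta_M,\widehat{\mathbb{P}}_n)=M-\bar\zeta<\varepsilon$, so the Dirac at $M$ (or at $M-\eta$ with $\eta\downarrow0$ when $M\notin\Xi$) lies in the ball and attains mean $M$. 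In every case the supremum equals $\min\{\bar\zeta+\varepsilon,M\}$.

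\emph{Infimum identities and main obstacle.} The two infimum formulas follow by applying the supremum results to $-\zeta$, whose distribution is supported on the reflected interval $-\Xi$ with empirical mean $-\bar\zeta$; reflection is a $W_p$-isometry, so $\inf_{\mathbb{Q}}\mathbb{E}_{\mathbb{Q}}[\zeta]=-\sup_{\mathbb{Q}'}\mathbb{E}_{\mathbb{Q}'}[-\zeta]$, and $\sup(-\Xi)=-\inf\Xi$ gives the stated expressions. I expect the achievability direction to be the only delicate part: one must verify that the transport budget can genuinely displace mass to the right far enough to lift the mean by $\varepsilon$ without exiting $\Xi$, which is exactly where the interval hypothesis (and, in (b), unboundedness) is used, and the bookkeeping splits according to whether $\sup_{\xi\in\Xi}\xi$ is infinite, finite and attained, or finite and unattained. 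A conceptually important point embedded in the $W_1\le W_p$ comparison is that for $p>1$ moving a mass $m\le1$ a distance $\delta$ costs $m^{1/p}\delta\ge m\delta$ in $W_p$ while raising the mean by only $m\delta$, so no strategy improves on the rigid shift of all the mass — this is why the answer $\bar\zeta+\varepsilon$ is independent of $p$.
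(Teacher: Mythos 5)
Your proof is correct, but it follows a genuinely different route from the paper's. The paper proves this lemma through the dual reformulation of Theorem~\ref{Thm:ReformulacionDROWInterno}: it writes the worst-case expectation as $\inf_{\lambda\ge 0}\bigl(\lambda\varepsilon^{p}+\tfrac1n\sum_i s_i\bigr)$ with $s_i=\sup_{\zeta\in\Xi}(\zeta-\lambda|\zeta-\widehat\zeta_i|^p)$, computes these inner suprema explicitly in each regime (invoking a minimax exchange for the bounded case $B<\infty$, and the closed-form value $\widehat\zeta_i+\tfrac{p-1}{\lambda^{1/(p-1)}p^{p/(p-1)}}$ for $p>1$), and then optimizes over $\lambda$. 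You instead argue entirely on the primal side: the universal upper bound $\mathbb{E}_{\mathbb{Q}}[\zeta]-\bar\zeta\le W_1\le W_p\le\varepsilon$ combined with the support constraint, matched by explicit transport plans (a rigid shift for the unbounded case, an interpolation toward $M$ or a Dirac at $M$ for the bounded case, with limiting arguments at unattained endpoints), and a reflection for the infimum identities. Your approach is more elementary and self-contained --- it does not lean on the strong-duality machinery and it exhibits the (near-)worst-case distributions explicitly, which also makes transparent why the answer is independent of $p$; the paper's approach is more mechanical and reuses the reformulation infrastructure that drives the rest of the paper. The only points worth making explicit in your write-up are that the achievability constructions implicitly use $\widehat\zeta_i\in\Xi$ (so that $[\widehat\zeta_i,M]\subseteq\Xi$ and $\widehat\zeta_i+\varepsilon\in\Xi$), and that the mean-difference bound requires $\mathbb{Q}$ to have a finite first moment, which you correctly note follows from membership in the ball; neither is a gap.
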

\proof
It suffices to demonstrate the first equality for each case, as the second is readily obtained by setting $\inf_{ \mathbb{Q}\in \mathcal{B}_{\varepsilon}(\widehat{\mathbb{P}}_{n})}\mathbb{E}_{\mathbb{Q}}[\zeta]=-\sup_{ \mathbb{Q}\in \mathcal{B}_{\varepsilon}(\widehat{\mathbb{P}}_{n})}\mathbb{E}_{\mathbb{Q}}[-\zeta]$. For $p$-Wasserstein distances, by Theorem  \ref{Thm:ReformulacionDROWInterno} we have that
\begin{equation}  \label{ReformulationInf}
 \sup_{ \mathbb{Q}\in \mathcal{B}_{\varepsilon}(\widehat{\mathbb{P}}_{n})}\mathbb{E}_{\mathbb{Q}}[\zeta]=\left\{\begin{array}{lll}
{\displaystyle \inf_{\lambda\geq 0 } } & \lambda\varepsilon^{p}+\frac{1}{n}\sum_{i=1}^{n}s_{i} &\\
\mbox{subject to}  & \sup_{\zeta\in \Xi}\left(\zeta -\lambda\left|\zeta-\widehat{\zeta}_{i} \right|^{p} \right)\leq s_{i} & \forall\:i=1,\ldots,n, 
\end{array}\right.
\end{equation}
In case (a), considering $B:=\sup_{\xi\in\Xi}\xi$ and $A:=\inf_{\xi\in\Xi}\xi$, it is observed that if $B=\infty$, then
$$ \sup_{\zeta\in \Xi}\left(\zeta -\lambda\left|\zeta-\widehat{\zeta}_{i} \right| \right)=\sup_{\zeta\in (A,\infty)}\left(\zeta -\lambda\left|\zeta-\widehat{\zeta}_{i} \right| \right)=\left\{\begin{array}{ll}
    \widehat{\zeta}_{i} & \mbox{ if }\lambda\geq 1,  \\
     \infty &  \mbox{ if }\lambda<1.
\end{array}\right.$$
Therefore, 
$$\sup_{ \mathbb{Q}\in \mathcal{B}_{\varepsilon}(\widehat{\mathbb{P}}_{n})}\mathbb{E}_{\mathbb{Q}}[\zeta]=\bar{\zeta}+\varepsilon=\min\{\bar{\zeta}+\varepsilon,B\} $$
In addition, if $B<\infty$, then
\begin{align}
\sup_{\zeta\in \Xi}\left(\zeta -\lambda\left|\zeta-\widehat{\zeta}_{i} \right| \right)=& \sup_{\zeta\in (A,B)}\left(\zeta -\max_{|z_{i}|\leq \lambda} z_{i}\left(\zeta-\widehat{\zeta}_{i} \right) \right) \nonumber\\
=& \min_{|z_{i}|\leq \lambda}  \sup_{\zeta\in (A,B)}\left(\zeta - z_{i}\left(\zeta-\widehat{\zeta}_{i} \right) \right) \label{Paso:MinMax}\\
=& \min_{|z_{i}|\leq \lambda} \max\left\{(\widehat{\zeta}_{i}-A)z_{i}+A,(\widehat{\zeta}_{i}-B)z_{i}+B\right\} \nonumber\\
=& \left\{\begin{array}{ll}\widehat{\zeta}_{i} & \mbox{ if }\lambda\geq 1, \\ (\widehat{\zeta}_{i}-B)\lambda +B & \mbox{ if }\lambda <1. \end{array}\right. \nonumber
\end{align}
Equality  (\ref{Paso:MinMax})  is guaranteed by  Von Neumann's minimax theorem (see \citep{Bertsekas}). Therefore, we obtain
\begin{align}
\sup_{ \mathbb{Q}\in \mathcal{B}_{\varepsilon}(\widehat{\mathbb{P}}_{n})}\mathbb{E}_{\mathbb{Q}}[\zeta] =& \min\left\{\inf_{\lambda\geq 1}\left(\lambda\varepsilon+\frac{1}{n}\sum_{i=1}^{n}\widehat{\zeta}_{i}\right),\inf_{\lambda< 1}\left(\lambda\varepsilon+\frac{1}{n}\sum_{i=1}^{n}((\widehat{\zeta}_{i}-B)\lambda+B)\right)\right\} \nonumber\\
=& \min\left\{\bar{\zeta}+\varepsilon,\min\{\bar{\zeta}+\varepsilon,B\}\right\} .\nonumber\\
=& \min\left\{\bar{\zeta}+\varepsilon,B\right\} .\nonumber
\end{align}
In case (b), for $p> 1$ we have that
\begin{equation*}
    \sup_{\zeta\in \Xi}\left(\zeta -\lambda\left|\zeta-\widehat{\zeta}_{i} \right|^{p} \right)=\widehat{\zeta}_{i}+\frac{p-1}{\lambda^{1/(p-1)}p^{p/(p-1)}}.
\end{equation*}
Therefore, 
\begin{equation*}
    \sup_{ \mathbb{Q}\in \mathcal{B}_{\varepsilon}(\widehat{\mathbb{P}}_{n})}\mathbb{E}_{\mathbb{Q}}[\zeta] =  \inf_{\lambda\geq 0}\left(\lambda\varepsilon^p+\frac{p-1}{\lambda^{1/(p-1)}p^{p/(p-1)}}+\bar{\zeta}\right)=\bar{\zeta}+\varepsilon.
\end{equation*}
\endproof

\proof[Proof of Theorem \ref{Thm:EquivalenceRegSAAvsWDRO} for a Deterministic Regularizer]
The presented outcome follows straightforwardly from Lemma \ref{Lemma:RobustVersionOfMean}. Specifically, Lemma \ref{Lemma:RobustVersionOfMean}-(a) implies that for $p=1$ and $F(x,\Xi)$ as an interval,  we have the following equivalence
\begin{align*}
    \widehat{J}_{R,n}^{\mathrm{DD}}(\varepsilon) &=\underset{ x\in\mathbb{X}  }{\min} \min\left\{\frac{1}{n}{\displaystyle\sum_{i=1}^{n}}F\left(x,\widehat{\xi}_{i}\right)+\varepsilon C, \sup_{\xi\in\Xi}F(x,\xi)\right\},
\end{align*}
As $\sup_{\xi\in\Xi}F(x,\xi)=\infty$, then (\ref{eqn:WDROFormSAARegGen_presec}) hold. The proof  for the case $p\geq 1$, $F(x,\Xi)$ as an interval, and $\sup_{\xi\in\Xi}F(x,\xi)=\infty$ for each $x\in\mathcal{X}$, can be established in a similar way using Lemma \ref{Lemma:RobustVersionOfMean}-(b).
\endproof


\subsection{Proof of Lemma \ref{lem:WassersteinPropagationBound_user}.}\label{Apendice:Pruebalem:WassersteinPropagationBound_user}

\proof[Proof of Lemma \ref{lem:WassersteinPropagationBound_user}] Let  $\widetilde{\xi}_{1},\ldots,\widetilde{\xi}_{m}$ be another sample of $\xi$, then let   $\widetilde{\mathbb{P}}_{m}$ be the empirical distribution generated by this sample. This last sample of $\xi$ induces the sample $\widetilde{\zeta}^{x,F}_{1},\ldots,\widetilde{\zeta}^{x,F}_{m}$ of $\zeta^{x,F}$ given by  $\widetilde{\zeta}^{x,F}_{i}:=F\left( x, \widetilde{\xi}_{i}\right)$, so we consider $F(x,\cdot)_{\#}\widetilde{\mathbb{P}}_{m}$, the push-forward of $\widetilde{\mathbb{P}}_{m}$ via $F(x,\cdot)$, which is also the empirical distribution generated by the sample  $\left\{\widetilde{\zeta}^{x,F}_{i}\right\}_{i=1}^{m}$. Because $\widetilde{\mathbb{P}}_{m} \rightarrow \mathbb{P}$ and $ F(x,\cdot)_{\#}\widetilde{\mathbb{P}}_{m} \rightarrow F(x,\cdot)_{\#}\mathbb{P}$ weakly as $m$ goes to $\infty$, by Corollary 6.11 en \citep{Villani2008} we have that
\begin{equation} \label{Eq:ConvergenciaEnWassesrtenEmiric}
W_{p}\left( \widehat{\mathbb{P}}_{n} ,\widetilde{\mathbb{P}}_{m}\right)\overset{{\scriptstyle m\rightarrow \infty}}{\longrightarrow}W_{p}\left( \widehat{\mathbb{P}}_{n} ,\mathbb{P}\right) \quad \mbox{ and }\quad W_{p}\left( F(x,\cdot)_{\#}\widehat{\mathbb{P}}_{n} ,F(x,\cdot)_{\#}\widetilde{\mathbb{P}}_{m}\right)\overset{{\scriptstyle m\rightarrow \infty}}{\longrightarrow}W_{p}\left( F(x,\cdot)_{\#}\widehat{\mathbb{P}}_{n} ,F(x,\cdot)_{\#}\mathbb{P}\right). 
\end{equation}
Additionally, for each $m$ we get that
\begin{align*}
W_{p}^{p}\left( F(x,\cdot)_{\#}\widehat{\mathbb{P}}_{n} ,F(x,\cdot)_{\#}\widetilde{\mathbb{P}}_{m}\right) &= \inf\left\{\sum_{i=1}^{n}\sum_{j=1}^{m} \lambda_{i,j}\left|\widehat{\zeta}^{x,F}_{i}-\widetilde{\zeta}^{x,F}_{j}  \right|^{p} \:\left|\: \begin{array}{l} \sum_{i=1}^{n}\lambda_{i,j}=\frac{1}{m},\\ \sum_{j=1}^{m}\lambda_{i,j}=\frac{1}{n},\\ \lambda_{i,j}\geq 0, \\ i=1,\ldots,n,\\ j=1,\ldots,m \end{array}  \right.\right\} \\
&= \inf\left\{\sum_{i=1}^{n}\sum_{j=1}^{m} \lambda_{i,j}\left|F\left( x, \widehat{\xi}_{i}\right)-F\left( x, \widetilde{\xi}_{j}\right)   \right|^{p} \:\left|\: \begin{array}{l} \sum_{i=1}^{n}\lambda_{i,j}=\frac{1}{m},\\ \sum_{j=1}^{m}\lambda_{i,j}=\frac{1}{n},\\ \lambda_{i,j}\geq 0, \\ i=1,\ldots,n,\\ j=1,\ldots,m \end{array}  \right.\right\}  \\
&\leq  \inf\left\{\sum_{i=1}^{n}\sum_{j=1}^{m} \lambda_{i,j} \gamma_{x,F}^{p}\left\| \widehat{\xi}_{i}- \widetilde{\xi}_{j}  \right\|^{p} \:\left| \: \begin{array}{l} \sum_{i=1}^{n}\lambda_{i,j}=\frac{1}{m},\\ \sum_{j=1}^{m}\lambda_{i,j}=\frac{1}{n},\\ \lambda_{i,j}\geq 0, \\ i=1,\ldots,n,\\ j=1,\ldots,m \end{array}  \right.\right\} \:\: \begin{array}{l}\mbox{by Assumption \ref{AssumptionPrincipal_sec}}\\ \mbox{}\end{array}\\
&= \gamma_{x,F}^{p}W_{p}^{p}\left( \widehat{\mathbb{P}}_{n} ,\widetilde{\mathbb{P}}_{m}\right).
\end{align*} 
Therefore, by (\ref{Eq:ConvergenciaEnWassesrtenEmiric}) we conclude 
$$W_{p}^{p}\left( F(x,\cdot)_{\#}\widehat{\mathbb{P}}_{n} ,F(x,\cdot)_{\#}\mathbb{P}\right)\leq \gamma_{x,F}^{p} W_{p}^{p}\left( \widehat{\mathbb{P}}_{n} ,\mathbb{P}\right).$$
\endproof


\section{Proof of Theorem \ref{thm:AsymptoticConsistency_sec}} \label{sec:ProofAsymptoticConsistency_sec}

\proof[Proof of Theorem \ref{thm:AsymptoticConsistency_sec}]
As $\hat{x}_{m,n}^{\mathrm{DD}} \in \mathcal{X}$, we have $J^* \le \mathbb{E}_{\xi\sim\mathbb{P}}[F(\hat{x}_{m,n}^{\mathrm{DD}},\xi)]$. Moreover, Theorem \ref{Thm:IneuqlityProbaOptimalValueAdv} implies that
$$
\mathbb{P}^{n}\left( J^{*} \leq \mathbb{E}_{\xi\sim\mathbb{P}}\left[F\left(\hat{x}_{m,n}^{\mathrm{DD}},\xi\right)\right]\leq \widehat{J}_{m,n}^{\mathrm{DD}} +\varepsilon_n(\beta_n) \alpha \right)>1- \beta_{n}.
$$
for all $n \in \mathbb{N}$. As $\sum_{n=1}^\infty \beta_n < \infty$, the Borel--Cantelli Lemma further implies that
$$
\mathbb{P}^{\infty}\left( J^{*} \leq \mathbb{E}_{\xi\sim\mathbb{P}}\left[F\left(\hat{x}_{m,n}^{\mathrm{DD}},\xi\right)\right]\leq \widehat{J}_{m,n}^{\mathrm{DD}} +\varepsilon_n(\beta_n) \alpha \text{ for all sufficiently large } n\right)=1.
$$
Since we have $\lim_{n \to \infty}\varepsilon_n(\beta_n)=0$, then this last equality implies
$$
\mathbb{P}^{\infty}\left( J^{*} \leq \liminf_{n\rightarrow\infty} \widehat{J}_{m,n}^{\mathrm{DD}}  \right)=1.
$$

To prove assertion (i), it thus remains to be shown that $\limsup_{n \to \infty} \widehat{J}_{m,n}^{\mathrm{DD}} \leq  J^*$ with probability 1. We choose any $\delta > 0$, fix a $\delta$-optimal decision $x_\delta \in \mathcal{X}$ for (\ref{eqn:SP_intro}) with $\mathbb{E}_{\xi\sim\mathbb{P}}[F(x_\delta,\xi)] \le J^* + \delta$, and for every $n \in \mathbb{N}$ let $\hat{\mathbb{Q}}_n^{x_{\delta}} \in \mathcal{B}_{\varepsilon\cdot (\widehat{R}_{m}(x_{\delta})+\alpha)}\left(F\left(x_{\delta},\cdot\right)_{\#}\widehat{\mathbb{P}}_{n}\right) $ be a $\delta$-optimal distribution corresponding to $x_\delta$ with
\[
\sup_{\mathbb{Q}\in\mathcal{B}_{\varepsilon\cdot (\widehat{R}_{m}(x_{\delta})+\alpha)}\left(F\left(x_{\delta},\cdot\right)_{\#}\widehat{\mathbb{P}}_{n}\right) } \mathbb{E}_{\varsigma\sim\mathbb{Q}}[\varsigma] \le \mathbb{E}_{\varsigma\sim\hat{\mathbb{Q}}_n^{x_{\delta}} }[\varsigma] + \delta.
\]

Then, we have
\begin{align}
\limsup_{n \to \infty} \widehat{J}_{m,n}^{\mathrm{DD}} & \le \limsup_{n \to \infty} \sup_{\mathbb{Q}\in\mathcal{B}_{\varepsilon\cdot (\widehat{R}_{m}(x_{\delta})+\alpha)}\left(F\left(x_{\delta},\cdot\right)_{\#}\widehat{\mathbb{P}}_{n}\right) } \mathbb{E}_{\varsigma\sim\mathbb{Q}}[\varsigma]  \nonumber\\
& \le \limsup_{n \to \infty} \mathbb{E}_{\varsigma\sim\hat{\mathbb{Q}}_n^{x_{\delta}} }[\varsigma] + \delta \nonumber\\
&= \limsup_{n \to \infty} \left( \mathbb{E}_{\varsigma\sim F(x_{\delta},\cdot)_{\#}\mathbb{P} }[\varsigma] + \mathbb{E}_{\varsigma\sim\hat{\mathbb{Q}}_n^{x_{\delta}} }[\varsigma] - \mathbb{E}_{\varsigma\sim F(x_{\delta},\cdot)_{\#}\mathbb{P} }[\varsigma] \right) + \delta \nonumber\\
& \le \limsup_{n \to \infty} \left( \mathbb{E}_{\varsigma\sim F(x_{\delta},\cdot)_{\#}\mathbb{P} }[\varsigma] + W_{1}( F(x_{\delta},\cdot)_{\#}\mathbb{P},\hat{\mathbb{Q}}_n^{x_{\delta}})\right) + \delta  \label{ineq:UsoOfThm:KantorovichRubinsteinDD} \\
&= \mathbb{E}_{\varsigma\sim F(x_{\delta},\cdot)_{\#}\mathbb{P} }[\varsigma] + \delta \quad\:\: \mathbb{P}^\infty-\mbox{almost surely}   \label{Ineq:UsoOfLimitDecisionsDist}\\
&= \mathbb{E}_{\xi\sim\mathbb{P}}[F(x_\delta,\xi)] + \delta   \quad\:\: \mathbb{P}^\infty-\mbox{almost surely}  \nonumber\\
&\le J^{*}+2\delta   \quad\:\: \mathbb{P}^\infty-\mbox{almost surely} \nonumber
\end{align}
Inequality (\ref{ineq:UsoOfThm:KantorovichRubinsteinDD}) is due to Theorem \ref{Thm:KantorovichRubinsteinDD_sec}, and equality (\ref{Ineq:UsoOfLimitDecisionsDist}) is due to Lemma \ref{lem:ConvergenceOfdistributions_sec}. Therefore, since $\delta > 0$ was chosen arbitrarily, we thus conclude that 
$$
\mathbb{P}^{\infty}\left(   \limsup_{n\rightarrow\infty} \widehat{J}_{m,n}^{\mathrm{DD}}  \leq  J^{*}\right)=1.
$$

To prove assertion (ii), fix an arbitrary realization of the stochastic process $\{\hat{\xi}_n\}_{n \in \mathbb{N}}$ such that $J^* = \lim_{n \to \infty} \widehat{J}_{m,n}^{\mathrm{DD}}$ and $J^* \le \mathbb{E}_{\xi\sim\mathbb{P}}[F(\hat{x}_{m,n}^{\mathrm{DD}},\xi)] \le \widehat{J}_{m,n}^{\mathrm{DD}}$ for all sufficiently large $n$. From the proof of assertion (i) we know that these two conditions are satisfied $\mathbb{P}^\infty$-almost surely. Using these assumptions, one easily verifies that
\begin{equation} \label{ineq:LimitsBetwwenTrueAndDD}
\liminf_{n \to \infty}\mathbb{E}_{\xi\sim\mathbb{P}}[F(\hat{x}_{m,n}^{\mathrm{DD}},\xi)] \le \lim_{n \to \infty} \widehat{J}_{m,n}^{\mathrm{DD}} = J^*.
\end{equation}

Next, let $x^*$ be an accumulation point of the sequence $\{\hat{x}_{m,n}^{\mathrm{DD}}\}_{n \in \mathbb{N}}$, and note that $x^* \in \mathcal{X}$ as $\mathcal{X}$ is closed. By passing to a subsequence, if necessary, we may assume without loss of generality that $x^* = \lim_{n \to \infty}\hat{x}_{m,n}^{\mathrm{DD}}$. Thus,
\[
J^* \le \mathbb{E}_{\xi\sim\mathbb{P}}[F(x^*,\xi)] \le \mathbb{E}_{\xi\sim\mathbb{P}}\left[\liminf_{n \to \infty} F(\hat{x}_{m,n}^{\mathrm{DD}},\xi)\right] \le \liminf_{n \to \infty}\mathbb{E}_{\xi\sim\mathbb{P}}[F(\hat{x}_{m,n}^{\mathrm{DD}},\xi)] \le J^*,
\]
where the first inequality exploits that $x^* \in \mathcal{X}$, the second inequality follows from the lower semicontinuity of $F(x,\xi)$ in $x$, the third inequality holds due to Fatou’s lemma (which applies because $F(x,\xi)$ grows at most linearly in $\xi$), and the last inequality follows from (\ref{ineq:LimitsBetwwenTrueAndDD}). Therefore, we have $\mathbb{E}_{\xi\sim\mathbb{P}}[F(x^*,\xi)] = J^*$.

\endproof

%

\section{SOCP Reformulations for WDRO Benchmarks for Mean-Risk Portfolio Optimization}
\label{app:wdro_reforms}

This section provides the detailed derivations for the tractable reformulations of the `1-WDRO` and `2-WDRO` benchmark models applied to the mean-CVaR portfolio optimization problem \eqref{eqn:StocasticFormMeanRiskTrue}.

\begin{lemma}[SOCP Reformulations for WDRO Benchmarks]
\label{Lemma:Reformulatiosn12WDROMeanRisk}
    Assuming the support set is $\Xi=\mathbb{R}^{d}$ and the ground metric is the Euclidean norm $\mathbf{d}(\cdot,\cdot) = \|\cdot\|_2$, the `1-WDRO` approach for problem \eqref{eqn:StocasticFormMeanRiskTrue} is equivalent to the following Second-Order Cone Program (SOCP):
    \begin{equation}\label{eqn:ReformulCVaRCasoP1_Appendix}
    \left\{
\begin{array}{cll}
{\displaystyle \inf_{w,\tau,\lambda,s}} & {\displaystyle \lambda \varepsilon +\frac{1}{n}\sum_{i=1}^{n}s_{i}} &\\
\mbox{s.t.} & {\displaystyle a_{k}\langle w,\widehat{\xi}_{i}\rangle + b_{k}\tau  \leq s_{i}  } & \forall i\in[n], k\in\{1,2\} \\
& {\displaystyle \|w\|_2|a_{k}|\leq \lambda} & \forall k\in\{1,2\} \\
& w\in\mathcal{W},\;\tau\in\mathbb{R},\;\lambda\geq 0,\;s\in\mathbb{R}^{n}.
\end{array}
\right.
\end{equation}
Furthermore, the `2-WDRO` approach for problem \eqref{eqn:StocasticFormMeanRiskTrue} is equivalent to the following SOCP:
    \begin{equation}\label{eqn:ReformulCVaRCasoP2_Appendix}
\left\{
\begin{array}{cll}
{\displaystyle \inf_{w,\tau,\lambda,s,z}} & {\displaystyle \lambda \varepsilon^{2} +\frac{1}{n}\sum_{i=1}^{n}s_{i}} & \\
\mbox{s.t.} & {\displaystyle \frac{a_{k}^{2}}{4}z + a_{k}\langle w,\widehat{\xi}_{i}\rangle + b_{k}\tau \leq s_{i}} & \forall i\in[n], k\in\{1,2\} \\
& {\displaystyle \left\| \begin{pmatrix} 2w \\ \lambda - z \end{pmatrix} \right\|_2 \leq \lambda + z} & \\
& w\in\mathcal{W},\;\tau\in\mathbb{R},\;\lambda\geq 0,\;s\in\mathbb{R}^{n}, \; z \geq 0.
\end{array}
\right.
\end{equation}
In both formulations, the constants are defined as $a_{1}=-1$, $a_{2}=-1-\frac{\rho}{\alpha}$, $b_{1}=\rho$, and $b_{2}=\rho\left(1-\frac{1}{\alpha}\right)$.
\end{lemma}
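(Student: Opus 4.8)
The plan is to observe that, for fixed $(w,\tau)$, the loss $F(w,\tau,\cdot)$ in \eqref{eq:F_portfolio} is a pointwise maximum of two affine functions of $\xi$, and then feed this structure into the strong-duality reformulation of Theorem~\ref{Thm:ReformulacionDROWInterno}, reducing the inner worst-case supremum to an elementary conjugation. Concretely, using $\max\{f_1,f_2\} = f_1 + (f_2-f_1)_+$ with $f_1(\xi) = -w^\top\xi + \rho\tau$ and $f_2(\xi) = -(1+\rho/\alpha)w^\top\xi + \rho(1-1/\alpha)\tau$, one checks that $f_2 - f_1 = \tfrac{\rho}{\alpha}(-w^\top\xi - \tau)$, so that $\max\{f_1,f_2\}$ coincides exactly with $F(w,\tau,\xi)$; thus $F(w,\tau,\xi) = \max_{k\in\{1,2\}}\{a_k\langle w,\xi\rangle + b_k\tau\}$ with $(a_1,b_1)=(-1,\rho)$ and $(a_2,b_2)=(-1-\rho/\alpha,\ \rho(1-1/\alpha))$. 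In particular $F(w,\tau,\cdot)$ is continuous, hence upper semicontinuous, so Theorem~\ref{Thm:ReformulacionDROWInterno} applies with $\Xi=\mathbb{R}^d$ and ground metric $\mathbf{d}=\|\cdot\|_2$.

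Applying that theorem, the $p$-WDRO problem becomes $\inf\ \lambda\varepsilon^p + \tfrac1n\sum_i s_i$ over $w\in\mathcal{W},\ \tau\in\mathbb{R},\ \lambda\ge0,\ s\in\mathbb{R}^n$, subject to $\sup_{\xi\in\mathbb{R}^d}\big(F(w,\tau,\xi) - \lambda\|\xi-\widehat{\xi}_i\|_2^p\big)\le s_i$ for every $i$. Interchanging the finite maximum over $k$ with the supremum over $\xi$ and substituting $u = \xi-\widehat{\xi}_i$, the $i$-th constraint splits, for each $k\in\{1,2\}$, into
$$
a_k\langle w,\widehat{\xi}_i\rangle + b_k\tau + \sup_{u\in\mathbb{R}^d}\big(a_k\langle w,u\rangle - \lambda\|u\|_2^p\big) \ \le\ s_i .
$$
The residual supremum is the convex conjugate of $\lambda\|\cdot\|_2^p$ evaluated at $a_k w$. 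For $p=1$ it equals $0$ when $|a_k|\,\|w\|_2 \le \lambda$ and $+\infty$ otherwise (the norm dual to $\|\cdot\|_2$ being $\|\cdot\|_2$), which immediately yields the constraint system $a_k\langle w,\widehat{\xi}_i\rangle + b_k\tau \le s_i$ and $|a_k|\,\|w\|_2\le\lambda$ of \eqref{eqn:ReformulCVaRCasoP1_Appendix}. For $p=2$ and $\lambda>0$ the maximizer is $u^\star = a_k w/(2\lambda)$ and the value is $a_k^2\|w\|_2^2/(4\lambda)$; the case $\lambda=0$ forces $w=0\notin\mathcal{W}$ and is infeasible, so restricting to $\lambda>0$ loses nothing.

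It then remains to express the quadratic-over-linear term $a_k^2\|w\|_2^2/(4\lambda)$ with conic data. I would introduce an auxiliary $z\ge0$ with $\|w\|_2^2\le\lambda z$, which is a rotated second-order cone constraint, equivalently written $\big\|(2w,\ \lambda-z)\big\|_2\le\lambda+z$ (expand both sides to recover $\|w\|_2^2\le\lambda z$ together with $\lambda,z\ge 0$), and replace $a_k^2\|w\|_2^2/(4\lambda)$ by $\tfrac{a_k^2}{4}z$ in the per-scenario constraints. Since $z$ enters only with nonnegative coefficients, at optimality it is driven down to $\|w\|_2^2/\lambda$, so the lift is tight and the resulting program is exactly \eqref{eqn:ReformulCVaRCasoP2_Appendix}. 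The main obstacles here are essentially bookkeeping: correctly computing the two conjugates (including the degenerate $\lambda=0$ regime) and verifying that the rotated-cone lift introduces no slack, so that the equivalence is exact rather than merely conservative; neither step is deep, but both must be carried out carefully.
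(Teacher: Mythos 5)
Your proposal is correct and follows essentially the same route as the paper's proof: rewrite $F(w,\tau,\cdot)$ as a maximum of two affine functions, apply the strong-duality reformulation of Theorem~\ref{Thm:ReformulacionDROWInterno}, interchange the supremum and the finite maximum, evaluate the resulting conjugate ($p=1$) or unconstrained quadratic maximum ($p=2$), and lift the quadratic-over-linear term via the rotated second-order cone. The only differences are minor additions on your part (explicit verification of the max-affine identity and the degenerate $\lambda=0$ case), which the paper leaves implicit.
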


\begin{proof}
We recall from Theorem \ref{Thm:ReformulacionDROWInterno} that for $p \in \{1,2\}$, the WDRO problem can be reformulated as:
\begin{equation}\label{Eqn:ReformulacionDROWManRisk_Appendix}
\left\{
\begin{array}{cll}
{\displaystyle \inf_{w,\tau,\lambda,s}} & {\displaystyle \lambda \varepsilon^{p} +\frac{1}{n}\sum_{i=1}^{n}s_{i}} &\\
\mbox{s.t.} & {\displaystyle \sup_{\xi\in\mathbb{R}^{d}}\left(F(w,\tau,\xi)-\lambda \|\xi-\widehat{\xi}_{i}\|^{p}_{2} \right) \leq s_{i}  } & \forall i \in [n] \\
& w\in\mathcal{W},\;\tau\in\mathbb{R},\;\lambda\geq 0,\;s\in\mathbb{R}^{n}.
\end{array}
\right.
\end{equation}
A key step in deriving the specific reformulations is to express the objective function $F$ from \eqref{eq:F_portfolio} in its equivalent max-affine form:
$$F(w,\tau,\xi)=\max_{k\in\left\{1,2\right\}}\left(a_{k}\langle w,\xi \rangle+b_{k}\tau\right).$$
This characterization allows the inner supremum term in the constraint of \eqref{Eqn:ReformulacionDROWManRisk_Appendix} to be rewritten by interchanging the supremum and maximum operators:
\begin{align}
    \sup_{\xi\in\mathbb{R}^{d}}\left(F(w,\tau,\xi)-\lambda \|\xi-\widehat{\xi}_{i}\|_{2}^{p} \right) &= \sup_{\xi\in\mathbb{R}^{d}}\left( \max_{k\in\left\{1,2\right\}}\left(a_{k}\langle w,\xi \rangle+b_{k}\tau\right) -\lambda \|\xi-\widehat{\xi}_{i}\|_{2}^{p} \right) \nonumber\\
    & = \max_{k\in\left\{1,2\right\}}\left( \sup_{\xi\in\mathbb{R}^{d}}\left(a_{k}\langle w,\xi \rangle+b_{k}\tau -\lambda \|\xi-\widehat{\xi}_{i}\|_{2}^{p}\right) \right). \label{Eqn:InnerSoppremumReformul_Appendix}
\end{align}
We now analyze the inner supremum for $p=1$ and $p=2$.

\textbf{Case 1: 1-WDRO ($p=1$)}
The inner supremum is an unconstrained problem whose objective function's conjugate is known. We have:
\begin{align}
    \sup_{\xi\in\mathbb{R}^{d}}\left(a_{k}\langle w,\xi \rangle - \lambda \|\xi-\widehat{\xi}_{i}\|_{2}\right) + b_k \tau &= \sup_{\delta \in \mathbb{R}^d} \left(a_k \langle w, \delta + \widehat{\xi}_i \rangle - \lambda \|\delta\|_2 \right) + b_k \tau \nonumber \\
    &= a_k \langle w, \widehat{\xi}_i \rangle + b_k \tau + \sup_{\delta \in \mathbb{R}^d} \left( \langle a_k w, \delta \rangle - \lambda \|\delta\|_2 \right). \nonumber
\end{align}
The final supremum term is the definition of the convex conjugate of the function $g(\delta) = \lambda \|\delta\|_2$, evaluated at $a_k w$. This conjugate is finite if and only if the dual norm of $a_k w$ is less than or equal to $\lambda$. The dual norm of the Euclidean norm is itself. Thus,
\[
\sup_{\delta \in \mathbb{R}^d} \left( \langle a_k w, \delta \rangle - \lambda \|\delta\|_2 \right) =
\begin{cases}
    0 & \text{if } \|a_k w\|_2 \le \lambda \\
    \infty & \text{otherwise}.
\end{cases}
\]
Since $\|a_k w\|_2 = |a_k| \|w\|_2$, the constraint $\sup_{\xi\in\mathbb{R}^{d}}\left(F(w,\tau,\xi)-\lambda \|\xi-\widehat{\xi}_{i}\|_{2} \right)\leq s_{i}$ holds if and only if:
\[
 \begin{cases}
     a_{k}\langle w,\widehat{\xi}_{i}\rangle + b_{k}\tau  \leq s_{i},  & \forall  k\in\{1,2\},\\
    |a_{k}|\|w\|_2\leq \lambda & \forall k\in\{1,2\}.
\end{cases}
\]
Substituting this back into \eqref{Eqn:ReformulacionDROWManRisk_Appendix} yields the SOCP formulation \eqref{eqn:ReformulCVaRCasoP1_Appendix}.

\textbf{Case 2: 2-WDRO ($p=2$)}
The inner supremum is an unconstrained convex quadratic maximization problem:
\[ \sup_{\xi\in\mathbb{R}^{d}}\left(a_{k}\langle w,\xi \rangle+b_{k}\tau -\lambda \|\xi-\widehat{\xi}_{i}\|_{2}^{2}\right). \]
The optimal $\xi^*$ is found by setting the gradient with respect to $\xi$ to zero: $a_k w - 2\lambda(\xi^* - \widehat{\xi}_i) = 0$, which gives $\xi^* = \widehat{\xi}_i + \frac{a_k}{2\lambda}w$. Substituting this back into the expression yields:
\[
a_k \langle w, \widehat{\xi}_i + \frac{a_k}{2\lambda}w \rangle + b_k \tau - \lambda \|\frac{a_k}{2\lambda}w\|_2^2 = a_k \langle w, \widehat{\xi}_i \rangle + b_k \tau + \frac{a_k^2 \|w\|_2^2}{4\lambda}.
\]
Therefore, the constraint from \eqref{Eqn:ReformulacionDROWManRisk_Appendix} becomes:
\[
\max_{k \in \{1,2\}} \left( \frac{a_{k}^{2}\|w\|_2^{2}}{4\lambda}+a_{k}\langle w,\widehat{\xi}_{i}\rangle + b_{k}\tau \right) \leq s_{i}.
\]
This is equivalent to the set of constraints $\frac{a_{k}^{2}\|w\|_2^{2}}{4\lambda}+a_{k}\langle w,\widehat{\xi}_{i}\rangle + b_{k}\tau  \leq s_{i}$ for $k=1,2$.
By introducing an auxiliary variable $z \ge 0$ such that $\|w\|_2^2 \le z \lambda$, the term $\frac{\|w\|_2^2}{\lambda}$ can be replaced by $z$. The hyperbolic constraint $\|w\|_2^2 \le z \lambda$ is equivalent to the rotated second-order cone constraint $\left\| \begin{pmatrix} 2w \\ \lambda - z \end{pmatrix} \right\|_2 \leq \lambda + z$. This sequence of equivalences proves that the formulation \eqref{eqn:ReformulCVaRCasoP2_Appendix} is correct.
\end{proof}


\section{MISOCP Reformulations for the Scenario-Based Mean-Risk Models}
\label{app:proof_prop_RefAdversoSAAMeanRisk}

This section provides a detailed proof of Proposition~\ref{prop:RefAdversoSAAMeanRisk_sec}, establishing valid MISOCP reformulations for the SBR-SAA-L and SBR-SAA portfolio models introduced in Section~\ref{subsec:PortfolioProblem} .

\begin{proof}[Proof of Proposition~\ref{prop:RefAdversoSAAMeanRisk_sec}]
We begin by recalling the structure of the mean-CVaR loss function
\begin{equation}\label{eq:app_F_portfolio}
F(w,\tau,\xi)
=
-w^\top \xi
+
\rho\left(
\tau + \frac{1}{\alpha}(-w^\top \xi - \tau)_+
\right),
\end{equation}
where $w\in\mathcal{W}$, $\tau\in\mathbb{R}$, and $\xi\in\mathbb{R}^d$.
Throughout, $\|\cdot\|_2$ denotes the Euclidean norm, and $\|\cdot\|_\infty$ the max-norm.

\medskip

\noindent\textbf{Step 1: Bounds and notation.}
Let
\[
K=\max\left\{
\max_{j\in[m]}\|\zeta_{j}\|_{\infty},
\max_{i\in[n]}\|\widehat{\xi}_{i}\|_{\infty}
\right\},
\]
and let $C>0$ be such that there exists an optimal solution with $|\tau|\le C$ for the problems under consideration.\footnote{Such a bound is standard for mean-CVaR formulations (cf.\ \cite{Rockafellar2000}) and can be enforced explicitly if desired without altering optimal solutions.}
Because $w\in\mathcal{W}$ implies $w\ge 0$ and $\mathbf{1}^\top w=1$, we have
\begin{equation}\label{eq:app_bound_w}
\|w\|_1 = 1,
\qquad
\|w\|_2 \le 1.
\end{equation}
Hence, for any adverse scenario $\zeta_j$ and any feasible $(w,\tau)$ with $|\tau|\le C$,
\begin{equation}\label{eq:app_bound_inner}
|w^\top \zeta_j|
\le
\|w\|_1 \|\zeta_j\|_\infty
\le
K,
\qquad
|-w^\top \zeta_j - \tau|
\le
K + C.
\end{equation}
We set
\[
M_{1}:=2(K+C)+\delta,
\qquad
M_{2}:=1+K,
\qquad
M_{3}:=1+\frac{\rho}{\alpha},
\]
for some fixed $\delta>0$.
With these choices, $M_1$ is sufficiently large so that the big-$M$ implications used below are valid for all feasible $(w,\tau)$; $M_2$ and $M_3$ will be used to decouple the gradient-norm expressions from the binary variables.

\medskip

\noindent\textbf{Step 2: Subgradient structure at adverse scenarios.}
Let $\zeta_j$ be a fixed adverse scenario.
As derived in Section~\ref{subsec:PortfolioProblem} , define
$\phi(z,\tau) = \tau + \frac{1}{\alpha}(z-\tau)_+$
and $z=-w^\top\zeta_j$.
Then
\[
F(w,\tau,\zeta_j)
=
-w^\top \zeta_j
+
\rho\,\phi(-w^\top \zeta_j,\tau),
\]
and the subdifferential of $F$ with respect to $\zeta_j$ is
\[
\partial_{\zeta} F(w,\tau,\zeta_j)
=
-w\left(1 + \rho\,\partial_z \phi(-w^\top \zeta_j,\tau)\right).
\]
Using the convention (Section~\ref{subsec:PortfolioProblem} ) that at the kink $-w^\top \zeta_j = \tau$ we select the subgradient with largest norm, we obtain the following explicit expression for the chosen (sub)gradient:
\[
\nabla_{\zeta} F(w,\tau,\zeta_j)
=
\begin{cases}
-w, & -w^\top \zeta_j < \tau,\\[0.5ex]
-w\left(1+\dfrac{\rho}{\alpha}\right), & -w^\top \zeta_j \ge \tau.
\end{cases}
\]
Consequently,
\begin{equation}\label{eq:app_grad_norm_cases}
\big\|\nabla_{\zeta} F(w,\tau,\zeta_j)\big\|_2
=
\begin{cases}
\|w\|_2, & -w^\top \zeta_j < \tau,\\[0.5ex]
\left(1+\dfrac{\rho}{\alpha}\right)\|w\|_2, & -w^\top \zeta_j \ge \tau.
\end{cases}
\end{equation}
This piecewise-constant structure in $\zeta_j$ and the fact that $\|w\|_2\le 1$ are the key ingredients behind the MISOCP reformulations.

\medskip

\noindent\textbf{Step 3: Encoding the loss-region logic via binaries.}
For each $j\in[m]$, introduce a binary variable $z_j\in\{0,1\}$ intended to distinguish the two regimes in \eqref{eq:app_grad_norm_cases}:
\[
z_j
=
\begin{cases}
1, & \text{if } -w^\top \zeta_j < \tau \quad\text{(``small-loss'' region)},\\
0, & \text{if } -w^\top \zeta_j \ge \tau - \delta \quad\text{(``large-loss'' / tail region)}.
\end{cases}
\]
The small tolerance $\delta>0$ is used to separate the two regimes in a numerically stable way; it does not affect the modeling power of the formulation.

The following pair of constraints is used in both MISOCP formulations:
\begin{align}
-w^\top \zeta_j - \tau &\le M_{1}(1-z_{j}),
\label{eq:app_logic_1}
\\
w^\top \zeta_j + \tau - \delta &\le M_{1}z_{j},
\label{eq:app_logic_2}
\end{align}
for all $j\in[m]$.

We now verify that, with $M_1$ as defined above, \eqref{eq:app_logic_1}--\eqref{eq:app_logic_2} correctly encode the intended disjunction.

\emph{(i) Case $z_j = 1$.}
Constraint~\eqref{eq:app_logic_1} becomes
\(
-w^\top \zeta_j - \tau \le 0,
\)
or equivalently
\(
-w^\top \zeta_j \le \tau.
\)
Thus, $z_j=1$ is only compatible with the small-loss region
$-w^\top \zeta_j \le \tau$.
Constraint~\eqref{eq:app_logic_2} reads
\(
w^\top \zeta_j + \tau - \delta \le M_1,
\)
which is automatically satisfied for all feasible $(w,\tau)$ because
\(
w^\top \zeta_j + \tau - \delta
\le
K + C
\le
M_1
\)
by \eqref{eq:app_bound_inner} and the definition of $M_1$.
Hence, for $z_j=1$, the only effective restriction is $-w^\top \zeta_j \le \tau$, i.e., the small-loss regime.

\emph{(ii) Case $z_j = 0$.}
Constraint~\eqref{eq:app_logic_2} becomes
\(
w^\top \zeta_j + \tau - \delta \le 0,
\)
or equivalently
\(
-w^\top \zeta_j \ge \tau - \delta.
\)
Thus, $z_j=0$ is only compatible with the large-loss (or near-threshold) region.
Constraint~\eqref{eq:app_logic_1} becomes
\(
-w^\top \zeta_j - \tau \le M_1,
\)
which is again automatically satisfied for all feasible $(w,\tau)$ by \eqref{eq:app_bound_inner} and the choice of $M_1$.
Therefore, for $z_j=0$, the effective restriction is $-w^\top \zeta_j \ge \tau - \delta$, approximating the condition $-w^\top \zeta_j \ge \tau$.
Because $\delta>0$ can be chosen arbitrarily small, this approximation can be made as tight as desired.

Thus, constraints \eqref{eq:app_logic_1}--\eqref{eq:app_logic_2} consistently encode the two regimes in \eqref{eq:app_grad_norm_cases} via the binary variable $z_j$.

\medskip

\noindent\textbf{Step 4: MISOCP reformulation for SBR-SAA-L.}

The SBR-SAA-L model is
\begin{equation}\label{eq:app_SBR_SAA_L}
\min_{w\in\mathcal{W},\,\tau\in\mathbb{R}}
\frac{1}{n}\sum_{i=1}^{n} F(w,\tau,\widehat{\xi}_i)
+
\varepsilon\,
\sum_{j=1}^{m} r_j
\big\|\nabla_{\zeta} F(w,\tau,\zeta_j)\big\|_2.
\end{equation}
Define auxiliary variables $\gamma_j\in\mathbb{R}$ for $j\in[m]$ and consider the MISOCP formulation
\begin{equation}\label{eq:app_MISOCP_L}
\left\{
\begin{array}{cll}
\displaystyle \min_{w,\tau,\gamma,z}
&
\displaystyle
\frac{1}{n}\sum_{i=1}^{n} F(w,\tau,\widehat{\xi}_i)
+
\varepsilon\, \sum_{j=1}^{m} r_j \gamma_{j}
& 
\\[0.5ex]
\text{subject to}
&
w\in\mathcal{W},\ \tau\in\mathbb{R},\ \gamma\in\mathbb{R}^{m},\ z\in\{0,1\}^m,
&
\\[0.5ex]
&
\text{\eqref{eq:app_logic_1}--\eqref{eq:app_logic_2} hold for all } j\in[m],
&
\\[0.5ex]
&
\|w\|_2 - \gamma_{j} \leq M_2 (1-z_{j}),
& j\in[m],
\\
&
\|w\|_2 \left(1 + \tfrac{\rho}{\alpha}\right) - \gamma_{j} \leq M_3 z_j,
& j\in[m].
\end{array}
\right.
\end{equation}
We show that \eqref{eq:app_SBR_SAA_L} and \eqref{eq:app_MISOCP_L} are equivalent in the sense of having the same optimal value and that optimal solutions correspond via suitable choices of $(\gamma,z)$.

\begin{enumerate}
    \item \emph{From SBR-SAA-L to MISOCP.}
Let $(w,\tau)$ be feasible for \eqref{eq:app_SBR_SAA_L}.
For each $j\in[m]$, define
\[
z_j
=
\begin{cases}
1, & \text{if } -w^\top \zeta_j < \tau,\\
0, & \text{if } -w^\top \zeta_j \ge \tau,
\end{cases}
\]
and set
\[
\gamma_j
=
\big\|\nabla_{\zeta} F(w,\tau,\zeta_j)\big\|_2
=
\begin{cases}
\|w\|_2, & z_j=1,\\[0.3ex]
\left(1+\frac{\rho}{\alpha}\right)\|w\|_2, & z_j=0,
\end{cases}
\]
according to \eqref{eq:app_grad_norm_cases}.
By Step~3, for sufficiently small $\delta>0$, the pair $(w,\tau,z)$ satisfies \eqref{eq:app_logic_1}--\eqref{eq:app_logic_2}.
We now check the remaining constraints in \eqref{eq:app_MISOCP_L}:
\begin{itemize}
    \item  If $z_j=1$ (small-loss):
\[
\|w\|_2 - \gamma_j = 0 \le 0,
\qquad
\|w\|_2\left(1+\frac{\rho}{\alpha}\right) - \gamma_j
=
\|w\|_2\frac{\rho}{\alpha} \le M_3,
\]
so both inequalities hold.

\item If $z_j=0$ (large-loss):
\[
\|w\|_2 - \gamma_j
=
\|w\|_2 - \left(1+\frac{\rho}{\alpha}\right)\|w\|_2
\le M_2,
\]
which is trivially satisfied since $M_2$ is positive and $\gamma_j \ge 0$;
and
\[
\|w\|_2 \left(1 + \tfrac{\rho}{\alpha}\right) - \gamma_{j}
=
0 \le 0.
\]
\end{itemize}

Thus $(w,\tau,\gamma,z)$ is feasible for \eqref{eq:app_MISOCP_L}, and its objective value equals that of \eqref{eq:app_SBR_SAA_L}, since
\[
\sum_{j=1}^m r_j \gamma_j
=
\sum_{j=1}^m r_j \big\|\nabla_{\zeta} F(w,\tau,\zeta_j)\big\|_2.
\]

\medskip

\item \emph{From MISOCP to SBR-SAA-L.}
Conversely, let $(w,\tau,\gamma,z)$ be feasible for \eqref{eq:app_MISOCP_L}.
Using \eqref{eq:app_logic_1}--\eqref{eq:app_logic_2}, we distinguish two cases:
\begin{itemize}
    \item  If $z_j=1$, then $-w^\top \zeta_j \le \tau$, i.e., the small-loss regime.
Constraint $\|w\|_2 - \gamma_j \le 0$ implies $\gamma_j \ge \|w\|_2$.
From \eqref{eq:app_grad_norm_cases}, we have
$\|\nabla_{\zeta} F(w,\tau,\zeta_j)\|_2 = \|w\|_2$ in this regime,
thus $\gamma_j \ge \|\nabla_{\zeta} F(w,\tau,\zeta_j)\|_2$.

\item If $z_j=0$, then $-w^\top \zeta_j \ge \tau - \delta$, approximating the large-loss regime.
Constraint
\(
\|w\|_2 \left(1 + \tfrac{\rho}{\alpha}\right) - \gamma_{j} \le 0
\)
implies
\(
\gamma_j \ge \left(1 + \tfrac{\rho}{\alpha}\right)\|w\|_2.
\)
In the large-loss regime,
$\|\nabla_{\zeta} F(w,\tau,\zeta_j)\|_2 = \left(1 + \frac{\rho}{\alpha}\right)\|w\|_2$, hence again
$\gamma_j \ge \|\nabla_{\zeta} F(w,\tau,\zeta_j)\|_2$.
\end{itemize}

Therefore, for every feasible $(w,\tau,\gamma,z)$,
\[
\sum_{j=1}^m r_j \gamma_j
\;\ge\;
\sum_{j=1}^m r_j \big\|\nabla_{\zeta} F(w,\tau,\zeta_j)\big\|_2.
\]
Since the objective in \eqref{eq:app_MISOCP_L} minimizes $\sum_j r_j\gamma_j$, at optimality we must have
$\gamma_j = \|\nabla_{\zeta} F(w,\tau,\zeta_j)\|_2$
for all $j$ (up to the negligible $\delta$-approximation at the threshold),
and hence the optimal objective value of \eqref{eq:app_MISOCP_L}
coincides with that of \eqref{eq:app_SBR_SAA_L}.
This proves the stated MISOCP reformulation for SBR-SAA-L.

\end{enumerate}

\medskip

\noindent\textbf{Step 5: MISOCP reformulation for SBR-SAA (quadratic aggregation).}

We now consider the original SBR-SAA model with quadratic aggregation:
\begin{equation}\label{eq:app_SBR_SAA_quad}
\min_{w\in\mathcal{W},\,\tau\in\mathbb{R}}
\frac{1}{n}\sum_{i=1}^{n} F(w,\tau,\widehat{\xi}_i)
+
\varepsilon\,
\left(
\sum_{j=1}^{m} r_j
\big\|\nabla_{\zeta} F(w,\tau,\zeta_j)\big\|_2^2
\right)^{1/2}.
\end{equation}
Using \eqref{eq:app_grad_norm_cases}, we have
\[
\left(
\sum_{j=1}^{m} r_j
\big\|\nabla_{\zeta} F(w,\tau,\zeta_j)\big\|_2^2
\right)^{1/2}
=
\left\|
\left(
r_1^{1/2}\big\|\nabla_{\zeta} F(w,\tau,\zeta_1)\big\|_2,
\dots,
r_m^{1/2}\big\|\nabla_{\zeta} F(w,\tau,\zeta_m)\big\|_2
\right)
\right\|_2.
\]

Introduce variables $\gamma_j\in\mathbb{R}$ for $j\in[m]$, a scalar $s\in\mathbb{R}$, and the same binaries $z_j\in\{0,1\}$ as before.
Consider the MISOCP
\begin{equation}\label{eq:app_MISOCP_quad}
\left\{
\begin{array}{cll}
\displaystyle \min_{w,\tau,\gamma,z,s}
&
\displaystyle
\frac{1}{n}\sum_{i=1}^{n} F(w,\tau,\widehat{\xi}_i)
+
\varepsilon\, s
&
\\[0.5ex]
\text{subject to}
&
w\in\mathcal{W},\ \tau\in\mathbb{R},\ \gamma\in\mathbb{R}^{m},\ z\in\{0,1\}^m,\ s\in\mathbb{R},
&
\\[0.5ex]
&
\text{\eqref{eq:app_logic_1}--\eqref{eq:app_logic_2} hold for all } j\in[m],
&
\\[0.5ex]
&
r_j^{1/2}\|w\|_2 - \gamma_{j} \leq M_2 (1-z_{j}),
& j\in[m],
\\
&
r_j^{1/2}\|w\|_2 \left(1 + \tfrac{\rho}{\alpha}\right) - \gamma_{j} \leq M_3 z_j,
& j\in[m],
\\
&
\|\gamma\|_2 \le s. &
\end{array}
\right.
\end{equation}
We claim that \eqref{eq:app_SBR_SAA_quad} and \eqref{eq:app_MISOCP_quad} are equivalent.

\begin{enumerate}
    \item \emph{From SBR-SAA to MISOCP.}
Let $(w,\tau)$ be feasible for \eqref{eq:app_SBR_SAA_quad}.
For each $j\in[m]$, define $z_j$ as in Step~4(a), and set
\[
\gamma_j
=
r_j^{1/2}\big\|\nabla_{\zeta} F(w,\tau,\zeta_j)\big\|_2.
\]
Exactly as before, constraints \eqref{eq:app_logic_1}--\eqref{eq:app_logic_2} are satisfied for sufficiently small $\delta$.
Moreover:

\begin{itemize}
    \item  If $z_j=1$ (small-loss), then $\|\nabla_{\zeta} F\|_2 = \|w\|_2$, hence
\(
\gamma_j = r_j^{1/2}\|w\|_2
\)
and
\(
r_j^{1/2}\|w\|_2 - \gamma_j = 0 \le 0,
\)
while the second inequality for $\gamma_j$ is slack.

\item If $z_j=0$ (large-loss), then
\(
\gamma_j = r_j^{1/2}\left(1+\frac{\rho}{\alpha}\right)\|w\|_2
\)
and the second inequality is tight, while the first is slack.

\end{itemize}

Thus $(w,\tau,\gamma,z)$ satisfies all constraints except the last one, for which we set
\[
s
=
\|\gamma\|_2
=
\left(
\sum_{j=1}^{m} r_j
\big\|\nabla_{\zeta} F(w,\tau,\zeta_j)\big\|_2^2
\right)^{1/2}.
\]
Then $\|\gamma\|_2 \le s$ holds with equality, and the objective of \eqref{eq:app_MISOCP_quad} coincides with that of \eqref{eq:app_SBR_SAA_quad}.

\medskip

\item \emph{From MISOCP to SBR-SAA.}
Conversely, let $(w,\tau,\gamma,z,s)$ be feasible for \eqref{eq:app_MISOCP_quad}.
As in Step~4(b), the logic constraints imply:

\begin{itemize}
    \item  If $z_j=1$ (small-loss region), then $-w^\top \zeta_j \le \tau$, hence $\|\nabla_{\zeta} F(w,\tau,\zeta_j)\|_2 = \|w\|_2$, and
\[
r_j^{1/2}\|w\|_2 - \gamma_j \le 0
\quad\Rightarrow\quad
\gamma_j \ge r_j^{1/2}\|w\|_2 = r_j^{1/2}\|\nabla_{\zeta} F(w,\tau,\zeta_j)\|_2.
\]

\item  If $z_j=0$ (large-loss region), then $-w^\top \zeta_j \gtrsim \tau$, and
\[
r_j^{1/2}\|w\|_2 \left(1 + \tfrac{\rho}{\alpha}\right) - \gamma_j \le 0
\quad\Rightarrow\quad
\gamma_j
\ge
r_j^{1/2}\left(1 + \tfrac{\rho}{\alpha}\right)\|w\|_2
=
r_j^{1/2}\|\nabla_{\zeta} F(w,\tau,\zeta_j)\|_2.
\]

\end{itemize}

Therefore, for all $j$,
\[
\gamma_j
\ge
r_j^{1/2}\big\|\nabla_{\zeta} F(w,\tau,\zeta_j)\big\|_2,
\]
and hence
\[
\|\gamma\|_2
\ge
\left(
\sum_{j=1}^{m} r_j
\big\|\nabla_{\zeta} F(w,\tau,\zeta_j)\big\|_2^2
\right)^{1/2}.
\]
Since $\|\gamma\|_2 \le s$ by constraint, we obtain
\[
s
\ge
\left(
\sum_{j=1}^{m} r_j
\big\|\nabla_{\zeta} F(w,\tau,\zeta_j)\big\|_2^2
\right)^{1/2}.
\]
Thus, the MISOCP objective
\(
\frac{1}{n}\sum_i F(w,\tau,\widehat{\xi}_i) + \varepsilon s
\)
is an upper bound on the SBR-SAA objective \eqref{eq:app_SBR_SAA_quad} for the same $(w,\tau)$.
At optimality, any excess in $\gamma$ or $s$ would increase the objective and can be reduced while maintaining feasibility, so necessarily
\[
s
=
\left(
\sum_{j=1}^{m} r_j
\big\|\nabla_{\zeta} F(w,\tau,\zeta_j)\big\|_2^2
\right)^{1/2},
\quad
\gamma_j
=
r_j^{1/2}\big\|\nabla_{\zeta} F(w,\tau,\zeta_j)\big\|_2,
\]
for all $j$.
Hence, the optimal values of \eqref{eq:app_SBR_SAA_quad} and \eqref{eq:app_MISOCP_quad} coincide.

\end{enumerate}

\medskip

Combining Steps~4 and 5, we conclude that the formulations stated in Proposition~\ref{prop:RefAdversoSAAMeanRisk_sec} are valid MISOCP reformulations of the SBR-SAA-L and SBR-SAA portfolio problems, respectively.
\end{proof}

\end{document}